\newcommand{\bbZ}{\mathbb{Z}}
\newcommand{\bbC}{\mathbb{C}}
\newcommand{\bbH}{\mathbb{H}}
\newcommand{\bbR}{\mathbb{R}}
\newcommand{\bemone}{\left(\begin{array}{c}}
\newcommand{\bemtwo}{\left(\begin{array}{cc}}
\newcommand{\bemthree}{\left(\begin{array}{ccc}}
\newcommand{\bemfour}{\left(\begin{array}{cccc}}
\newcommand{\bemfive}{\left(\begin{array}{ccccc}}
\newcommand{\bemsix}{\left(\begin{array}{cccccc}}
\newcommand{\bemseven}{\left(\begin{array}{ccccccc}}
\newcommand{\enm}{\end{array}\right)}
\newcommand{\re}{\mbox{Re}}
\newcommand{\doublesum}{\mathop{\sum\sum}}
\newcommand{\psum}{\mathop{{\sum}'}}
\newcommand{\lbar}{\left|}
\newcommand{\rbar}{\right|}
\newcommand{\lbra}{\left\{}
\newcommand{\rbra}{\right\}}
\newcommand{\lp}{\left(}
\newcommand{\rp}{\right)}
\newcommand{\sltz}{SL_2(\bbZ)}
\newcommand{\wjac}{W_{\mbox{\tiny{Jac}}}}
\newcommand{\quo}{SL_n(\bbZ)\backslash X_n}
\newcommand{\beq}{\begin{eqnarray*}}
\newcommand{\eeq}{\end{eqnarray*}}
\newcommand{\be}{\begin{eqnarray}}
\newcommand{\ee}{\end{eqnarray}}
\theoremstyle{plain}
 \newtheorem{thm}{Theorem}[section]
 \newtheorem{prop}{Proposition}[section]
 \newtheorem{lem}{Lemma}[section]
 \newtheorem{cor}{Corollary}[section]
\theoremstyle{definition}
 \newtheorem{dfn}{Definition}[section]
\theoremstyle{remark}
 \newtheorem{rem}{Remark}[section]
 \numberwithin{equation}{section}
\DeclareRobustCommand\widecheck[1]{{\mathpalette\@widecheck{#1}}}
\def\@widecheck#1#2{%
    \setbox\z@\hbox{\m@th$#1#2$}%
    \setbox\tw@\hbox{\m@th$#1%
       \widehat{%
          \vrule\@width\z@\@height\ht\z@
          \vrule\@height\z@\@width\wd\z@}$}%
    \dp\tw@-\ht\z@
    \@tempdima\ht\z@ \advance\@tempdima2\ht\tw@ \divide\@tempdima\thr@@
    \setbox\tw@\hbox{%
       \raise\@tempdima\hbox{\scalebox{1}[-1]{\lower\@tempdima\box
\tw@}}}%
    {\ooalign{\box\tw@ \cr \box\z@}}}
\begin{document}

\author{Liyang Zhang}
\address{10 Hillhouse Ave. New Haven, CT 06511} \email{liyang.zhang@yale.edu}

\title{Quantum Unique Ergodicity of Degenerate Eisenstein Series on $GL(n)$}
 
\thanks{The author was supported by the following grants from Alex Kontorovich: NSF CAREER grant DMS-1254788 and DMS-1455705}

\keywords{Quantum Unique Ergodicity, $GL(n)$ Eisenstein Series, Incomplete Eisenstein Series, }

\begin{abstract}  
We prove quantum unique ergodicity for a subspace of the continuous spectrum spanned by the degenerate Eisenstein Series on $GL(n)$.
\end{abstract}

\setcounter{tocdepth}{2}  \maketitle 

\tableofcontents

\section{Introduction}

\subsection{Introduction}
In the classical setting, the evolution of a dynamical system $(X,\mu,T)$ can be described by the geodesic flow:
\beq
g_t: T^*X\rightarrow T^*X
\eeq
where $g_t {\bf x}_0={\bf x}_t$ is given by the Hamiltonian. We say the system is ergodic if for every $f\in L_\mu^2$ and for almost every starting position ${\bf x}_0$ the time average becomes the spatial average:
\beq
\lim_{S\rightarrow\infty} \frac{1}{S}\int_0^S f(g_t({\bf x}_0))dt = \frac{1}{\mu(X)}\int_{X}f({\bf x})d{\bf x}.
\eeq
A classic example of an ergodic dynamical system is the Bunimovich Stadium \cite{Bu}\cite{BS}.

In the quantum setting, the evolution of a system is governed by the Schr\"odinger equation:
\beq
-\frac{\hbar^2}{2m}\Delta \psi_n = \lambda_n\psi_n
\eeq
where $\Delta$ is the Laplacian. A system is quantum uniquely ergodic if in the semiclassical limit ($\hbar\rightarrow 0$), each individual eigenfunction $|\psi_n|^2$ become equi-distributed.

Let $(M,\mu)$ be a Riemannian manifold with laplacian $\Delta$ and let $\psi_n$ a set of orthonormal eigenfunctions of $\Delta$ with corresponding eigenvalues $0\le\lambda_1\le\lambda_2\le\ldots$. Schnirelman \cite{Sh}, Colin de Verdi\'ere \cite{Co} and Zelditch \cite{Ze1} proved quantum ergodicity: eigenfunctions of $\Delta$ become equi-distributed with respect to the volume measure in the high energy limit along a subsequence $n_k$ of density one. Zelditch \cite{Ze2} extended this result to the modular surface, which is not compact. Hejhal-Rackner \cite{HR}, Rudnick-Sarnak \cite{RS} conjectured that there are no exceptional subsequences, that is, there is quantum unique ergodicity. The arithmetic version of the conjecture was famously proved by Lindenstrauss \cite{Li} and Soundararajan \cite{S}. For higher rank, Silberman and Venkatesh formualted and proved some cases of quantum unique ergodicity for compact locally symmetric spaces in \cite{SV1} and \cite{SV2}.

For non-compact quotients, the Laplacian $\Delta$ has continuous spectrum and one can formulate QUE for such. For example, on the modular surface the continuous spectrum is spanned by the Eisenstein series
\beq
E(z,s) = \sum_{\gamma\in\Gamma_\infty\backslash SL(2,\bbZ)} \Im(\gamma z)^s.
\eeq
Arithmetic quantum unique ergodicity of Eisenstein series on $GL(2)$ was first formulated and proved by Luo and Sarnak in \cite{LS}. The unitary Eisenstein series $E(z,\frac{1}{2}+it)$ is an eigenfunction of the hyperbolic Laplacian with eigenvalue $\frac{1}{4}+t^2$. Define $d \nu_t = \lbar E\lp z,\frac{1}{2}+it\rp\rbar^2 \frac{dxdy}{y^2}$. Then for compact Jordan measurable sets $A,B$ in $SL(2,\bbZ)\backslash SL(2,\bbR) /SO(2,\bbR)$, Luo and Sarnak showed 

\beq
\lim_{t\rightarrow\infty }\frac{\nu_t(A)}{\nu_t(B)} = \frac{\mbox{Vol}(A)}{\mbox{Vol}(B)}
\eeq
which represents arithmetic quantum unique ergodicity for the continuous spectrum. More precisely, they showed
\beq
\nu_t(A)= \frac{12}{\pi}\mbox{Vol}(A) \log t + O(1)
\eeq
as $t\rightarrow \infty$. Note that ergodic methods typically do not give rates. We also remark that the constant $\frac{12}{\pi}$ is different from that in \cite{LS} because of different normalization. 

In this paper, we study the analog of Luo and Sarnak's result on a subspace of the $GL(n)$ continuous spectrum spanned by the degenerate Eisenstein series induced from the maximal parabolic subgroup with the constant function $E_{n-1,1}\lp z,s,1\rp$. Eisenstein series are eigenfunctions of Casimir operators and on $GL(n)$, the analog of the hyperbolic Laplacian is $\Delta_{2,n}$ (see section 2) where
\beq
\lp\Delta_{2,n} +\lp\frac{n^2-n}{8}+\frac{n^2-n}{2} t^2\rp \rp E_{n-1,1}\lp z,\frac{1}{2}+it,1\rp =0.
\eeq 
Define $\mu_{n,t} = \lbar E_{n-1,1}(z,\frac{1}{2}+it,1)\rbar^2d^*z$ where $d^*z$ is the Haar measure. We will show the following:

\begin{thm}\label{thm1} Let $A,B$ be compact Jordan measurable subsets of $SL(n,\bbZ)\backslash SL(n,\bbR) /SO(n,\bbR)$. Then for $n\ge2$,
\beq
\lim_{t\rightarrow\infty }\frac{\mu_{n,t}(A)}{\mu_{n,t}(B)} = \frac{\mbox{Vol}(A)}{\mbox{Vol}(B)}.
\eeq
More precisely,
\beq
\mu_{n,t}(A) =  \frac{2}{\xi(n)} \mbox{Vol}(A) \log t + O_A(1)
\eeq
as $t\rightarrow \infty$.
\end{thm}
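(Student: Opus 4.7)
The plan is to follow the Luo--Sarnak template in higher rank. First, by inner/outer approximation, it suffices to replace $\chi_A$ by a smooth compactly supported test function $\psi$ on $SL_n(\bbZ)\backslash SL_n(\bbR)/SO_n(\bbR)$, and then decompose $\psi$ along the Langlands spectral decomposition. The contribution of the constant function gives immediately $\mbox{Vol}(A)\cdot\|E_{n-1,1}\|_{\text{trunc}}^2$; the asymptotic $\frac{2}{\xi(n)}\mbox{Vol}(A)\log t$ must come from evaluating this ``mass'' piece. The non-constant cuspidal, residual, and orthogonal continuous parts must all contribute $o(\log t)$, and the continuous part (spanned by incomplete Eisenstein series induced from all standard parabolics) will furnish the main term via unfolding.

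The central computation is therefore $\mu_{n,t}(\Psi_h)$ for an incomplete Eisenstein series $\Psi_h(z)=\sum_{\gamma\in P\backslash\Gamma}h(\gamma z)$, with $P$ running over the standard parabolics and $h$ a smooth, rapidly decaying function on $P\cap\Gamma\backslash G/K$. By the standard unfolding trick,
\beq
\mu_{n,t}(\Psi_h)=\int_{P\cap\Gamma\backslash G/K}\lbar E_{n-1,1}(z,\tfrac12+it,1)\rbar^{2}h(z)\,d^*z.
\eeq
Parameterizing by the Levi coordinate $y$ and inserting Mellin inversion $h(y)=\frac{1}{2\pi i}\int_{(\sigma)}\tilde h(s)\,y^{-s}\,ds$ with $\sigma$ sufficiently large, the inner integral reduces to a Rankin--Selberg type zeta integral
$\mathcal Z(s,t):=\int \lbar E_{n-1,1}(z,\tfrac12+it,1)\rbar^{2} y^{s}\,d^*z$.
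Using the explicit constant term (and more generally the Fourier--Whittaker expansion) of the degenerate Eisenstein series induced from the maximal parabolic $P_{n-1,1}$ with the trivial representation, one can express $\mathcal Z(s,t)$ as a ratio of completed Riemann zeta values at shifted arguments --- the denominator carrying $\xi(n(\tfrac12+it))\xi(n(\tfrac12-it))$, the numerator involving combinations of $\xi(s)$, $\xi(s\pm 2nit)$ and $\xi(ns)$ factors --- together with an explicit gamma/archimedean piece.

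Third, one shifts the $s$-contour from $\re(s)=\sigma$ to the left of the main poles, picking up residues. The target asymptotic $\frac{2}{\xi(n)}\log t$ forces the main term to come from a double pole formed where the test function's Mellin transform $\tilde h(s)$ (through $\langle h,1\rangle$) meets the $s=1$ pole of $\xi(s)$ in $\mathcal Z(s,t)$; the $\log t$ arises because the $t$-dependent factor $\xi(n(\tfrac12+it))\xi(n(\tfrac12-it))$ in the denominator contributes $\log t$ growth by the standard convexity/Stirling estimate as $t\to\infty$. Matching the residue against the normalization of $d^*z$ and using $2/\xi(2)=12/\pi$ as a sanity check against Luo--Sarnak will pin down the constant $2/\xi(n)$. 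The remaining shifted integral, together with all other residues, is bounded uniformly in $t$ by standard vertical-line bounds on $\xi$ and on $\tilde h$, yielding the error $O_A(1)$.

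Finally, for cusp forms $\phi$, one has $\int |E_{n-1,1}|^2\phi\,d^*z = o(\log t)$: this follows from bounding the regularized triple product via the Rankin--Selberg convolution $L(s,\phi\times|E_{n-1,1}|^2)$ on the critical line, where only polynomial-in-$t$ growth is needed (convexity suffices). The residual spectrum is handled identically. The main obstacle I expect is the third step: correctly identifying the precise pole structure of $\mathcal{Z}(s,t)$ in higher rank --- in particular confirming that exactly one double pole produces the $\log t$ coefficient $2/\xi(n)$, and bookkeeping the interaction between the numerator $\xi$-factors with shifted arguments and the $t$-dependent denominator so that lower-order residues genuinely remain $O(1)$ uniformly in $t$.
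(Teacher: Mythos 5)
Your overall template---spectral decomposition, unfolding against incomplete Eisenstein series, Mellin inversion, a contour shift whose double pole produces the $\log t$---is the one the paper follows, and you correctly locate the main term in the minimal-parabolic piece. But there is a genuine gap in how you propose to dispose of everything else. You plan to bound the cuspidal contribution $\int \phi(z)\,|E_{(n-1,1)}(z,\frac12+it,1)|^2\,d^*z$ (and the other continuous pieces) by a Rankin--Selberg $L$-value on the critical line, asserting that ``only polynomial-in-$t$ growth is needed (convexity suffices).'' That cannot work as stated: these terms must be $o(\log t)$, and a bound of the shape $t^{\epsilon}$---let alone polynomial growth---does not give that (already in Luo--Sarnak's $GL(2)$ case the Gamma factors supply only $t^{-1/2}$ against a convexity bound of $t^{1/2+\epsilon}$, which is why they invoke subconvexity). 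Worse, for $n\ge 3$ the object $L(s,\phi\times|E_{(n-1,1)}|^2)$ is not a standard $L$-function; controlling it is exactly the shifted-convolution difficulty that Remark \ref{rem1}-style comments in the paper flag for \emph{non-degenerate} Eisenstein series. What actually saves the day, and what your outline misses, is the structural input of Theorem \ref{prop3}: $E_{(n-1,1)}(z,s,1)$ has a purely \emph{degenerate} Fourier--Whittaker expansion, supported on characters of the form $(0,\dots,0,m_k,0,\dots,0)$. Consequently the pairing with any Maass cusp form vanishes identically, and by Proposition \ref{constant} and Corollary \ref{coezero} the pairing with incomplete Eisenstein series attached to partitions with $n_r\ge 2$ is exactly zero. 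The only place an $L$-function bound is genuinely needed is the partition $(2,1,\dots,1)$, which after the reduction of Proposition \ref{reduction} collapses to a $GL(2)$ integral where the archimedean factors contribute $t^{-1}$ and convexity then yields $O(t^{-1/2+\epsilon})$ (Theorem \ref{prop6}).

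A secondary inaccuracy: the $\log t$ does not come from ``the factor $\xi(n(\frac12+it))\xi(n(\frac12-it))$ in the denominator contributing $\log t$ growth,'' nor from $\tilde h$ meeting the pole of $\xi(s)$ at $s=1$ (the Mellin transform of a compactly supported test function is entire). In the paper the double pole is that of $\xi(ns_{n-1}-1)^2$ at $s_{n-1}=2/n$; at that point the $t$-dependent $\xi$-factors in numerator and denominator cancel, and the residue of the double pole produces the logarithmic derivatives $\frac{\Gamma'}{\Gamma}(\frac n4\pm\frac{int}{2})$, which are $\sim\log t$ by Stirling. You flagged the pole bookkeeping as your main worry, and that is indeed where your outline needs correction; but the more serious issue is the first one.
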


\begin{rem} This theorem will serve as a stepping stone towards proving quantum unique ergodicity of higher rank non-degenerate Eisenstein series. QUE of higher rank non-degenerate Eisenstein series is a much more difficult problem as it is related to the shifted convolution problem involving generalized divisor functions and Fourier coefficients of higher rank Maass cusp forms. 
\end{rem}

\subsection{Strategy for proof of Theorem \texorpdfstring{\ref{thm1}}{Lg}}

We use the spectral decomposition of $\frak{L}^2(SL_n(\bbZ)\backslash X_n)$ to divide the proof of Theorem \ref{thm1} into evaluating the following three type of integrals (see Theorem \ref{prop41}):
\beq
&&\int_{SL_n(\bbZ)\backslash X_n}\phi(z)\lbar E_{n-1,1}\lp z,\frac{1}{2}+it,1\rp\rbar^2d^*z,\\
&&\int_{SL_n(\bbZ)\backslash X_n}E_{1,\ldots,1}(z,\eta) \lbar E_{n-1,1}\lp z,\frac{1}{2}+it,1\rp\rbar^2d^*z,\\
&&\int_{SL_n(\bbZ)\backslash X_n}E_{n_1,\ldots,n_r}(z,\psi,u_1,\ldots,u_r) \lbar E_{n-1,1}\lp z,\frac{1}{2}+it,1\rp\rbar^2d^*z.
\eeq
Here $\phi(z)$ is a $GL(n)$ cusp form, $E_{1,\ldots,1}(z,\eta)$ is an incomplete Eisenstein series associated to the minimal parabolic subgroup, and $E_{n_1,\ldots,n_r}(z,\psi,u_1,\ldots,u_r)$ is an incomplete Eisenstein series associated to the $(n_1,\ldots,n_r)$ parabolic subgroup induced from cusp forms $u_j$ on $GL(n_j)$. These integrals will be evaluated in sections 4 and 5.

\textbf{Acknowledgement.} I would like to thank Alex Kontorovich for suggesting the problem, many enlightening discussions, and careful readings of the various versions of this paper. I also would like to thank Valentin Blomer for helpful comments.
\newpage
\section{Automorphic Forms on \texorpdfstring{$GL(n)$}{Lg}}
 
 Here we give a brief introduction to automorphic forms on $GL(n)$, make some preliminary computations and set the notation for the rest of the paper.
 
 \subsection{Automorphic Functions, Automorphic Forms, and Fourier Expansion}

\vspace{10mm}
 
We are working over the generalized upper half space $ X_n :\ = GL_n(\bbR)\slash \lp O_n(\bbR) \cdot \bbR^*\rp$. By the Iwasawa decomposition (see section 1.2 of \cite{Go}), $X_n$ consists of matrices of the form $z= x\cdot y$ with
 
\begin{eqnarray}
x = \bemfive 1 & x_{1,2}& x_{1,3}&\cdots& x_{1,n}\\  &1&x_{2,3}&\cdots & x_{2,n} \\ &&\ddots&&\vdots\\ &&&1& x_{n-1,n}\\&&&&1\enm, \ \ \ y=   \bemfive   y_1y_2\cdots y_{n-1} &&&&\\  & y_1y_2\cdots y_{n-2}&&&\\ &&\ddots&&\\ &&&y_1&\\&&&&1\enm. \label{coor}
\end{eqnarray}

 where $x_{i,j}\in\bbR$ and $y_k>0$. We say a function $f$ is automorphic if 
 \beq
 f(\gamma z) = f(z)\hspace{10mm} \forall \gamma\in SL_n(\bbZ),\forall z\in X_n.
 \eeq

The space $X_n$ is equipped with a left $GL_n(\bbR)$-invariant measure $d^*z$ on $X_n$ given explicitly by
\beq
d^*z = c_n \ d^*x \ d^*y
\eeq
where
\beq
c_n = n^{-1} \prod_{\ell=2}^n\xi(\ell)^{-1},\hspace{5mm} d^*x = \prod_{1\le i<j\le n}d x_{i,j},\hspace{5mm} \prod_{k=1}^{n-1}d_k^{-k(n-k)}\frac{dy_k}{y_k}.
\eeq
Throughout this paper, we define the completed zeta-function $\xi(s) $ to be $\pi^{-\frac{s}{2}}\Gamma\lp\frac{s}{2}\rp\zeta(s)$. The normalization for this measure is chosen so that
\beq
\int_{\quo} 1 \ d^*z = 1.
\eeq

Our interest lies in the Hilbert space $\frak{L}(\quo)$ where the inner product is defined by
\beq
\langle f,g\rangle = \int_{\quo} f(z)\overline{g}(z)\ d^*z.
\eeq
For a smooth function $f$ in $\frak{L}(\quo)$, standard Fourier theory with the automorphy of $f$ give rise to a Fourier expansion of $f$. 
\begin{thm}(See section 5.3 of \cite{Go}.)
Let $P_{n-1,1}(\bbZ) = \lbra \bemfive &&*&&\\ 0&0&\cdots&0&0\enm\in SL_{n}(\bbZ)\rbra$. For $f\in \frak{L}(\quo)$ a smooth function, we have for all $z\in\quo$
\begin{eqnarray}
f(z) =\left. \sum_{m_1=0}^\infty\sum_{m_2=0}^\infty\psum_{\gamma_2\in P_{1,1}\backslash SL_2(\bbZ)}\cdots\sum_{m_{n-1}=0}^\infty\psum_{\gamma_{n-1}\in P_{n-2,1}\backslash SL_{n-1}(\bbZ)}\hat{f}_{(m_1,\ldots,m_{n-1})}\lp z\rp\rbar_{\gamma_2\cdots\gamma_{n-1}}. \label{autofourier}
\end{eqnarray}
where the slash operator is defined by \beq
\left.g(z)\rbar_{\gamma_2\cdots\gamma_{n-1}} = g\lp \bemtwo \gamma_2&\\ &I_{n-2}\enm\cdot\cdots\cdot\bemtwo \gamma_{n-1}&\\ &1\enm\cdot z\rp,
\eeq
and for each $2\le h\le n-1$, the primed summation over each $P_{h-1,1}\backslash SL_{h}(\bbZ)$ is summed only if $m_{h}\neq0$.
Let $U_n(\bbZ)$ $\lp U_n(\bbR)\rp$ denote the group of $n\times n$ upper triangular matrices with integer (real) entries and $1$'s on the diagonal \beq
\hat{f}_{(m_1,\ldots,m_{n-1})}(z) = \int_0^1\cdots \int_0^1 \phi(u\cdot z) e(-m_1u_{1,2}-m_2u_{2,3} -\cdots - m_{n-1}u_{n-1,n})\prod _{1\le i<j\le n} d{u_{i,j}}
\eeq
and 
\beq
u = \bemfive 1 & u_{1,2}& u_{1,3}&\cdots& u_{1,n}\\  &1&u_{2,3}&\cdots & u_{2,n} \\ &&\ddots&&\vdots\\ &&&1& u_{n-1,n}\\&&&&1\enm \in U_n(\bbR).
\eeq
\end{thm}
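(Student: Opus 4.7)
The plan is to prove the Fourier expansion by induction on $n$, combining classical Fourier series in the super-diagonal unipotent variables with a Piatetski--Shapiro style orbit-unfolding of the nonzero modes via the mirabolic subgroups of $SL_{n-1}(\bbZ)$. The base case $n = 2$ is immediate: $f$ is $1$-periodic in $x_{1,2}$ under the translation generator of $\sltz$, so the classical Fourier series $f(z) = \sum_{m_1 \in \bbZ} \hat f_{m_1}(z)$ matches the stated formula (the primed coset sum is vacuous).

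For the inductive step, assuming the theorem for $GL(n-1)$, I would Fourier-expand $f$ jointly in the last-column entries $x_{1,n}, \ldots, x_{n-1,n}$, which are $1$-periodic under $U_n(\bbZ)$, writing
\beq
f(z) = \sum_{M \in \bbZ^{n-1}} f_M(z).
\eeq
The copy of $SL_{n-1}(\bbZ)$ embedded in the upper-left $(n-1)\times(n-1)$ block (acting trivially on the last row and column) fixes the $M = 0$ mode and acts naturally on $\bbZ^{n-1}\setminus\{0\}$; transitivity of this action on primitive vectors shows the orbits are indexed by positive integers $m_{n-1} \geq 1$, with representative $(0, \ldots, 0, m_{n-1})$ whose stabilizer is $P_{n-2,1}(\bbZ)$. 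Unfolding accordingly produces the primed sum $\sum_{m_{n-1} \geq 1} \sum_{\gamma_{n-1} \in P_{n-2,1}\backslash SL_{n-1}(\bbZ)}$ applied to the Fourier coefficient at that representative. The remaining $M = 0$ piece $f_0$ is a smooth $SL_{n-1}(\bbZ)$-automorphic function on the upper-left $X_{n-1}$-factor, and applying the inductive hypothesis to $f_0$ — as well as to the coefficient attached to the representative in the $m_{n-1} \neq 0$ contribution — supplies the remaining summations over $m_1, \ldots, m_{n-2}$ and the inner cosets $\gamma_2, \ldots, \gamma_{n-2}$. Reassembling the two pieces yields the claimed expansion.

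The main obstacle is the bookkeeping when interleaving the inductive expansion on the $GL(n-1)$ block with the mirabolic unfolding at level $n$. One must confirm that the Jacquet--Whittaker style Fourier coefficient $\hat f_{(m_1, \ldots, m_{n-1})}$ — which integrates over \emph{all} $u_{i,j}$ but carries a character depending only on the super-diagonal entries $u_{i,i+1}$ — assembles correctly from the two stages, and that the primed-summation convention (a coset sum present exactly when $m_h \neq 0$) emerges naturally from the orbit structure of $SL_h(\bbZ)$ at each level. Since this is precisely Proposition~5.3 of \cite{Go}, the real labor lies in verifying that the Fourier-integral normalizations and coset choices align consistently across the layers of the induction.
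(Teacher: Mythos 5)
Your outline is correct and is essentially the standard proof of this result: the paper itself gives no argument, simply citing section 5.3 of \cite{Go}, whose proof is precisely the induction you describe (abelian Fourier expansion in the last-column entries, followed by unfolding the nonzero modes over $SL_{n-1}(\bbZ)$-orbits on $\bbZ^{n-1}\setminus\{0\}$ with mirabolic stabilizer, and applying the inductive hypothesis to the zero mode and to the orbit representatives). The only point requiring real care is the one you already flag — matching the stabilizer of the character attached to $(0,\ldots,0,m_{n-1})$ with the coset space $P_{n-2,1}(\bbZ)\backslash SL_{n-1}(\bbZ)$ and the resulting range of $m_{n-1}$ — which is bookkeeping, not a gap.
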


Our goal is to describe a spectral decomposition of the space $\frak{L}(\quo)$. It is natural to turn to the theory of differential operators. Let $\frak{D}_n$ be the center of the universal enveloping algebra of the Lie algebra $\frak{gl}_n(\bbR)$. Let $E_{i,j}\in \frak{gl}_n(\bbR)$ denote the matrix with a $1$ at the $(i,j)$ entry and zeros elsewhere. We define the differential operator $D_{i,j}$ by
\beq
(D_{i,j}f)(g) =\left. \frac{\partial}{\partial t}f\lp g\cdot \exp(tE_{i,j})\rp\rbar_{t=0}
\eeq
for a smooth function $f:GL_n(\bbR)\mapsto\bbC$.
\begin{prop}(See section 2.3 of \cite{Go}.)
For $n\ge 2$ and $2\le m\le n$, the differential operators (Casimir operators)
\beq
\Delta_{m,n}:= \sum_{i_1=1}^n\sum_{i_2=1}^n\cdots\sum_{i_m=1}^n D_{i_1,i_2}\circ D_{i_2,i_3}\circ\cdots\circ D_{i_m,i_1}
\eeq
generate $\frak{D}_n$ as a polynomial algebra of rank $n-1$. 
\end{prop}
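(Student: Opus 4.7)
The plan is to identify $\Delta_{m,n}$ with a classical family of central elements in $U(\mathfrak{gl}_n)$ and then transport the problem through the Harish-Chandra isomorphism to the well-understood structure of symmetric polynomials. First I would note that, since $(D_{i,j}f)(g) = \partial_t f(g\exp(tE_{i,j}))|_{t=0}$, the operators $D_{i,j}$ are the left-invariant vector fields on $GL_n(\bbR)$ corresponding to the matrix units $E_{i,j}\in\mathfrak{gl}_n(\bbR)$, so under the standard identification of $\mathfrak{D}_n$ with the center $Z(U(\mathfrak{gl}_n))$ the operator $\Delta_{m,n}$ becomes $\sum_{i_1,\ldots,i_m} E_{i_1 i_2}E_{i_2 i_3}\cdots E_{i_m i_1} = \mathrm{tr}(E^m)$, where $E = (E_{ij})$ denotes the matrix of generators. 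These are the Gelfand invariants; their centrality follows from a direct PBW telescoping of $[E_{k\ell},\mathrm{tr}(E^m)]=0$.

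Next I would apply the Harish-Chandra isomorphism $\gamma: Z(U(\mathfrak{gl}_n)) \xrightarrow{\sim} \bbC[x_1,\ldots,x_n]^{S_n}$, where $x_i$ are coordinates on the diagonal Cartan subalgebra and the Weyl group is $W = S_n$. PBW-ordering $\mathrm{tr}(E^m)$ so that diagonal entries $E_{i,i}$ land on the right and collecting the standard $\rho$-shift from the off-diagonal commutators yields
\begin{equation*}
\gamma(\Delta_{m,n}) \;=\; \sum_{i=1}^n (x_i + \rho_i)^m \;+\; (\text{symmetric polynomial of degree} < m),
\end{equation*}
so the leading symmetric piece of $\gamma(\Delta_{m,n})$ is the power-sum polynomial $p_m = \sum_i x_i^m$. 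By Newton's identities, $p_1,\ldots,p_n$ freely generate $\bbC[x_1,\ldots,x_n]^{S_n}$ as a polynomial algebra, and a triangular change of variables transfers algebraic independence to $\gamma(\Delta_{1,n}),\ldots,\gamma(\Delta_{n,n})$.

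This produces $n$ algebraically independent generators of $Z(U(\mathfrak{gl}_n))$; the rank drops to $n-1$ in the present setting because we quotient by $\bbR^*$ in the definition of $X_n$. The central $\bbR^*$ is infinitesimally generated by $\sum_i D_{i,i} = \Delta_{1,n}$, so this operator annihilates every function on $\quo$, identifying $\mathfrak{D}_n$ with $Z(U(\mathfrak{gl}_n))/(\Delta_{1,n})$ and leaving $\Delta_{2,n},\ldots,\Delta_{n,n}$ as the required $n-1$ algebraically independent generators. The main obstacle is the explicit Harish-Chandra computation: the leading-order reduction to power sums is immediate, but carefully tracking the $\rho$-shift and the lower-degree tail through the PBW reordering requires some bookkeeping; once that is in hand, the conclusion is purely formal via the fundamental theorem of symmetric polynomials.
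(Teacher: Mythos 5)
Your sketch is correct and is essentially the standard argument; note that the paper itself offers no proof of this proposition, simply quoting it from Section 2.3 of Goldfeld, where the same route (Gelfand invariants $\mathrm{tr}(E^m)$, centrality, Harish-Chandra isomorphism onto $\bbC[x_1,\ldots,x_n]^{S_n}$, power sums and Newton's identities) underlies the cited result. The only point worth tightening is the last step: after killing $\Delta_{1,n}$ you should check that algebraic independence of $\Delta_{2,n},\ldots,\Delta_{n,n}$ survives the quotient, which follows because $\bbC[x_1,\ldots,x_n]^{S_n}/(p_1)\cong\bbC[e_2,\ldots,e_n]$ and the triangular relation between power sums and elementary symmetric polynomials persists on the hyperplane $p_1=0$ (equivalently, because the eigenvalues $\lambda_{m,n}(\nu)$ on $I_\nu$ are algebraically independent functions of $\nu\in\bbC^{n-1}$).
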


For $n\ge2$ and $\nu=(\nu_1,\ldots,\nu_{n-1})\in\bbC^{n-1}$, the $I$-function defined by 
\beq
I_{\nu}(z) = \prod_{i=1}^{n-1}\prod_{j=1}^{n-1} y_i^{b_{i,j}\nu_j},
\eeq
where
\beq
b_{i,j}  = \lbra \begin{array}{ll} ij& \mbox{if }i+j\le n,\\ (n-i)(n-j) & \mbox{if }i+j\ge n \end{array}\right.
\eeq
 is an eigenfunction of all differential operators in $\frak{D}_n$. Let $\lambda_{m,n}$ be the corresponding eigenvalues, i.e.
 \beq
\Delta_{n,m}I_{\nu}(z)= \lambda_{m,n}I_{\nu}(z).
 \eeq 
 \begin{dfn}
An automorphic form $\phi(z)\in\mathcal{L}^2(SL_n(\bbZ)\backslash X_n)$ of spectral type $\nu\in\bbC^{n-1}$ is a smooth function satisfying: 
 \begin{enumerate}
 \item $\phi(\gamma z) = \phi(z)$, \ \ \  $\forall \ \ \gamma \in SL_n(\bbZ) ,\forall z\in X_n$,
 \item  $\Delta_{m,n} \phi(z) = \lambda_{m,n} \phi(z)$;
  \end{enumerate}
if $\phi(z)$ also satisfies
\beq
\int_{(SL_n(\bbZ)\cap U)\backslash U} \phi(u\cdot z) du=0,
\eeq
for all matrices of the form 
\beq
U = \lbra \bemfive I_{r_1}&&&&\\ & I_{r_2} &&*&\\ &&\ddots&&\\ &&&& I_{r_m}\enm\rbra \subset  SL_n(\bbR)
\eeq
with $r_1+\cdots +r_m = n$, then $\phi(z)$ is a Maass form.

\end{dfn}

For an automorphic form $\phi(z)$ of type $\nu$, the Fourier expansion (\ref{autofourier}) can be made more explicit by the theory of Whittaker functions. Multiplicity one theorem implies that each $\hat{\phi}_{(m_1,\ldots,m_{n-1})}(z)$ can be expressed in terms of Whittaker functions:
 
\beq
W_{(m_1,\ldots,m_{n-1})}^\nu(z,w):=\int_{U_n(\bbR)} I_\nu(w\cdot u\cdot z) e(-m_1u_{1,2}-\cdots-m_{n-1}u_{n-1,n}) \prod _{1\le i<j\le n} d{u_{i,j}},
\eeq
 where $w\in W_n$ is an element of the Weyl group. If $\phi(z)$ is a Maass cusp form, then (\ref{autofourier}) can be further simplified to
 \beq
\phi(z) = \sum_{\gamma\in U_{n-1}(\bbZ)\backslash SL_{n-1}(\bbZ)}\sum_{m_1=1}^\infty\cdots\sum_{m_{n-2}=1}^\infty \sum_{m_{n-1}\neq 0}\left. a_{(m_1,\ldots,m_{n-1})}W_{(m_1,\ldots,m_{n-1})}^\nu(z,w_l) \right|_\gamma,
\eeq
where $w_l$ is the long Weyl element $\bemfour &&&1\\ &&1&\\ &\iddots&&\\ 1&&&\enm$.
 \vspace{20mm}
\subsection{Parabolic Subgroups and Eisenstein Series}

Here we give a summary of Langlands' theory of Eisenstein series associated to Maass forms.

\begin{dfn}
The standard parabolic subgroup $P_{n_1,\ldots,n_r}(\bbR)\subset GL_n(\bbR)$ associated to the partition $n=n_1+n_2+\cdots+n_r$ is defined to be the group of matrices of the form
\beq
\bemfour \frak{m}_{n_1}&*&\cdots&*\\ 0&\frak{m}_{n_2}&\cdots&* \\ \vdots &\vdots&\ddots&\vdots\\0&0&\cdots&\frak{m}_{n_r}\enm,
\eeq
where $\frak{m}_{n_i}\in GL_{n_i}(\bbR)$ for $1\le i\le r$. We also define $P_{n_1,\ldots,n_r}(\bbZ) = P_{n_1,\ldots,n_r}(\bbR)\cap SL_{n}(\bbZ)$. Two parabolic subgroups $P_{n_1,\ldots,n_r}(\bbR),P_{n'_1,\ldots,n'_r}(\bbR)$ of $GL_n(\bbR)$ are said to be associate if the set $\{n_1,\ldots,n_r\}$ is a permutation of the set $\{n'_1,\ldots,n'_r\}$.
\end{dfn}
A parabolic subgroup $P_{n_1,\ldots,n_r}(\bbR)$ can be decomposed into 
\beq
P_{n_1,\ldots,n_r}(\bbR) = N_{n_1,\ldots,n_r}(\bbR)\ M_{n_1,\ldots,n_r}(\bbR),
\eeq
where 
\beq
N_{n_1,\ldots,n_r}(\bbR) = \lbra\bemfour I_{n_1}&*&\cdots&*\\ 0&I_{n_2}&\cdots&*\\ \vdots &\vdots&\ddots&\vdots\\0&0&\cdots&I_{n_r}\enm\in GL_n(\bbR),I_{k}\mbox{ is an $k\times k$ identity matrix }\rbra
\eeq
is the unipotent radical and
\beq
M_{n_1,\ldots,n_r}(\bbR) = \lbra \bemfour \frak{m}_{n_1}&0&\cdots&0\\ 0&\frak{m}_{n_2}&\cdots&0 \\ \vdots &\vdots&\ddots&\vdots\\0&0&\cdots&\frak{m}_{n_r}\enm, \frak{m}_k\in GL_{n_k}(\bbR)\rbra
\eeq
is the Levi component. This is the Langlands decomposition of parabolic subgroups. We define $N_{n_1,\ldots,n_r}(\bbZ) = N_{n_1,\ldots,n_r}(\bbR)\cap SL_{n}(\bbZ),M_{n_1,\ldots,n_r}(\bbZ) = M_{n_1,\ldots,n_r}(\bbR)\cap SL_{n}(\bbZ)$. For $g\in P_{n_1,\ldots,n_r}(\bbR)$, Langlands decomposition naturally gives rise to the projection maps $\frak{m}_{n_i}:P_{n_1,\ldots,n_r}(\bbR)\mapsto GL_{n_i}(\bbR)$ by
\beq
g = \bemfour I_{n_1}&*&\cdots&*\\ 0&I_{n_2}&\cdots&*\\ \vdots &\vdots&\ddots&\vdots\\0&0&\cdots&I_{n_r}\enm\cdot \bemfour \frak{m}_{n_1}(g)&0&\cdots&0\\ 0&\frak{m}_{n_2}(g)&\cdots&0 \\ \vdots &\vdots&\ddots&\vdots\\0&0&\cdots&\frak{m}_{n_r}(g)\enm.
\eeq
To describe Eisenstein series associated to Maass forms we also need to define $I$-functions associated to parabolic subgroups. 
\begin{dfn}
Let $s= (s_1,\ldots,s_r)\in\bbC^r$ satisfying $\sum_{i=1}^r n_i s_i=0$ and for $z\in X_n$ in Iwasawa form (\ref{coor}) we define
\beq
I_s(z,P_{n_1,\ldots,n_r}) = \lp \prod_{j_1=n-n_{1}+1}^n Y_{j_1}\rp^{s_1} \cdot \lp \prod_{j_2=n-n_{1}-n_{2}+1}^{n-n_1} Y_{j_2}\rp^{s_2}\cdot \cdots\cdot  \lp \prod_{j_r=1}^{n_r} Y_{j_r}\rp^{s_r},
\eeq
where $Y_1,Y_2,\ldots, Y_n$ are defined by
\beq
\bemfour  Y_n&&&\\ &Y_{n-1}&&\\&&\ddots&\\ &&& Y_1 \enm= \bemfive  y_1y_2\cdots y_{n-1}\\&y_1y_2\cdots y_{n-2}\\ &&\ddots \\ &&& y_1\\ &&&&1\enm.
\eeq
\end{dfn}

\begin{dfn}
Let $P_{n_1,\ldots,n_r}(\bbR)$ be a parabolic subgroup of $GL_{n}(\bbR)$ with projection maps $\frak{m}_{n_i}$ defined by Langlands decomposition. Let $\phi_i$ be Maass forms on $GL_{n_i}(\bbR)$ for $1\le i\le r$, and let $s=(s_1,\ldots,s_r)\in\bbC^r$ satisfy 
\beq
\sum_{i=1}^r n_is_i = 0.
\eeq
We define the Eisenstein series $E_{(n_1,\ldots,n_r)}\lp z,s,\phi_1,\ldots,\phi_r\rp$ by
\beq
E_{(n_1,\ldots,n_r)}\lp z,s,\phi_1,\ldots,\phi_r\rp = \sum_{\gamma\in P_{n_1,\ldots,n_r}(\bbZ)\backslash SL_n(\bbZ)} \prod_{i=1}^r\phi_i(\frak{m}_i(\gamma z))I_s(\gamma z,P_{n_1,\ldots,n_r}).
\eeq
Because of the relation $\sum_{i=1}^r n_is_i = 0$, we may eliminate $s_r$. So $E_{n_1,\ldots,n_r}\lp z,s,\phi_1,\ldots,\phi_r\rp$ is really a function of $z$ and $s=(s_1,\ldots,s_{r-1})\in\bbC^{r-1}$ and we will use this convention for the remaining part of this paper. We want to especially point out that if we use the partition $n=1+\cdots+1$, we get the minimal parabolic Eisenstein series
\beq
E_{(1,\ldots,1)}(z,s) =\sum_{\gamma\in P_{1,\ldots,1}(\bbZ)\backslash SL_n(\bbZ)}   I_s(\gamma z).
\eeq
For the maximal parabolic $P_{n-1,1}$, we can define the totally degenerate Eisenstein series 
\beq
E_{(n-1,1)}(z,s,1) = \sum_{\gamma\in P_{n-1,1}(\bbZ)\backslash SL_n(\bbZ)}  I_s(\gamma z,P_{n-1,1}).
\eeq
This is the main object of study in this paper.
\end{dfn}

Langlands' theory of constant terms along arbitrary parabolic subgroups play a central role in understanding these Eisenstein series. We will use this theory to understand the Fourier expansion of some Eisenstein series.

\begin{prop}
\label{constant}
Let $n=n_1+\ldots+n_r$ be a partition in descending order with $n_r\ge2$, and let $E_{n_1,\ldots,n_r}\lp z,s,\phi_1,\ldots,\phi_r\rp$ be an Eisenstein series. Then its constant terms along $P_{(n-1,1)}$, $P_{(n-3,1,2)}$, $P_{(1,\ldots,1)}$ are zero:
\beq
\int_{N_{n-1,1}(\bbZ)\backslash N_{n-1,1}(\bbR)} E_{(n_1,\ldots,n_r)}\lp z,s,\phi_1,\ldots,\phi_r\rp =0,\\
\int_{N_{n-3,1,2}(\bbZ)\backslash N_{n-3,1,2}(\bbR)} E_{(n_1,\ldots,n_r)}\lp z,s,\phi_1,\ldots,\phi_r\rp =0,\\
\int_{N_{1,\ldots,1}(\bbZ)\backslash N_{n-1,1}(\bbR)} E_{(n_1,\ldots,n_r)}\lp z,s,\phi_1,\ldots,\phi_r\rp =0.
\eeq
\end{prop}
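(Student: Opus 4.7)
The plan is to reduce the proposition to Langlands' constant-term principle: the constant term of an Eisenstein series built from cuspidal data on $P$ along a standard parabolic $P'$ vanishes unless $P$ and $P'$ are associate. In our setting the Levi of $P=P_{n_1,\ldots,n_r}$ is $GL_{n_1}\times\cdots\times GL_{n_r}$ with every block of size at least $2$ (the partition is in descending order with $n_r\ge 2$), whereas each of $P_{n-1,1}$, $P_{n-3,1,2}$, and $P_{1,\ldots,1}$ has a Levi containing at least one $GL_1$ factor. Since no rearrangement of $(n_1,\ldots,n_r)$ contains a $1$, none of the three target parabolics is associate to $P$; this is the structural reason for the vanishing.

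To execute the calculation, first I would expand
\beq
E_{n_1,\ldots,n_r}(z,s,\phi_1,\ldots,\phi_r) = \sum_{\gamma\in P(\bbZ)\backslash SL_n(\bbZ)} F(\gamma z),\qquad F(z)=\prod_i\phi_i(\frak{m}_i(z))\,I_s(z,P),
\eeq
and for each target $P'$ with unipotent radical $N'$ break the double-coset space $P(\bbZ)\backslash SL_n(\bbZ)/P'(\bbZ)$ into Bruhat cells indexed by a set of Weyl representatives $\omega$. Unfolding yields
\beq
\int_{N'(\bbZ)\backslash N'(\bbR)} E(nz)\,dn \;=\; \sum_\omega \int_{N'_\omega\backslash N'(\bbR)} F(\omega n z)\,dn,
\eeq
where $N'_\omega = N'(\bbZ)\cap\omega^{-1}P(\bbZ)\omega$.

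Next I would factor $N'(\bbR)$ as a product of its intersection with $\omega^{-1}N_P(\bbR)\omega$ (where $N_P$ is the unipotent radical of $P$) and its intersection with the conjugated Levi $\omega^{-1}M(\bbR)\omega$; under the Langlands projection the second factor contributes, inside each block, a unipotent subgroup $V_i\subset GL_{n_i}(\bbR)$. Whenever $\omega M\omega^{-1}\ne M'$, for at least one index $i$ the group $V_i$ is a nontrivial unipotent radical of a proper parabolic of $GL_{n_i}$, so the cell integral contains the factor
\beq
\int_{(SL_{n_i}(\bbZ)\cap V_i)\backslash V_i} \phi_i(v\cdot m)\,dv,
\eeq
which vanishes by the cuspidality of $\phi_i$. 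Combined with the non-associativity observation, this forces every Bruhat cell to contribute zero.

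The main obstacle is the bookkeeping in the Bruhat/unipotent factorization, i.e., checking for each $\omega$ which block $GL_{n_i}$ receives the killing unipotent integral. The minimal-parabolic case $P_{1,\ldots,1}$ has the largest Bruhat decomposition and is the most tedious, but the mechanism is uniform: any Weyl element that sends a Levi factor of $P$ across a block boundary of $P'$ produces a nontrivial integral of some $\phi_i$ against the trivial character on a unipotent subgroup of $GL_{n_i}$, to which the cuspidal hypothesis applies.
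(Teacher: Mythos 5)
Your proposal is correct and follows essentially the same route as the paper: the paper's proof simply cites the proposition in II.1.7 of Moeglin--Waldspurger (whose proof is exactly the Bruhat-cell unfolding and cuspidality argument you sketch) together with the observation that, since every $n_i\ge 2$, no Weyl conjugate of $M_{n_1,\ldots,n_r}(\bbR)$ can be contained in a Levi possessing a $GL_1$ block. One small imprecision to note: the correct general vanishing criterion is not that $P$ and $P'$ fail to be associate but that $wMw^{-1}\not\subset M'$ for every Weyl element $w$ (constant terms along non-associate parabolics can be nonzero --- e.g.\ the constant term of the minimal-parabolic Eisenstein series on $GL_3$ along $P_{2,1}$); your cell-by-cell mechanism implicitly uses the correct containment criterion, and since no sub-collection of $\{n_1,\ldots,n_r\}$ can sum to $1$, the conclusion is unaffected.
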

\begin{proof}
This is a direct result of the proposition in II.1.7 of \cite{MW}. Since $n_i\ge 2$ for all $1\le i\le r$, $w M_{n_1,\ldots,n_r}(\bbR)w^{-1}$ cannot be contained in either $M_{(n-1,1)}(\bbR)$, $M_{(n-3,1,2)}(\bbR)$ or $M_{(1,\ldots,1)}(\bbR)$ for any Weyl element $w$. 
\end{proof}

\begin{cor} \label{coezero}
Let $n=n_1+\ldots+n_r$ be a partition in descending order with $n_r\ge2$, then in the Fourier expansion of $E_{(n_1,\ldots,n_r)}\lp z,s,\phi_1,\ldots,\phi_r\rp$, the Fourier coefficients $a_{(m_1,\ldots,m_{n-2},0)}$, $ a_{(m_1,\ldots m_{n-4},0,0,m_{n-1})}$ and $a_{(0,\ldots,0)}$ are all zero.
\end{cor}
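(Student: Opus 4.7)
The plan is to identify each of the three Fourier coefficients with the constant term of $E_{(n_1,\ldots,n_r)}$ along one of the parabolics $P_{1,\ldots,1}$, $P_{n-1,1}$, or $P_{n-3,1,2}$ treated by Proposition \ref{constant}, after which the corollary is immediate.

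The key algebraic input is the semidirect decomposition $U_n(\bbR) = N_{n_1,\ldots,n_r}(\bbR) \rtimes U_M(\bbR)$, where $U_M$ is the block-diagonal embedding of $U_{n_1}(\bbR) \times \cdots \times U_{n_r}(\bbR)$ inside the Levi $M_{n_1,\ldots,n_r}(\bbR)$. Writing $u = n \cdot u_M$, this restricts to $U_n(\bbZ) = N(\bbZ) \rtimes U_M(\bbZ)$, and the Haar measure factors as $\prod_{i<j}du_{i,j} = \prod dn_{i,j} \cdot \prod d(u_M)_{i,j}$. A direct matrix computation shows that $u_{k,k+1} = (u_M)_{k,k+1}$ when $k$ and $k+1$ lie in the same block of the partition, and $u_{k,k+1} = n_{k,k+1}$ when they lie in consecutive blocks; consequently the additive character $e\lp-\sum m_k u_{k,k+1}\rp$ factors as a character on the $U_M$-coordinates times a character on the $N$-coordinates.

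For each of the three cases I would choose the partition so that every super-diagonal character landing in $N$ is already zero. For $a_{(0,\ldots,0)}$, take the minimal parabolic $P_{1,\ldots,1}$: $U_M$ is trivial and the Fourier integral is by definition the constant term along $N_{1,\ldots,1}$. For $a_{(m_1,\ldots,m_{n-2},0)}$, take $P_{n-1,1}$; the only super-diagonal entry of $N_{n-1,1}$ is $u_{n-1,n}$, carrying the zero character $m_{n-1}$. For $a_{(m_1,\ldots,m_{n-4},0,0,m_{n-1})}$, take $P_{n-3,1,2}$; the super-diagonal entries landing in $N_{n-3,1,2}$ are $u_{n-3,n-2}$ and $u_{n-2,n-1}$, carrying the zero characters $m_{n-3}$ and $m_{n-2}$. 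In all three cases Fubini rewrites the Fourier coefficient as
\beq
\int_{U_M(\bbZ)\backslash U_M(\bbR)} \chi(u_M) \left[\int_{N(\bbZ)\backslash N(\bbR)} E_{(n_1,\ldots,n_r)}(n u_M z, s, \phi_1, \ldots, \phi_r)\, dn\right] du_M,
\eeq
where $\chi$ denotes the surviving character on the $U_M$-coordinates. The bracketed inner integral is precisely the constant term of $E_{(n_1,\ldots,n_r)}$ along the chosen parabolic evaluated at $u_M z$; by Proposition \ref{constant} it vanishes identically, hence so does the whole Fourier coefficient.

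The only work required is the combinatorial matching of the vanishing $m_k$'s with the super-diagonal entries of $N_{n_1,\ldots,n_r}$, together with the routine verification that measures and characters factor cleanly under $U_n = N \rtimes U_M$; there is no substantive analytic obstacle, since Proposition \ref{constant} does all the heavy lifting.
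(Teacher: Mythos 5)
Your proposal is correct and is essentially the paper's own argument: the paper likewise factors the integration over $U_n$ into the unipotent radical of the relevant parabolic times the remaining coordinates (performing explicitly, in the $P_{n-3,1,2}$ case, the unimodular change of variables $u_{i,n}-u_{i,n-1}u_{n-1,n}\mapsto u_{i,n}$ that your ``routine verification'' of the semidirect-product factorization encapsulates), and then identifies the inner integral with the vanishing constant term from Proposition \ref{constant}. The only difference is presentational — you phrase the decomposition structurally as $U_n=N\rtimes U_M$ while the paper writes out the matrices — so there is nothing to add.
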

\begin{proof}
Let $f(z)E_{(n_1,\ldots,n_r)}\lp z,s,\phi_1,\ldots,\phi_r\rp$. We have
\beq
\hat{f}_{(m_1,\ldots,m_{n-2},0)}(z) &=& \int_0^1\cdots \int_0^1 f(u\cdot z) e(-m_1u_{1,2}-m_2u_{2,3} -\cdots - m_{n-2}u_{n-2,n-1})\prod _{1\le i<j\le n} d{u_{i,j}}\\
&=& \int_0^1\cdots \int_0^1 f\lp \bemfive 1&0& 0&\cdots& u_{1,n}\\ &1&0&\cdots&u_{2,n} \\ &&\ddots&\cdots&\vdots\\ &&&1&u_{n-1,n}\\&&&&1\enm\cdot \bemfive 1&u_{1,2}&\cdots&u_{1,n-1}&0\\ &1&\cdots& u_{2,n-1}&0 \\ &&\ddots&\vdots&\vdots \\ &&&1&0\\ &&&&1\enm\cdot z\rp \prod _{1\le i\le n-1} d{u_{i,n}}
\\ &&\times e(-m_1u_{1,2}-m_2u_{2,3} -\cdots - m_{n-2}u_{n-2,n-1})\prod _{1\le i<j\le n-1} d{u_{i,j}}\\
&=&\int_0^1\cdots \int_0^1 \int_{N_{n-1,1}(\bbZ)\backslash N_{n-1,1}(\bbR)} f\lp u' \cdot \bemfive 1&u_{1,2}&\cdots&u_{1,n-1}&0\\ &1&\cdots& u_{2,n-1}&0 \\ &&\ddots&\vdots&\vdots \\ &&&1&0\\ &&&&1\enm\cdot z\rp \prod _{1\le i\le n-1} d{u'_{i,n}}
\\ &&\times e(-m_1u_{1,2}-m_2u_{2,3} -\cdots - m_{n-2}u_{n-2,n-1})\prod _{1\le i<j\le n-1} d{u_{i,j}}\\
&=&0,
\eeq
by Proposition \ref{constant}.
For $\hat{f}_{(m_1,\ldots m_{n-4},0,0,m_{n-1})}$, the proof is similar as we have
\beq
u&=&\bemseven 1&0&\cdots&0&u_{1,n-2}&u_{1,n-1}&u_{1,n} -u_{1,n-1}u_{n-1,n} \\ 
&1&\ddots&\vdots&u_{2,n-2}&u_{2,n-1}&u_{2,n}  -u_{2,n-1}u_{n-1,n} \\
&&\ddots&0&\vdots&\vdots&\vdots\\
&& &1&u_{n-3,n-2}&u_{n-3,n-1}&u_{n-3,n} -u_{n-3,n-1}u_{n-1,n} \\
&& & &1&u_{n-2,n-1}&u_{n-2,n} -u_{n-2,n-1}u_{n-1,n}\\
&& & & &1&0\\
&& & & &&1 \enm\\
&&\times \bemseven 1&u_{1,2}&\cdots&u_{1,n-3}&0&0&0\\ &1&\cdots& u_{2,n-3}&0&0&0 \\ &&\ddots&\vdots&\vdots&\vdots&\vdots \\ &&&1&0&0&0\\ &&&&1&0&0\\ &&&&&1&u_{n-1,n}\\ &&&&&&1\enm.
\eeq
A simple change of variable $u_{i,n}-u_{i,n-1}u_{n-1,n}\mapsto u_{i,n}$ with the fact that $f$ is automorphic give the desired result again by Proposition \ref{constant}. Finally $a_{(0,\ldots,0)}=0$ is a direct result of the last part of Proposition \ref{constant}.
\end{proof}
 \vspace{20mm}
\subsection{Incomplete Eisenstein Series and Spectral Decomposition}

Eisenstein series induced form Maass forms play a central role in the spectral decomposition of the space $\frak{L}(\quo)$. But these Eisenstein series fail to be in $\frak{L}(\quo)$. So we need the theory of incomplete Eisenstein series. (They are  referred as pseudo-Eisenstein in \cite{MW}). Incomplete Eisenstein series were used in the study of quantum ergodicity of $GL(2)$ Eisenstein series in \cite{LS}. The analogous theory of incomplete Eisenstein series in more general settings can be found in \cite{L,MW,VE}. Here we give a brief summary for this theory on $GL(n)$ explicitly. First we clarify the definition of Mellin transform.
 
 \begin{dfn}
 For $\eta\in C_0^\infty ((\bbR^+)^{n})$, we define the $n$-dimensional Mellin transform of $\eta$ to be
 \beq
 \tilde{\eta}(s_1,\ldots,s_n) = \int_0^\infty\cdots\int_0^\infty \eta(y_1,\ldots,y_n) y_1^{-s_1}\cdots y_n^{-s_n}\ \frac{dy_1\cdots dy_{n}}{y_1\cdots y_n}.
 \eeq
 \end{dfn}

 \begin{dfn}\label{def4}
 
For $\eta\in C_0^\infty ((\bbR^+)^{r-1})$, we define the incomplete Eisenstein series $E_{(n_1,\ldots,n_r)}\lp z,\eta,\phi_1,\ldots,\phi_r\rp$ associated to $E_{(n_1,\ldots,n_r)}\lp z,s,\phi_1,\ldots,\phi_r\rp$ by
\begin{eqnarray*}
E_{(n_1,\ldots,n_r)}\lp z,\eta,\phi_1,\ldots,\phi_r\rp = \frac{1}{(2\pi i)^{r-1}}\int_{(2)}\cdots\int_{(2)} \tilde{\eta}(s_1,\ldots,s_{r-1}) E_{(n_1,\ldots,n_r)}\lp z,s,\phi_1,\ldots,\phi_r\rp  ds_1\cdots d s_{{r-1}}.
\end{eqnarray*}
 \end{dfn}
 Note that the convergence of these integrals in Definition \ref{def4} are guaranteed by the rapid decay of $\tilde{\eta}$ in imaginary parts of the arguments.

Let $\mathcal{H}_{cusp}$ denote the space of $GL(n)$ cusp forms, $\mathcal{H}_{(n_1,\ldots,n_r)}$ denote the space spanned by incomplete Eisenstein series $E_{(n_1,\ldots,n_r)}\lp z,\eta,\phi_1,\ldots,\phi_r\rp$. The following theorem is a summary of the results presented in Section II of \cite{MW} applied to the group $GL(n)$.
\begin{thm} \label{prop41}
We have the following spectral decomposition of automorphic forms on $GL(n)$:
\beq
\mathfrak{L}\lp\quo \rp = \mathcal{H}_{cusp} \oplus  \bigoplus_{((n_1,\ldots,n_r))}  \mathcal{H}_{(n_1,\ldots,n_r)}.
\eeq
Note that  $\mathcal{H}_{(n_1,\ldots,n_r)} =  \mathcal{H}_{(n'_1,\ldots,n'_r)}$ if $(n_1,\ldots,n_r)$ and $(n'_1,\ldots,n'_r)$ are associate. So $((n_1,\ldots,n_r))$ means that the decomposition is over representatives of associate partitions. For convenience, we always choose the partition in non-increasing order.
\end{thm}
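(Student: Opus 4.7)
The plan is to follow the pseudo-Eisenstein series decomposition of Moeglin--Waldspurger \cite{MW} specialized to $GL(n)$. By definition, $\mathcal{H}_{cusp}$ is the closed subspace of $\mathfrak{L}(\quo)$ consisting of functions whose constant term along every proper standard parabolic vanishes; this is clearly a closed subspace and therefore an orthogonal summand, so the task reduces to decomposing its orthogonal complement as a Hilbert direct sum of the $\mathcal{H}_{(n_1,\ldots,n_r)}$ indexed by associate classes of partitions.

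The first preliminary step I would carry out is to check that each incomplete Eisenstein series $E_{(n_1,\ldots,n_r)}(z,\eta,\phi_1,\ldots,\phi_r)$ actually lies in $\mathfrak{L}(\quo)$. Since $\eta \in C_0^\infty$, its Mellin transform $\tilde{\eta}$ decays faster than any polynomial in the imaginary directions, so the contour integrals in Definition \ref{def4} converge absolutely. Shifting the contour into the region of absolute convergence for $E_{(n_1,\ldots,n_r)}(z,s,\phi_1,\ldots,\phi_r)$ and unfolding exhibits the incomplete Eisenstein series as a sum over $P_{n_1,\ldots,n_r}(\bbZ)\backslash SL_n(\bbZ)$ of a smooth function that is compactly supported in the Levi variables, which is then easily seen to decay rapidly in the cuspidal directions of $\quo$ and hence be square-integrable.

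Next I would establish the orthogonality statement. For non-associate partitions, the pairing $\langle E_{(n_1,\ldots,n_r)}(\cdot,\eta,\phi_\bullet), E_{(n'_1,\ldots,n'_{r'})}(\cdot,\eta',\phi'_\bullet)\rangle$ is computed by unfolding one of the series and pushing the integration to the Levi component; the result is (up to Mellin inverse) an integral of the constant term of one series against the inducing data of the other. The proposition in II.1.7 of \cite{MW}, which is the same input used in Proposition \ref{constant}, guarantees that this constant term vanishes whenever no Weyl element conjugates one Levi into the other, i.e.\ exactly when the partitions are non-associate. Orthogonality between $\mathcal{H}_{cusp}$ and each $\mathcal{H}_{(n_1,\ldots,n_r)}$ is similar: unfolding produces an integral of a cusp form against a function on the unipotent radical, which vanishes by cuspidality.

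The hard part is spanning: showing that every $f \in \mathcal{H}_{cusp}^\perp$ lies in the closure of $\bigoplus_{((n_1,\ldots,n_r))}\mathcal{H}_{(n_1,\ldots,n_r)}$. The strategy is to induct on $n$, using the spectral decomposition of lower-rank $\mathfrak{L}(SL_{n_i}(\bbZ)\backslash X_{n_i})$ on each Levi factor, and to recover $f$ from its constant terms via Mellin inversion combined with a truncation argument. This inductive expansion, together with the careful analysis of the residual contributions from poles of the Eisenstein series shifted through by the contour, is the technical content of \cite{MW} Chapter II; in the present paper one invokes that framework directly rather than reproving it, and the main obstacle is simply verifying that the hypotheses there apply verbatim to $GL(n)$ with the conventions in Section 2.
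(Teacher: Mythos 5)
Your proposal is consistent with what the paper does: the paper offers no proof of Theorem \ref{prop41} at all, stating it as ``a summary of the results presented in Section II of \cite{MW} applied to the group $GL(n)$,'' and your outline (square-integrability of the incomplete Eisenstein series, orthogonality for non-associate partitions via the constant-term criterion of II.1.7 of \cite{MW}, and deferring the spanning/completeness argument to Chapter II of \cite{MW}) is a correct sketch of exactly that framework. Since you ultimately invoke \cite{MW} for the substantive content just as the paper does, this is essentially the same approach.
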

From this decomposition, we will prove our main theorem by estimating the following integrals as $t\rightarrow\infty$ as mentioned in the introduction:
\beq
&&\int_{SL_n(\bbZ)\backslash X_n}\phi(z)\lbar E_{n-1,1}\lp z,\frac{1}{2}+it,1\rp\rbar^2d^*z,\\
&&\int_{SL_n(\bbZ)\backslash X_n}E_{1,\ldots,1}(z,\eta) \lbar E_{n-1,1}\lp z,\frac{1}{2}+it,1\rp\rbar^2d^*z,\\
&&\int_{SL_n(\bbZ)\backslash X_n}E_{n_1,\ldots,n_r}(z,\psi,u_1,\ldots,u_r) \lbar E_{n-1,1}\lp z,\frac{1}{2}+it,1\rp\rbar^2d^*z.
\eeq
where $\phi(z)$ is a $GL(n)$ cusp form.

 \newpage

\section{Fourier Expansion of Degenerate Eisenstein Series}
 
  \vspace{10mm}
\subsection{Fourier Expansion of Degenerate Eisenstein Series}
In this section, we present a complete calculation of the Fourier expansion of the degenerate Eisenstein series.

\begin{thm}\label{prop3}
The degenerate Eisenstein series for $GL(n)$ has only degenerate terms in its Fourier-Whittaker expansion for $n\ge3$. More precisely,
\beq
E_{(n-1,1)}(z,s,1) = \sum_{m_1\in\bbZ}\hat{\phi}_{(m_1,0,\ldots,0)}(z)+\sum_{i=2}^{n-1}\sum_{\gamma_i\in P_{(i-1,1)}(\bbZ)\backslash SL_i(\bbZ)} \sum_{m_i=1}^\infty \hat{\phi}_{(0,\ldots,0,m_i,0\ldots,0)}\lp\bemtwo \gamma_i &\\ & I_{n-i}\enm z\rp.
\eeq
where 
\beq
\hat{\phi}_{(m_1,\ldots,m_{n-1})}(z,s):=\int_0^1\cdots\int_0^1 E_{(n-1,1)}(u\cdot z,s,1) e(-m_1u_{1,2}-m_2u_{2,3}-\cdots-m_{n-1}u_{n-1,n})\prod _{1\le i<j\le n} d{u_{i,j}}
\eeq
with $u\in U_n(\bbR)$ and $I_{n-i}$ is the identity matrix of dimension $n-i$.

Furthermore, the coefficients are calculated to be
\beq
\hat{\phi}_{(0,\ldots,0)}(z,s) &=&   \sum_{k=0}^{n-1} \frac{2\xi(ns-n+k+1)}{\xi(ns)} \lp y_1 y_2^2\cdots y_{n-k-1}^{n-k-1}\rp^{(1-s)}    \lp y_{n-k}^{k}y_{n-(k-1)}^{k-1}\cdots y_{n-1}\rp^s; 
\eeq

\beq
\hat{\phi}_{(0,\ldots,0,m_k,0,\ldots,0)}(z,s) &=&   e(m_kx_{k,k+1})\frac{2}{\xi(ns)}|m_k|^{\frac{ns}{2}-\frac{n-k}{2}}\sigma_{-ns+n-k}(|m_k|) K_{\frac{ns}{2}-\frac{n-k}{2}}(2\pi |m_k|y_{n-k}) \\
&&\times \lp y_1 y_2^2\cdots y_{n-k-1}^{n-k-1}\rp^{(1-s)}    \lp y_{n-k}^{k}y_{n-(k-1)}^{k-1}\cdots y_{n-1}\rp^s   y_{n-k}^{-\frac{ns}{2}+\frac{n-k}{2}}.
\eeq

\end{thm}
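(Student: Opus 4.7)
The approach is to rewrite $E_{(n-1,1)}(z,s,1)$ as an Epstein-type lattice sum and then compute each Fourier coefficient by unfolding against the unipotent integration. Cosets in $P_{n-1,1}(\bbZ)\backslash SL_n(\bbZ)$ are parametrized by their bottom rows, i.e., by primitive row vectors $c=(c_1,\ldots,c_n)\in\bbZ^n$ modulo sign. Using the Iwasawa decomposition, one checks that $I_s(\gamma z,P_{n-1,1})=(\det y)^s\,\|c\cdot z\|^{-ns}$, where $\|c\cdot z\|$ is the Euclidean norm of the row vector $c$ acted on by $z$; removing the primitivity condition via $\sum_{c\ne 0}=\zeta(ns)\sum_{c\text{ prim}}$ gives
\beq
E_{(n-1,1)}(z,s,1) \;=\; \frac{1}{2\zeta(ns)}\,\sum_{c\in\bbZ^n\setminus\{0\}}\frac{(\det y)^s}{\|c\cdot z\|^{ns}}.
\eeq
The change of variable $u\mapsto ux^{-1}$ inside the unipotent Fourier integral, together with the character property of $\psi$, factors the coefficient at $z=xy$ as $e(\sum_k m_k x_{k,k+1})$ times the coefficient at $z=y$, accounting for the leading phase $e(m_k x_{k,k+1})$ in the formula and reducing the remaining computation to $x=I$.

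The heart of the argument is a stratification of $c$ by the position $k+1$ of its first nonzero coordinate, $k=0,\ldots,n-1$. In stratum $k$ one has $(cu)_j = c_j+\sum_{i=k+1}^{j-1} c_i u_{i,j}$ and $\|c\cdot uy\|^2 = c_{k+1}^2\, y_{k+1,k+1}^2 + \sum_{j=k+2}^n (cu)_j^2\, y_{jj}^2$. The crucial substitution is $u_{k+1,j}\mapsto v_j:=(cu)_j$ for $j=k+2,\ldots,n$: combined with the summation $\sum_{c_j\in\bbZ}$, this unfolds $\sum_{c_j}\int_0^1 du_{k+1,j}$ into $\int_{\bbR} dv_j$. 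The remaining superdiagonal variables $u_{\ell,\ell+1}$ with $\ell\ne k+1$ integrate against $\psi$ to force $m_\ell=0$, while substituting $u_{k+1,k+2}=(v_{k+2}-c_{k+2})/c_{k+1}$ inside $e(-m_{k+1} u_{k+1,k+2})$ produces a geometric sum over $c_{k+2}\bmod c_{k+1}$ that vanishes unless $c_{k+1}\mid m_{k+1}$. This simultaneously establishes the ``only degenerate coefficients'' structure and supplies the divisibility behind the divisor sum $\sigma_{-ns+n-k}(|m_k|)$.

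With the constraints in place, the remaining integrals are standard. For the trivial character, $\int_{\bbR^{n-k-1}}(1+|w|^2)^{-ns/2}\,dw = \pi^{(n-k-1)/2}\,\Gamma((ns-n+k+1)/2)/\Gamma(ns/2)$ combines with $\sum_{c_{k+1}\ne 0}|c_{k+1}|^{n-k-1-ns}=2\zeta(ns-n+k+1)$ and the prefactor $1/(2\zeta(ns))$ to assemble into the $\xi$-ratio $\xi(ns-n+k+1)/\xi(ns)$. For $m_{k+1}\ne 0$, one first integrates out $v_{k+3},\ldots,v_n$ by iterated Beta integrals, then evaluates the one-dimensional Fourier integral in $v_{k+2}$ via the classical identity $\int_\bbR (a^2+w^2)^{-s}\,e^{-2\pi i\alpha w}\,dw = (2\pi^s|\alpha|^{s-1/2}/\Gamma(s))\,a^{1/2-s}\,K_{s-1/2}(2\pi a|\alpha|)$, producing the claimed $K$-Bessel function, while $\sum_{c_{k+1}\mid m_{k+1}}|c_{k+1}|^{-(ns-n+k)}$ yields $\sigma_{-ns+n-k}(|m_k|)$. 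The main obstacle is the unfolding step: one must carefully track the Jacobians and ranges, interact the change of variables with $\psi$, and verify that the geometric sum over $c_{k+2}\bmod c_{k+1}$ cleanly produces the divisibility $c_{k+1}\mid m_{k+1}$; a secondary bookkeeping task is assembling the surviving $y_{i,i}$-powers together with $(\det y)^s$ into the symmetric monomials $(y_1 y_2^2 \cdots y_{n-k-1}^{n-k-1})^{1-s}(y_{n-k}^k\cdots y_{n-1})^s$ of the stated formula.
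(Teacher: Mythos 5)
Your proposal is correct and is essentially the paper's argument: both pass to the Epstein zeta representation $\zeta(ns)E_{(n-1,1)}(z,s,1)=(\det y)^s\sum_{c\in\bbZ^n\setminus\{0\}}\|c\cdot z\|^{-ns}$, show that only the degenerate coefficients survive, and reduce each surviving coefficient to the same one\-/dimensional integrals (the paper's identities \eqref{eqn1}--\eqref{eqn2}), with the divisor sum arising from the divisibility $c_{k+1}\mid m_{k+1}$. The only differences are organizational and cosmetic: the paper peels off the $a_1=0$ stratum as a lower-rank Epstein sum and inducts on $n$, handling the $a_1\neq 0$ part by Poisson summation in $a_2,\ldots,a_n$, whereas you treat all strata (indexed by the first nonzero coordinate of $c$) in one pass and replace Poisson summation by the equivalent unfolding $\sum_{c_j}\int_0^1\mapsto\int_{\bbR}$ with an explicit Ramanujan-type character sum; just recheck the overall normalization, since your prefactor $\tfrac{1}{2\zeta(ns)}$ as written produces $\xi(ns-n+k+1)/\xi(ns)$ where the theorem states $2\,\xi(ns-n+k+1)/\xi(ns)$.
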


We devote the reminding part of this section to proving the theorem. First rewrite the degenerate Eisenstein series in terms of Epstein Zeta function (see Section 10.7 of \cite{Go} for a proof):

\beq
\zeta(ns) E_{(n-1,1)}(z,s,1) = (y_1^{n-1}y_2^{n-2}\cdots y_{n-1})^s \sum_{(a_1,\ldots,a_n)\in \bbZ^n\backslash \{0\}} (b_1^2+\cdots+ b_n^2) ^{-ns/2}.
\eeq
For
\beq
z=  \bemfive 1 & x_{1,2}& x_{1,3}&\cdots& x_{1,n}\\  &1&x_{2,3}&\cdots & x_{2,n} \\ &&\ddots&&\vdots\\ &&&1& x_{n-1,n}\\&&&&1\enm   \bemfive   y_1y_2\cdots y_{n-1} &&&&\\  & y_1y_2\cdots y_{n-2}&&&\\ &&\ddots&&\\ &&&y_1&\\&&&&1\enm,
\eeq
the $b_i$'s are defined by
\beq
b_1&=& a_1y_1\cdots y_{n-1}\\
b_2&=&(a_1x_{1,2}+a_2) y_1\cdots y_{n-2}\\
&\vdots&\\
b_n&=&(a_1x_{1,n}+a_2x_{2,n}+\cdots+ a_{n-1}x_{n-1,n}+a_n).
\eeq
Let 
\beq
E_{(n-1,1)}^*(z,s) := \sum_{(a_1,\ldots,a_n)\in \bbZ^n\backslash \{0\}} (b_1^2+\cdots+ b_n^2) ^{-ns/2}.
\eeq
First, we separate the terms with $a_1=0$:
\beq
E_{(n-1,1)}^*(z,s) &=& \sum_{\substack{(a_2,\ldots,a_n)\in \bbZ^{n-1}\backslash\{0\} \\ a_1=0}}  (b_2^2+\cdots+ b_n^2) ^{-ns/2} + \sum_{a_1\neq 0} \sum_{(a_2,\ldots,a_n)\in \bbZ^{n-1}}  (b_1^2+\cdots+ b_n^2) ^{-ns/2}\\
&=&E_{(n-2,1)}^*\lp \pi_{n,n-1}(z),\frac{n}{n-1}s\rp + \mathcal{E}_n(z,s), \mbox{ say.}  
\eeq
Here $\pi_{n,n-k}:X_n\rightarrow X_{n-k}$ is the projection map:
\beq
\pi:\bemfive 1 & x_{1,2}& x_{1,3}&\cdots& x_{1,n}\\  &1&x_{2,3}&\cdots & x_{2,n} \\ &&\ddots&&\vdots\\ &&&1& x_{n-1,n}\\&&&&1\enm   \bemfive   y_1y_2\cdots y_{n-1} &&&&\\  & y_1y_2\cdots y_{n-2}&&&\\ &&\ddots&&\\ &&&y_1&\\&&&&1\enm\\
\mapsto \bemfive 1 & x_{1+k,3}& x_{1+k,4}&\cdots& x_{1+k,n}\\  &1&x_{2+k,4}&\cdots & x_{2+k,n} \\ &&\ddots&&\vdots\\ &&&1& x_{n-1,n}\\&&&&1\enm   \bemfive   y_1y_2\cdots y_{n-1-k} &&&&\\  & y_1y_2\cdots y_{n-2-k}&&&\\ &&\ddots&&\\ &&&y_1&\\&&&&1\enm.
\eeq
By Poisson summation,
\beq
\mathcal{E}_n(z,s)&=&\sum_{a_1\neq 0}\sum_{(c_2,\ldots,c_n)\in \bbZ^{n-1}}\int_{-\infty}^\infty\cdots \int_{-\infty}^\infty \frac{e\lp - c_2a_2-\cdots - c_na_n\rp}{(b_1^2+\cdots+ b_n^2)^{ns/2}} da_2\cdots da_n.
\eeq
Thus the Fourier-Whittaker coefficients of $E_{(n-1,1)}^*(z,s)$ are separated into two parts:
\begin{eqnarray}
\frac{\zeta(ns)}{(y_1^{n-1}y_2^{n-2}\cdots y_{n-1})^s}\hat{\phi}_{(m_1,\ldots,m_{n-1})}(z,s) &=& \int_0^1\cdots\int_0^1\lp    E_{(n-2,1)}^*\lp \pi_{n,n-1}(u\cdot z),\frac{n}{n-1}s\rp  +\mathcal{E}_n(u\cdot z,s)\rp\nonumber \\ 
&&\times e(-m_1u_{1,2}-m_2u_{2,3}-\cdots-m_{n-1}u_{n-1,n})\prod _{1\le i<j\le n} d{u_{i,j}} \nonumber\\
&=&\hat{\phi}_{(m_1,\ldots,m_{n-1})}^1(z,s)+\hat{\phi}_{(m_1,\ldots,m_{n-1})}^2(z,s),  \label{induction}
\end{eqnarray}
say. We treat these two parts in the following two lemmata.

\begin{lem}
The first parts of the coefficients $\hat{\phi}_{(m_1,\ldots,m_{n-1})}^1(z,s)$ have the following properties:  \label{lemma31}
\begin{enumerate}
\item $\hat{\phi}_{(m_1,\ldots,m_{n-1})}^1(z,s)=0$ if $m_1\neq 0$;
\item $\hat{\phi}_{(0,m_2,\ldots,m_{n-1})}^1(z,s) =  \frac{\zeta(ns)}{     \lp y_1^{n-2} y_2^{n-3}\cdots y_{n-2}\rp^{\frac{n}{n-1}s}      }\hat{\phi}_{(m_2,\ldots,m_{n-1})}\lp \pi_{n,n-1}(z), \frac{n}{n-1}s\rp.$
\end{enumerate}
\end{lem}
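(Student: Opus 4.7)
The plan hinges on a single observation: the integrand $E_{(n-2,1)}^*(\pi_{n,n-1}(u\cdot z),\frac{n}{n-1}s)$ in the definition of $\hat{\phi}^1_{(m_1,\ldots,m_{n-1})}(z,s)$ depends only on the lower-right $(n-1)\times(n-1)$ block of $u\cdot z$, and is therefore independent of the first-row variables $u_{1,2},\ldots,u_{1,n}$ of $u$. To make this rigorous, I would first establish the identity $\pi_{n,n-1}(u\cdot z) = u'\cdot\pi_{n,n-1}(z)$, where $u'\in U_{n-1}(\bbR)$ is obtained from $u$ by deleting its first row and column. Writing $z=x\cdot y$ in Iwasawa form, one notes that $u\cdot z=(u\cdot x)\cdot y$ is still in Iwasawa form (the product of two upper-triangular unipotent matrices is again upper-triangular unipotent), and a direct block-multiplication check shows that the lower-right $(n-1)\times(n-1)$ block of $u\cdot x$ is the product of the corresponding blocks of $u$ and $x$. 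I expect this block-matrix identity to be the only real technical point; everything else reduces to elementary Fourier analysis and bookkeeping.

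Given the identity, part (1) is immediate: since the integrand is independent of $u_{1,2}$, the integral $\int_0^1 e(-m_1 u_{1,2})\,du_{1,2}$ forces $\hat\phi^1_{(m_1,\ldots,m_{n-1})}(z,s) = 0$ whenever $m_1\ne 0$.

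For part (2), set $m_1=0$; then the $n-1$ integrals over $u_{1,2},\ldots,u_{1,n}$ each contribute a factor of $1$, and the surviving integral (after relabeling $u'_{i,j}=u_{i+1,j+1}$) is exactly the $(m_2,\ldots,m_{n-1})$-Fourier coefficient of $E_{(n-2,1)}^*(\,\cdot\,,\frac{n}{n-1}s)$ on $X_{n-1}$, evaluated at the point $\pi_{n,n-1}(z)$. To rewrite this in terms of $\hat\phi_{(m_2,\ldots,m_{n-1})}$ of the actual Eisenstein series on $GL(n-1)$, I would apply the normalization identity
\begin{equation*}
\zeta((n-1)s')\,E_{(n-2,1)}(z',s',1) = ((y')_1^{n-2}(y')_2^{n-3}\cdots(y')_{n-2})^{s'}\,E_{(n-2,1)}^*(z',s')
\end{equation*}
with $s'=\tfrac{n}{n-1}s$ (so that $(n-1)s'=ns$), noting that the $y$-part of $\pi_{n,n-1}(z)$ consists of $y_1,\ldots,y_{n-2}$, and that $u'$ fixes this $y$-part since it is unipotent. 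This conversion produces precisely the prefactor $\zeta(ns)/(y_1^{n-2}y_2^{n-3}\cdots y_{n-2})^{\frac{n}{n-1}s}$ multiplying $\hat\phi_{(m_2,\ldots,m_{n-1})}(\pi_{n,n-1}(z),\frac{n}{n-1}s)$, which matches the claim.
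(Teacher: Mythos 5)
Your proposal is correct and follows essentially the same route as the paper: the key identity $\pi_{n,n-1}(u\cdot z)=\pi_{n,n-1}(u)\cdot\pi_{n,n-1}(z)$ (your $u'\cdot\pi_{n,n-1}(z)$) is exactly the paper's equation for the projection, after which part (1) follows from the vanishing of $\int_0^1 e(-m_1u_{1,2})\,du_{1,2}$ and part (2) from recognizing the surviving integral as the corresponding Fourier coefficient on $GL(n-1)$ with parameter $\tfrac{n}{n-1}s$ and applying the $\zeta(ns)$ normalization. The only difference is that you spell out the block-multiplication verification that the paper compresses into ``by direct matrix multiplication,'' which is a harmless elaboration.
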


\begin{proof}
By direct matrix multiplication, we find that
\begin{equation}
\pi_{n,n-1}(u\cdot z) = \pi_{n,n-1}(u)\cdot \pi_{n,n-1}(z).\label{proj}
\end{equation}
Thus the integral against the variable $u_{1,2}$ gives $0$ unless $m_1=0$. This proves part (1).

For part (2), again by equation \ref{proj} the integrals against the variables $u_{1,j}$ for $2\le j\le n$ have no effect. So
\beq
 &&\int_0^1\cdots\int_0^1    E_{(n-2,1)}^*\lp \pi_{n,n-1}(u\cdot z),\frac{n}{n-1}s\rp  e(-m_1u_{1,2}-m_2u_{2,3}-\cdots-m_{n-1}u_{n-1,n})\prod _{1\le i<j\le n} d{u_{i,j}}\\
 &=&\int_0^1\cdots\int_0^1    E_{(n-2,1)}^*\lp \pi_{n,n-1}(u)\cdot \pi_{n,n-1}(z),\frac{n}{n-1}s\rp  e(-m_2u_{2,3}-\cdots-m_{n-1}u_{n-1,n})\prod _{2\le i<j\le n} d{u_{i,j}}\\
 &=& \frac{\zeta(ns)}{     \lp y_1^{n-2} y_2^{n-3}\cdots y_{n-2}\rp^{\frac{n}{n-1}s}      }\hat{\phi}_{(m_2,\ldots,m_{n-1})}\lp \pi_{n,n-1}(z), \frac{n}{n-1}s\rp.
\eeq
\end{proof}

\begin{lem}
The term $\hat{\phi}_{(m_1,\ldots,m_{n-1})}^2(z,s)$ in equation \ref{induction} can be nonzero only if $m_2=\cdots=m_{n-1}=0$. For $m_1\neq0$
\beq\hat{\phi}_{(m_1,0\ldots,0)}^2(z,s) &=& e(m_1x_{1,2})\frac{2\pi^{\frac{ns}{2}}}{\Gamma\lp\frac{ns}{2}\rp}|m_1|^{\frac{ns}{2}-\frac{n-1}{2}}\sigma_{-ns+n-1}(|m_1|)(y_1^{n-3}\cdots y_{n-3})^{-1} (y_1\cdots y_{n-2})^{-(ns-n+2)}\\
&&\times y_{n-1}^{-\frac{ns}{2}+\frac{n-1}{2}} K_{\frac{ns}{2}-\frac{n-1}{2}}(2\pi |m_1|y_{n-1});\eeq
for $m_1=0$
\beq
&&\hat{\phi}_{(0,\ldots,0)}^2(z,s) \\&=& \frac{2\pi^{\frac{n-1}{2}} \zeta(ns-n+1) \Gamma\lp\frac{ns}{2}-\frac{n-1}{2}\rp}{\Gamma\lp\frac{ns}{2}\rp} (y_1^{n-2}\cdots y_{n-2})^{-1} (y_1\cdots y_{n-1})^{-ns+n-1} .
\eeq
\end{lem}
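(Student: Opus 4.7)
The plan is to evaluate $\hat\phi^2_{(m_1,\ldots,m_{n-1})}(z,s)$ via two successive changes of variable followed by explicit Fourier integration. First substitute $A = uX$ in the outer $u$-integral: since $X$ and $u$ are both upper unipotent, this is a Jacobian-one bijection of $U_n(\bbR)$, and using the $U_n(\bbZ)$-invariance of $\mathcal{E}_n(AY, s)$ one may bring the domain back to $U_n(\bbZ)\backslash U_n(\bbR)$. Because $(AX^{-1})_{i,i+1} = A_{i,i+1} - x_{i,i+1}$ (the cross terms being empty for adjacent indices), this substitution pulls out the phase factors $\prod_{i=1}^{n-1} e(m_i x_{i,i+1})$ recorded in the lemma and reduces the problem to evaluating
\[
J_m(Y,s) := \int_{U_n(\bbZ)\backslash U_n(\bbR)} \mathcal{E}_n(AY,s)\, e\Bigl(-\sum_i m_i A_{i,i+1}\Bigr)\, dA.
\]

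Inside $\mathcal{E}_n(AY,s)$, apply the triangular substitution $\tilde a = aA$ in the Poisson variables $(a_2,\ldots,a_n)$. This is another Jacobian-one change that diagonalizes the quadratic form into $b_1^2+\cdots+b_n^2 = (a_1 y_1\cdots y_{n-1})^2 + \sum_{i\ge 2}(\tilde a_i\, y_1\cdots y_{n-i})^2$, while the character $e(-\sum_{i\ge 2} c_i a_i)$ becomes $e(-a_1 M_1 - \sum_{j\ge 2} M_j \tilde a_j)$ where I set $M_j = \sum_{i\ge \max(j,2)} c_i (A^{-1})_{j,i}$. Scaling $\beta_i = (y_1\cdots y_{n-i})\tilde a_i$, the inner integral reduces to the classical formula
\[
\int_{\bbR^{n-1}} \frac{e(-\xi\cdot\beta)}{(C^2+|\beta|^2)^{ns/2}}\, d\beta = \frac{2\pi^{ns/2}}{\Gamma(ns/2)}\, C^{(n-1-ns)/2}\, |\xi|^{(ns-n+1)/2}\, K_{(ns-n+1)/2}\bigl(2\pi C|\xi|\bigr),
\]
with $C = |a_1| y_1\cdots y_{n-1}$ and $\xi_i = M_i/(y_1\cdots y_{n-i})$.

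The crucial observation is that for $j\ge 2$ the quantity $M_j$ depends only on the entries $A_{i,\ell}$ with $i\ge 2$, while the identity $AA^{-1} = I$ yields $M_1 = -\sum_{k=2}^n A_{1,k}M_k$; hence the first row $A_{1,*}$ appears only multilinearly in the phase. Integrating $A_{1,*}$ first therefore factors as independent $\int_0^1$'s. For $k = n$ one has $M_n = c_n\in\bbZ$, so $\int_0^1 e(a_1 c_n A_{1,n})\,dA_{1,n} = \delta_{c_n,0}$ since $a_1\ne 0$; inductively from $k = n-1$ down to $k = 3$, once $c_n = \cdots = c_{k+1} = 0$, one has $M_k = c_k\in\bbZ$ and the $A_{1,k}$ integral forces $c_k = 0$. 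The $A_{1,2}$ integral then yields $\delta_{a_1 c_2,\, m_1}$, giving $c_2 = m_1/a_1$ with $a_1\mid m_1$. With $c_3 = \cdots = c_n = 0$, the integrand is independent of the remaining $A_{i,\ell}$ for $2\le i<\ell$, so the characters $e(-m_i A_{i,i+1})$ for $i\ge 2$ integrate to $\prod_{i=2}^{n-1}\delta_{m_i,0}$; this is the vanishing claim that $\hat\phi^2_{(m_1,\ldots,m_{n-1})} = 0$ unless $m_2 = \cdots = m_{n-1} = 0$.

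For the explicit formulas, in the case $m_1 \ne 0$ we sum over $a_1\mid m_1$ with $c_2 = m_1/a_1$ and read off $C|\xi| = |a_1|y_1\cdots y_{n-1}\cdot |m_1|/(|a_1| y_1\cdots y_{n-2}) = |m_1|y_{n-1}$; the exponent of $|a_1|$ works out to $n-1-ns$, producing $\sigma_{-ns+n-1}(|m_1|)$, and combining the scaling Jacobian $\prod_{i=2}^n(y_1\cdots y_{n-i})^{-1}$ with $C^{(n-1-ns)/2}$ and $|\xi|^{(ns-n+1)/2}$ yields the stated monomial in $y_1,\ldots,y_{n-1}$. In the case $m_1 = 0$ we additionally have $c_2 = 0$, so $\xi = 0$ and the Bessel integral degenerates into the beta-type integral $\int_{\bbR^{n-1}} (C^2+|\beta|^2)^{-ns/2}\,d\beta = C^{n-1-ns}\pi^{(n-1)/2}\Gamma(ns/2-(n-1)/2)/\Gamma(ns/2)$; the outer sum $\sum_{a_1\ne 0}|a_1|^{n-1-ns}$ produces $\zeta(ns-n+1)$, matching the stated formula. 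The principal obstacle is purely algebraic bookkeeping: correctly matching $(A^{-1})_{1,i}$ to the $M_k$'s in the identity for $M_1$ and tracking the powers of the $y_k$ through all the scalings.
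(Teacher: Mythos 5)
Your argument is correct and is essentially the paper's proof: both use the shift $u\mapsto u\cdot x$ together with triangular changes of variables in the Poisson-dual parameters so that the first-row unipotent integrals, combined with $a_1\neq 0$, force $c_n=\cdots=c_3=0$ and $a_1c_2=m_1$, after which the surviving integrand no longer involves $u_{i,j}$ for $i\ge 2$ (whence $m_2=\cdots=m_{n-1}=0$) and the remaining archimedean integral yields the divisor sum, the power of $y$, and the $K$-Bessel (resp. Gamma) factor. The only differences are presentational: you perform the substitutions $\tilde a=aA$ and the first-row integrations in one batch and invoke the $(n-1)$-dimensional Fourier transform of $(C^2+|\beta|^2)^{-ns/2}$ in a single step, whereas the paper interleaves the substitutions $b_i'\mapsto a_i$ with the successive $u_{1,i}$-integrations and iterates the two one-dimensional integral identities.
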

\begin{proof}
Let $x'$ denote the product of matrices $u\cdot x$. Define $b_i'$ to be the same as $b_i$ with $x$ replaced by $x'$. We now make the change of variable $b_n'\mapsto a_n$:
\beq
\hat{\phi}_{(m_1,\ldots,m_{n-1})}^2(z,s)&=& \int_0^1\cdots\int_0^1 \sum_{a_1\neq 0}\sum_{(c_2,\ldots,c_n)\in \bbZ^{n-1}}\int_{-\infty}^\infty\cdots \int_{-\infty}^\infty \\
&&\frac{e\lp - c_2a_2-\cdots - c_n(a_n-a_1x_{1,n}'-\ldots-a_{n-1}x_{n-1,n}')\rp}{({b'}_1^2+\cdots+ a_n^2)^{ns/2}} da_2\cdots da_n\\
&&\times e(-m_1u_{1,2}-m_2u_{2,3}-\cdots-m_{n-1}u_{n-1,n}) \prod _{1\le i<j\le n} d{u_{i,j}}.
\eeq
We notice that the only entry with $u_{1,n}$ in the matrix $x'$ is the $x'_{1,n}$ entry. As $a_1\neq0$, the integral against $u_{1,n}$ gives $0$ unless $c_n=0$. After this simplification we have
\beq
\hat{\phi}_{(m_1,\ldots,m_{n-1})}^2(z,s)& = & \int_0^1\cdots\int_0^1  \sum_{a_1\neq 0}\sum_{(c_2,\ldots,c_{n-1})\in \bbZ^{n-2}} \int_{-\infty}^\infty\cdots \int_{-\infty}^\infty \frac{e\lp - c_2a_2-\cdots - c_{n-1}a_{n-1}\rp}{({b'}_1^2+\cdots+ a_n^2)^{ns/2}} da_2\cdots da_n   \\ &&\times e(-m_1u_{1,2}-m_2u_{2,3}-\cdots-m_{n-1}u_{n-1,n})\frac{\prod _{1\le i<j\le n} d{u_{i,j}}}{du_{1,n}}
\eeq
Now make the change of variable $ \frac{ b'_{n-1}}{y_1}\mapsto a_{n-1}$:
\beq
\hat{\phi}_{(m_1,\ldots,m_{n-1})}^2(z,s) & = & \int_0^1\cdots\int_0^1  \sum_{a_1\neq 0}\sum_{(c_2,\ldots,c_{n-1})\in \bbZ^{n-2}} \int_{-\infty}^\infty\cdots \int_{-\infty}^\infty  \\ &&\times\frac{e\lp - c_2a_2-\cdots - c_{n-1}(a_{n-1}-a_1{x'}_{1,n-1}-\cdots- a_{n-2}{x'}_{n-2,n-1})\rp}{({b'}_1^2+\cdots+a_{n-1}^2y_1^2+ a_n^2)^{ns/2}} da_2\cdots da_n \\&&\times e(-m_1u_{1,2}-m_2u_{2,3}-\cdots-m_{n-1}u_{n-1,n})\frac{\prod _{1\le i<j\le n} d{u_{i,j}}}{du_{1,n}}
\eeq
The only term with $u_{1,n-1}$ is in the entry ${x'}_{1,n-1}$. Integrating against $u_{1,n-1}$ with the fact that $a_1\neq0$ shows $c_{n-1}=0$. Continuing in this fashion, we get
\begin{eqnarray}
\label{poisson}\hat{\phi}_{(m_1,\ldots,m_{n-1})}^2(z,s) & = & \int_0^1\cdots\int_0^1  \sum_{a_1\neq 0}\sum_{c_2\in \bbZ} \int_{-\infty}^\infty\cdots \int_{-\infty}^\infty \frac{e\lp - c_2(a_2-a_1{x}_{1,2}-a_1 u_{1,2})\rp}{({a}_1^2y_1^2\cdots y_{n-1}^2+\cdots+ a_n^2)^{ns/2}} da_2\cdots da_n   \\ &&\times  e(-m_1u_{1,2}-m_2u_{2,3}-\cdots-m_{n-1}u_{n-1,n})\frac{\prod _{1\le i<j\le n} d{u_{i,j}}}{du_{1,n}\cdots du_{1,3}}. \nonumber
\end{eqnarray}
Clearly, this is zero unless $m_2=\cdots =m_{n-1}=0$.

To continue, we will use the following well-known identities:
\begin{eqnarray}
\int_{-\infty}^\infty (A^2x^2+C)^{-\nu} dx&=&\sqrt{\pi}\frac{\Gamma\lp\nu-\frac{1}{2}\rp}{\Gamma(\nu)}|A|^{-1}C^{-\nu+\frac{1}{2}}\label{eqn1},\\
\int_{-\infty}^\infty (A^2x^2+C^2)^{-\nu}e(-Dx) dx& =& \frac{2\pi^\nu}{\Gamma({\nu})}|A|^{-\nu-\frac{1}{2}}|D|^{\nu-\frac{1}{2}}|C|^{\frac{1}{2}-\nu} K_{\nu-\frac{1}{2}}\lp\frac{2\pi |CD|}{A}\rp,\label{eqn2}
\end{eqnarray}

where $A,C,D\in\bbR$. Continuing from \ref{poisson}, we have
\beq
\hat{\phi}_{(m_1,0\ldots,0)}^2(z,s) &=& \int_0^1  \sum_{a_1\neq 0}\sum_{c_2\in \bbZ} \int_{-\infty}^\infty\cdots \int_{-\infty}^\infty \frac{e\lp - c_2(a_2-a_1{x}_{1,2}-a_1 u_{1,2})\rp}{({a}_1^2y_1^2\cdots y_{n-1}^2+\cdots+ a_n^2)^{ns/2}} da_2\cdots da_n   \  e(-m_1u_{1,2}) du_{1,2}.
\eeq

For $m_1=0$,
\beq
\hat{\phi}_{(0,0\ldots,0)}^2(z,s) &=&   \int_0^1  \sum_{a_1\neq 0}\sum_{c_2\in \bbZ} \int_{-\infty}^\infty\cdots \int_{-\infty}^\infty \frac{e\lp - c_2(a_2-a_1{x}_{1,2}-a_1 u_{1,2})\rp}{({a}_1^2y_1^2\cdots y_{n-1}^2+\cdots+ a_n^2)^{ns/2}} da_2\cdots da_n\ du_{1,2}\\
&=&\sum_{a_1\neq0}\int_{-\infty}^\infty\cdots \int_{-\infty}^\infty \frac{1}{({a}_1^2y_1^2\cdots y_{n-1}^2+\cdots+ a_n^2)^{ns/2}} da_2\cdots da_n\\
&=&\frac{2\pi^{\frac{n-1}{2}} \zeta(ns-n+1) \Gamma\lp\frac{ns}{2}-\frac{n-1}{2}\rp}{\Gamma\lp\frac{ns}{2}\rp} (y_1^{n-2}\cdots y_{n-2})^{-1} (y_1\cdots y_{n-1})^{-ns+n-1} 
\eeq
after repeatedly applying \ref{eqn1}. For $m_1\neq 0$,
\beq
\hat{\phi}_{(m_1,0\ldots,0)}^2(z,s) &=& \int_0^1  \sum_{a_1\neq 0}\sum_{c_2\in \bbZ} \int_{-\infty}^\infty\cdots \int_{-\infty}^\infty \frac{e\lp - c_2(a_2-a_1{x}_{1,2}-a_1 u_{1,2})\rp}{({a}_1^2y_1^2\cdots y_{n-1}^2+\cdots+ a_n^2)^{ns/2}} da_2\cdots da_n   \  e(-m_1u_{1,2}) du_{1,2}\\
&=&e(m_1 x_{1,2})\doublesum_{\substack{a_1\neq 0 c_2\in \bbZ \\a_1c_2=m_1}} \int_{-\infty}^\infty\cdots \int_{-\infty}^\infty   \frac{e\lp - c_2a_2\rp}{({a}_1^2y_1^2\cdots y_{n-1}^2+\cdots+ a_n^2)^{ns/2}} da_2\cdots da_n\\
&=& e(m_1x_{1,2})\frac{2\pi^{\frac{ns}{2}}}{\Gamma\lp\frac{ns}{2}\rp}|m_1|^{\frac{ns}{2}-\frac{n-1}{2}}\sigma_{-ns+n-1}(|m_1|)(y_1^{n-3}\cdots y_{n-3})^{-1} (y_1\cdots y_{n-2})^{-(ns-n+2)}\\
&&\times y_{n-1}^{-\frac{ns}{2}+\frac{n-1}{2}} K_{\frac{ns}{2}-\frac{n-1}{2}}(2\pi |m_1|y_{n-1}).
\eeq
\end{proof}

Finally the calculation of the Fourier expansion is complete after a simple induction on $n$ using equation \ref{induction}.

\vspace{20mm}

\subsection{Constant Term Computation}
In this section, we utilize the Fourier expansion computation to investigate the following two constant terms:
\beq
\int_{(\bbZ\backslash \bbR)^{n-1}}    E_{(n-1,1)}\lp z,s,1   \rp \  \prod_{h=1}^{n-1} dx_{h,n},
\eeq
\beq
\int_{(\bbZ\backslash \bbR)^{n-1}}     \lbar   E_{(n-1,1)}\lp z,\frac{1}{2}+it,1\rp     \rbar^2 \  \prod_{h=1}^{n-1} dx_{h,n}
\eeq
to prepare for calculation in next section. First, we need a lemma.

\begin{lem} \label{action}
Let $\gamma_k = \bemfive &&&&\\&&*&&\\&&&&\\ \gamma_{k,1}&\gamma_{k,2}&\cdots&\gamma_{k,k-1}&\gamma_{k,k}\enm\in SL_k(\bbZ)$ and let 
\beq
z'= \bemfive 1 & x'_{1,2}& x'_{1,3}&\cdots& x'_{1,n}\\  &1&x'_{2,3}&\cdots & x'_{2,n} \\ &&\ddots&&\vdots\\ &&&1& x'_{n-1,n}\\&&&&1\enm\cdot \bemfive   y'_1y'_2\cdots y'_{n-1} &&&&\\  & y_1y_2\cdots y_{n-2}&&&\\ &&\ddots&&\\ &&&y_1&\\&&&&1\enm
\eeq
be the product $\bemtwo \gamma_k& \\ & I_{n-k}\enm \cdot z$ in Iwasawa form, then
\begin{enumerate}
\item 
\beq
y_j'=y_j\hspace{10mm} \mbox{ for }1\le j\le n-k-1;
\eeq
\item 
\beq
{y'}_{n-k}^{k}{y'}_{n-(k-1)}^{k-1}\cdots y'_{n-1} = {y}_{n-k}^{k}{y}_{n-(k-1)}^{k-1}\cdots y_{n-1};
\eeq
\item
\beq
y'_{n-k} = y_{n-k}\lp b_1^2+\cdots + b_{k}^2\rp^{\frac{1}{2}}
\eeq
where
\beq
b_1&=& \gamma_{k,1} y_{n-k+1}\cdots y_{n-1}\\
b_2 &= &(\gamma_{k,1}x_{1,2}+\gamma_{k,2})y_{n-k+1}\cdots y_{n-2}\\
&\vdots&\\
b_{k-1} &=&(\gamma_{k,1}x_{1,k-1}+\gamma_{k,2}x_{2,k-1}+\cdots \gamma_{k,k-2}x_{k-2,k-1}+\gamma_{k,k-1})y_{n-k+1}\\
b_{k} &=&\gamma_{k,1}x_{1,k}+\gamma_{k,2}x_{2,k}+\cdots \gamma_{k,k-1}x_{k-1,k}+\gamma_{k,k}.
\eeq
\item \beq
x'_{k,k+1} = \sum_{i=1}^k \gamma_{k,i} x_{i,k+1}.
\eeq
\end{enumerate}
\end{lem}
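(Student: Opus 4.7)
The plan is to exploit the block-upper-triangular structure induced by the embedding $\bemtwo \gamma_k & \\ & I_{n-k}\enm$, which acts only on the top $k$ rows of $z$. First I would write $z = xy$ and split into $k\times k$ and $(n-k)\times(n-k)$ blocks
\[
z = \bemtwo z^{(1)} & x^{(12)} y^{(2)} \\ 0 & x^{(2)} y^{(2)} \enm, \qquad z^{(1)} = x^{(1)} y^{(1)},
\]
where $y^{(1)}, y^{(2)}$ denote the diagonal blocks of $y$. Then $\bemtwo \gamma_k & \\ & I_{n-k}\enm$ leaves the lower-right block $x^{(2)} y^{(2)}$ untouched and multiplies the upper block on the left by $\gamma_k$. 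Next I would apply the $GL_k$ Iwasawa decomposition $\gamma_k z^{(1)} = \nu\, a\, \kappa$ with $\nu\in N_k(\bbR)$, $a$ positive diagonal, $\kappa\in O_k(\bbR)$, and absorb $\bemtwo \kappa & \\ & I_{n-k}\enm \in O_n(\bbR)$ to arrive at the full Iwasawa form $z' = x'y'$ with
\[
x' = \bemtwo \nu & \gamma_k x^{(12)} \\ 0 & x^{(2)} \enm, \qquad y' = \bemtwo a & \\ & y^{(2)} \enm.
\]

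From this factorization parts (1) and (4) would follow immediately: the lower-right block of $y'$ equals $y^{(2)}$, forcing the diagonal entries $Y'_j = Y_j$ for $j \le n-k$ and hence $y'_j = y_j$ for $1 \le j \le n-k-1$; and $x'_{k,k+1}$ is the $(k,1)$ entry of the upper-right block $\gamma_k x^{(12)}$, which equals $\sum_{i=1}^k \gamma_{k,i} x_{i,k+1}$ since $(\gamma_{k,1},\ldots,\gamma_{k,k})$ is the last row of $\gamma_k$. For part (2) I would use $\det\gamma_k = 1$ and $|\det\kappa| = 1$ (with $\det a > 0$) to conclude $\det a = \det y^{(1)}$, then expand $\det y^{(1)} = \prod_{j=1}^k Y_{n-j+1} = (y_1\cdots y_{n-k-1})^k \cdot y_{n-k}^k y_{n-k+1}^{k-1}\cdots y_{n-1}$ and cancel the common factor $(y_1\cdots y_{n-k-1})^k$ via part (1).

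Part (3) is the most computational step, where I plan to invoke the standard fact that in any $GL_k$ Iwasawa decomposition $M = \nu\, a\, \kappa$, the $(k,k)$ entry of $a$ equals the Euclidean norm of the last row of $M$ (the last row of $\nu$ is $(0,\ldots,0,1)$ and $\kappa\in O_k$ preserves norms). Applying this to $M = \gamma_k z^{(1)}$ requires computing its last row explicitly; since the $(i,j)$ entry of $z^{(1)}$ equals $x_{i,j}Y_{n-j+1}$ for $i \le j$, that last row is the linear combination of the first $k$ rows of $z^{(1)}$ weighted by $\gamma_{k,1},\ldots,\gamma_{k,k}$. Factoring out the common $Y_{n-k+1}$ from each entry via $Y_{n-j+1}/Y_{n-k+1} = y_{n-k+1}\cdots y_{n-j}$ would produce exactly the vector $Y_{n-k+1}(b_1, b_2, \ldots, b_k)$ from the statement, so $Y'_{n-k+1} = Y_{n-k+1}(b_1^2 + \cdots + b_k^2)^{1/2}$; part (1) would then translate this ratio into $y'_{n-k} = y_{n-k}(b_1^2 + \cdots + b_k^2)^{1/2}$.

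The main obstacle will be purely bookkeeping: translating between the $y_j$-parameters and the actual diagonal entries $Y_j = y_1\cdots y_{j-1}$ of $y$, and keeping careful track of which entry of which block each formula refers to. Once the block decomposition and the row-norm identity for the $GL_k$ Iwasawa decomposition are in hand, the rest is elementary algebra.
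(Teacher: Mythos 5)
Your proposal is correct and follows essentially the same route as the paper: split $z$ into blocks, apply the $GL_k$ Iwasawa decomposition to the top-left block, read off parts (1) and (4) from the block structure, and get part (2) from invariance of the determinant. You actually supply more detail than the paper does for part (3) — the paper defers the explicit coordinate computation to Goldfeld (p.~309), whereas your row-norm identity for the bottom row of $\gamma_k z^{(1)}$ gives a complete and correct derivation of the formula for $y'_{n-k}$.
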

\begin{proof}
First we write
\beq
x = \bemtwo X_1&V\\ &X_2\enm && y= \bemtwo Y_1 y_1\cdots y_{n-k-1}&\\& Y_2\enm=\bemtwo \tilde{Y_1} y_1\cdots y_{n-k}&\\& Y_2\enm,
\eeq
where $x\cdot y$ is the Iwasawa decomposition of $z$ and $X_1,Y_1$ are of dimension $k\times k$, $X_2,Y_2$ are of dimension $(n-k)\times(n-k)$.
Matrix multiplication gives
\beq
\bemtwo \gamma_k&\\&I_{n-k}\enm  \bemtwo X_1&V\\ &X_2\enm \bemtwo Y_1 y_1\cdots y_{n-k-1}&\\& Y_2\enm =  \bemtwo\gamma_k \cdot X_1 \cdot Y_1&\gamma_k \cdot V \\&X_2\enm \bemtwo  I_k y_1\cdots y_{n-k-1}&\\ Y_2\enm.
\eeq
By the Iwasawa decomposition, the matrix $\gamma_i\cdot X_1\cdot Y_1$ can be written as $X_1'\cdot Y_1'\cdot K_1'$. So
\beq
\bemtwo \gamma_k&\\&I_{n-k}\enm z =  \bemtwo X_1'& \gamma_k\cdot V\\& X_2 \enm    \bemtwo Y_1' y_1\cdots y_{n-k-1}&\\& Y_2\enm  \bemtwo K'&  \\&1\enm.
\eeq
We have put $\bemtwo \gamma_k&\\&I_{n-k}\enm z $ in Iwasawa form. Note that $\det(Y_1)=\det(Y_1') = y_{n-k}^{k}y_{n-(k-1)}^{k-1}\cdots y_{n-1}$ is invariant under the action of $\gamma_k$ as $\gamma_k\in SL_k(\bbZ)$ has determinant $1$. At the same time, the variables $y_1,y_2,\ldots,y_{n-k-1}$ are unchanged.

The proof of the final part of this lemma is presented on P.309 of \cite{Go}. 
\end{proof}

\begin{prop}\label{constantterm}
\begin{eqnarray}
\nonumber&&\int_{(\bbZ\backslash \bbR)^{n-1}}    E_{(n-1,1)}\lp z,s,1   \rp \  \prod_{h=1}^{n-1} dx_{h,n}\\
&=&2\lp y_1^{n-1}\cdots y_{n-1}\rp^s + \frac{\xi(ns-1)}{\xi(ns)} \lp y_1^{n-1}\cdots y_{n-1}\rp^{\frac{1-s}{n-1}}E_{(n-2,1)}\lp \frak{m}_{n-1}(z),\frac{ns-1}{n-1},1   \rp.
\end{eqnarray}
\end{prop}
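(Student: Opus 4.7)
The plan is to work from the Epstein zeta representation
\[
\zeta(ns)\,E_{(n-1,1)}(z,s,1) = \bigl(y_1^{n-1}y_2^{n-2}\cdots y_{n-1}\bigr)^{s}\sum_{(a_1,\ldots,a_n)\in\bbZ^n\setminus\{0\}}\bigl(b_1^2+\cdots+b_n^2\bigr)^{-ns/2}
\]
and to exploit the crucial fact that the integration variables $x_{1,n},\ldots,x_{n-1,n}$ enter only through $b_n = a_1x_{1,n}+\cdots+a_{n-1}x_{n-1,n}+a_n$ and not through $b_1,\ldots,b_{n-1}$. I would split the lattice sum according to whether $a_1=\cdots=a_{n-1}=0$, in which case $a_n\neq0$ and $b_1=\cdots=b_{n-1}=0$ yields $\sum_{a_n\neq0}|a_n|^{-ns}=2\zeta(ns)$, producing after cancellation of $\zeta(ns)$ exactly the $2(y_1^{n-1}\cdots y_{n-1})^s$ summand of the claim.

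For the remaining case, in which at least one $a_h\neq0$ with $h<n$, I would apply Poisson summation in $a_n$ to rewrite the inner sum as
\[
\sum_{c_n\in\bbZ} e(c_nL)\int_{\bbR}\bigl(c^2+u^2\bigr)^{-ns/2}e(-c_nu)\,du,
\]
where $c^2 := b_1^2+\cdots+b_{n-1}^2$ is independent of the $x_{h,n}$'s and $L := \sum_{h=1}^{n-1} a_hx_{h,n}$. Integrating $e(c_nL)$ over $[0,1]^{n-1}$ produces $\prod_h\delta_{c_na_h=0}$, so the presence of some nonzero $a_h$ forces $c_n=0$, and the surviving one-dimensional integral is evaluated by identity (\ref{eqn1}) to $\sqrt{\pi}\,\Gamma((ns-1)/2)\Gamma(ns/2)^{-1}c^{-(ns-1)}$.

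It remains to identify the resulting sum $\sum_{(a_1,\ldots,a_{n-1})\neq0}c^{-(ns-1)}$ with a $GL(n-1)$ Epstein zeta. Writing $\frak{m}_{n-1}(z)$ in Iwasawa form as $z''\in X_{n-1}$, with $y''_k=y_{k+1}$ for $1\le k\le n-2$ and $x''_{i,j}=x_{i,j}$ for $1\le i<j\le n-1$, a direct comparison of the $b_i$ with the analogous $b'_i$ for $z''$ shows $b_i=y_1 b'_i$, whence $c=y_1c'$; substituting the Epstein representation of $E_{(n-2,1)}(z'',(ns-1)/(n-1),1)$ (with $(n-1)\cdot(ns-1)/(n-1)=ns-1$ making the exponents line up) yields the second summand of the proposition. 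The step requiring the most care is the final bookkeeping: tracking the factor $y_1^{-(ns-1)}$ produced by the rescaling $y\mapsto y''$, verifying that the remaining $y_i$-powers collapse to $(y_1^{n-1}\cdots y_{n-1})^{(1-s)/(n-1)}$, and confirming that $\pi^{1/2}\Gamma((ns-1)/2)\zeta(ns-1)/[\Gamma(ns/2)\zeta(ns)]$ equals $\xi(ns-1)/\xi(ns)$ under the convention $\xi(s)=\pi^{-s/2}\Gamma(s/2)\zeta(s)$.
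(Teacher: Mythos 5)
Your argument is correct, but it takes a genuinely different route from the paper's. The paper deduces the proposition from the already-computed Fourier--Whittaker expansion of Theorem \ref{prop3}: by Lemma \ref{action} the variables $x_{1,n},\ldots,x_{n-1,n}$ occur only in the $i=n-1$ block of that expansion, and there only inside the exponential $e\lp\sum_j\gamma_{n-1,j}x_{j,n}\rp$, so that block integrates to zero; the surviving terms (the $m_1$-sum together with the blocks $2\le i\le n-2$) are then identified with the Fourier expansion of $2\lp y_1^{n-1}\cdots y_{n-1}\rp^s+\frac{\xi(ns-1)}{\xi(ns)}\lp y_1^{n-1}\cdots y_{n-1}\rp^{\frac{1-s}{n-1}}E_{(n-2,1)}\lp\frak{m}_{n-1}(z),\frac{ns-1}{n-1},1\rp$. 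You instead return directly to the Epstein zeta representation and perform a single Poisson summation in $a_n$, exploiting that $x_{1,n},\ldots,x_{n-1,n}$ enter only through $b_n$; the $x$-integration forces the dual variable $c_n=0$, identity (\ref{eqn1}) yields the factor $\sqrt{\pi}\,\Gamma\lp\frac{ns-1}{2}\rp/\Gamma\lp\frac{ns}{2}\rp$, and the relation $b_i=y_1b_i'$ identifies the remaining $(n-1)$-dimensional sum with the Epstein representation of $E_{(n-2,1)}$ at parameter $\frac{ns-1}{n-1}$. Your bookkeeping checks out: the $y_1$-exponent $(n-1)s-(ns-1)=1-s$ and the $y_k$-exponent $(n-k)\lp s-\frac{ns-1}{n-1}\rp=(n-k)\frac{1-s}{n-1}$ for $k\ge2$ both match $\lp y_1^{n-1}\cdots y_{n-1}\rp^{\frac{1-s}{n-1}}$, and $\pi^{1/2}\Gamma\lp\frac{ns-1}{2}\rp\zeta(ns-1)/\lp\Gamma\lp\frac{ns}{2}\rp\zeta(ns)\rp$ is exactly $\xi(ns-1)/\xi(ns)$. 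What your route buys is a self-contained computation that avoids the somewhat implicit ``comparing Fourier expansions'' step of the paper; what it gives up is the reuse of Theorem \ref{prop3}, which the paper needs anyway for Proposition \ref{constantsquare}. Both arguments are valid for $\re(ns)$ large and extend by meromorphic continuation.
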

\begin{proof}
By Theorem \ref{prop3}
\beq
&&\int_{(\bbZ\backslash \bbR)^{n-1}}    E_{(n-1,1)}\lp z,s,1   \rp \  \prod_{h=1}^{n-1} dx_{h,n}\\
&=&\int_{(\bbZ\backslash \bbR)^{n-1}} \lp  \sum_{m_1\in\bbZ}\hat{\phi}_{(m_1,0,\ldots,0)}(z)+\sum_{i=2}^{n-1}\sum_{\gamma_i\in P_{(i-1,1)}(\bbZ)\backslash SL_i(\bbZ)} \sum_{m_i=1}^\infty \hat{\phi}_{(0,\ldots,0,m_i,0\ldots,0)}\lp\bemtwo \gamma_i &\\ & I_{n-i}\enm z\rp \rp \  \prod_{h=1}^{n-1} dx_{h,n}
\eeq
By the previous lemma, we see that only the term 
\beq
\sum_{\gamma_i\in P_{(i-1,1)}(\bbZ)\backslash SL_i(\bbZ)} \sum_{m_i=1}^\infty \hat{\phi}_{(0,\ldots,0,m_i,0\ldots,0)}\lp\bemtwo \gamma_i &\\ & I_{n-i}\enm z\rp
\eeq
has the variables $x_{h,n}$ for $1\le h\le n-1$ and these variables only appears in the exponential factor as $e\lp \sum_{j=1}^{n-1}\gamma_{n-1,j}x_{j,n}\rp$, where $\gamma_{a,b}$ are entries of the matrix $\gamma_{n-1}$.
Thus
\beq
\int_{(\bbZ\backslash \bbR)^{n-1}}   \sum_{\gamma_i\in P_{(i-1,1)}(\bbZ)\backslash SL_i(\bbZ)} \sum_{m_i=1}^\infty \hat{\phi}_{(0,\ldots,0,m_i,0\ldots,0)}\lp\bemtwo \gamma_i &\\ & I_{n-i}\enm z\rp  \  \prod_{h=1}^{n-1} dx_{h,n}=0.
\eeq
So we have
\beq
&&\int_{(\bbZ\backslash \bbR)^{n-1}}    E_{(n-1,1)}\lp z,s,1   \rp \  \prod_{h=1}^{n-1} dx_{h,n}\\
&=& \sum_{m_1\in\bbZ}\hat{\phi}_{(m_1,0,\ldots,0)}(z)+\sum_{i=2}^{n-2}\sum_{\gamma_i\in P_{(i-1,1)}(\bbZ)\backslash SL_i(\bbZ)} \sum_{m_i=1}^\infty \hat{\phi}_{(0,\ldots,0,m_i,0\ldots,0)}\lp\bemtwo \gamma_i &\\ & I_{n-i}\enm z\rp\\
&=&2\lp y_1^{n-1}\cdots y_{n-1}\rp^s + \frac{\xi(ns-1)}{\xi(ns)} \lp y_1^{n-1}\cdots y_{n-1}\rp^{\frac{1-s}{n-1}}E_{(n-2,1)}\lp \frak{m}_{n-1}(z),\frac{ns-1}{n-1},1   \rp
\eeq
by comparing Fourier expansions. 
\end{proof}

\begin{prop}\label{constantsquare}
\begin{eqnarray*}&&\int_{(\bbZ\backslash \bbR)^{n-1}}     \lbar   E_{(n-1,1)}\lp z,s,1\rp     \rbar^2 \  \prod_{h=1}^{n-1} dx_{h,n}\\
\nonumber&=& 4 \ell^{-2na} +\frac{\lbar \xi(ns-1) \rbar^2}{\lbar \xi(ns) \rbar^2}\lp  y_1^{n-1}\cdots y_{n-1}\rp^{\frac{2-a}{n-1}}  \lbar E_{(n-2,1)}\lp \frak{m}_{n-1}(z),\frac{ns-1}{n-1} ,1   \rp\rbar^2  \\
\nonumber&&+2\ell^{-\frac{n^2a-2na+n+in^2b}{n-1}}\frac{\xi(n\bar{s}-1)}{\xi(n\bar{s})} E_{(n-2,1)}\lp \frak{m}_{n-1}(z),\frac{n\bar{s}-1}{n-1},1   \rp\\
\nonumber&&+2\ell^{-\frac{n^2a-2na+n-in^2b}{n-1}}\frac{\xi(ns-1)}{\xi(ns)} E_{(n-2,1)}\lp \frak{m}_{n-1}(z),\frac{ns-1}{n-1},1   \rp\\
\nonumber&&+\frac{8\ell^{-2na}}{\lbar\xi \lp s\rp\rbar^2}\sum_{m_{n-1}=1}^\infty\sum_{\gamma\in P_{(n-2,1)}(\bbZ)\backslash SL_{n-1}(\bbZ)} m_{n-1}^{na-1}\sigma_{-na+1-inb}(m_{n-1})\sigma_{-na+1+inb}(m_{n-1}) \lp\frac{\ell^{-\frac{n}{n-1}}}{\det(\frak{m}_{n-1}(z))^{\frac{1}{n-1}}}\rp^{-na+1}\\
\nonumber&&\times \ \left.K_{\frac{na-1}{2}+\frac{inb}{2}}\lp2\pi m_{n-1}\frac{\ell^{-\frac{n}{n-1}}}{\det(\frak{m}_{n-1}(z))^{\frac{1}{n-1}}}\rp K_{\frac{na-1}{2} -\frac{inb}{2}}\lp2\pi m_{n-1}\frac{\ell^{-\frac{n}{n-1}}}{\det(\frak{m}_{n-1}(z))^{\frac{1}{n-1}}}\rp \right|_\gamma
\end{eqnarray*}
where $s=a+bi$, $\ell^{-n} = y_1^{n-1}\cdots y_{n-1}$ and $\det(\frak{m}_{n-1}(z)) = y_2^{n-2}\cdots y_{n-1}$.
\end{prop}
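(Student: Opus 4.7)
The plan is to combine the Fourier-Whittaker expansion of $E_{(n-1,1)}(z,s,1)$ from Theorem \ref{prop3} with the coordinate transformation in Lemma \ref{action}, and then reduce $\int \lbar E \rbar^2 \prod dx_{h,n}$ to two separate pieces via Fourier orthogonality in the $x_{h,n}$ variables. First I split the Fourier expansion as $E_{(n-1,1)}(z,s,1)=F_1(z)+F_2(z)$, where
\beq
F_1(z) &=& \sum_{m_1\in\bbZ}\hat\phi_{(m_1,0,\ldots,0)}(z)+\sum_{i=2}^{n-2}\sum_{\gamma_i\in P_{(i-1,1)}(\bbZ)\backslash SL_i(\bbZ)}\sum_{m_i=1}^\infty\hat\phi_{(0,\ldots,m_i,\ldots,0)}\lp\bemtwo\gamma_i&\\&I_{n-i}\enm z\rp,\\
F_2(z) &=& \sum_{\gamma_{n-1}\in P_{(n-2,1)}(\bbZ)\backslash SL_{n-1}(\bbZ)}\sum_{m_{n-1}=1}^\infty\hat\phi_{(0,\ldots,0,m_{n-1})}\lp\bemtwo\gamma_{n-1}&\\&1\enm z\rp.
\eeq
By the explicit formulas in Theorem \ref{prop3} and Lemma \ref{action}(4), every term of $F_1$ is constant in each $x_{h,n}$, while every summand of $F_2$ carries the exponential $e\lp m_{n-1}\sum_{j=1}^{n-1}\gamma_{n-1,j}x_{j,n}\rp$, whose linear form in the $x_{h,n}$ is never identically zero (the bottom row of any $SL_{n-1}(\bbZ)$-representative is primitive and $m_{n-1}\ge1$).

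Expanding $\lbar E\rbar^2=\lbar F_1\rbar^2+F_1\overline{F_2}+\overline{F_1}F_2+\lbar F_2\rbar^2$, the two cross terms integrate to zero by the nonvanishing of those linear forms. The piece $\int\lbar F_1\rbar^2\prod dx_{h,n}=\lbar F_1\rbar^2$, and I identify $F_1$ with the constant term computed in Proposition \ref{constantterm}:
\beq
F_1(z)=2\ell^{-ns}+\frac{\xi(ns-1)}{\xi(ns)}\,\ell^{-n(1-s)/(n-1)}\,E_{(n-2,1)}\lp\frak{m}_{n-1}(z),\frac{ns-1}{n-1},1\rp.
\eeq
Squaring and substituting $s=a+bi$ immediately produces the first four terms of the claimed identity after simplifying the exponents of $\ell$.

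For the remaining piece, $\int\lbar F_2\rbar^2\prod dx_{h,n}$, I use standard Fourier orthogonality: for each $j$,
\beq
\int_0^1 e\lp(m\gamma_{n-1,j}-m'\gamma'_{n-1,j})x_{j,n}\rp\,dx_{j,n}=\delta_{m\gamma_{n-1,j}=m'\gamma'_{n-1,j}}.
\eeq
The joint vanishing condition forces the two primitive integer $(n-1)$-tuples $m(\gamma_{n-1,j})_j$ and $m'(\gamma'_{n-1,j})_j$ to coincide; since $m,m'\ge1$, this collapses to $m=m'$ and $\gamma_{n-1}=\gamma'_{n-1}$ as cosets in $P_{(n-2,1)}(\bbZ)\backslash SL_{n-1}(\bbZ)$, reducing the double sum to its diagonal. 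On each surviving diagonal term, the $k=n-1$ case of Theorem \ref{prop3} gives the explicit formula for $\hat\phi_{(0,\ldots,0,m_{n-1})}$, and Lemma \ref{action}(1)–(3) shows that only $y_1$ transforms under $\gamma_{n-1}$, while $\ell^{-n}=y_1^{n-1}\cdots y_{n-1}$ and $\det(\frak{m}_{n-1}(z))=y_2^{n-2}\cdots y_{n-1}$ are invariant. Part (2) of that lemma then yields
\beq
y_1'=\frac{\ell^{-n/(n-1)}}{\det(\frak{m}_{n-1}(z))^{1/(n-1)}},
\eeq
which is exactly the argument appearing in the two $K$-Bessel factors of the claimed expression. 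Taking $|\cdot|^2$ of the explicit Whittaker formula and summing over $(m_{n-1},\gamma_{n-1})$ assembles the last term.

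The main obstacle is the orthogonality bookkeeping in the last step: one has to verify carefully that primitivity of bottom-row representatives, together with the positivity of $m_{n-1}$, rigidly pins down $\gamma_{n-1}=\gamma'_{n-1}$ modulo the sign ambiguity inherent in $P_{(n-2,1)}(\bbZ)\backslash SL_{n-1}(\bbZ)$, and to keep track of the $\ell$– versus $\det(\frak{m}_{n-1}(z))$–exponents after expanding $\lbar F_1\rbar^2$. The rest is a direct substitution using the formulas already established.
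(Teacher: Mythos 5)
Your proposal is correct and follows essentially the same route as the paper: split the Fourier expansion into the part constant in the $x_{h,n}$ (identified with the constant term of Proposition \ref{constantterm}) and the $i=n-1$ part, kill the cross terms and off-diagonal terms by orthogonality in the $x_{h,n}$, square the constant term to get the first four summands, and use Lemma \ref{action} to express the surviving diagonal of the $K$-Bessel sum in terms of $\ell$ and $\det(\frak{m}_{n-1}(z))$. The paper's own proof is just a terser version of the same computation.
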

\begin{proof}
By the previous lemma we can open the square and find

\beq
&&\int_{(\bbZ\backslash \bbR)^{n-1}}     \lbar   E_{(n-1,1)}\lp z,s,1\rp     \rbar^2 \  \prod_{h=1}^{n-1} dx_{h,n}\\
&=&  \lbar 2\lp y_1^{n-1}\cdots y_{n-1}\rp^s + \frac{\xi(ns-1)}{\xi(ns)} \lp y_1^{n-1}\cdots y_{n-1}\rp^{\frac{1-s}{n-1}}E_{(n-2,1)}\lp \frak{m}_{n-1}(z),\frac{ns-1}{n-1},1   \rp\rbar^2\\
&&+\frac{8\lp  y_1^{n-1}\cdots y_{n-1}\rp^{2a}}{\lbar\xi \lp s\rp\rbar^2}\sum_{m_{n-1}=1}^\infty\sum_{\gamma\in P_{(n-2,1)}(\bbZ)\backslash SL_{n-1}(\bbZ)} m_{n-1}^{na-1}\sigma_{-na+1-inb}(m_{n-1})\sigma_{-na+1+inb}(m_{n-1})\\
&&\times \ \left.K_{\frac{na-1}{2}+\frac{inb}{2}}(2\pi m_{n-1}y_1)K_{\frac{na-1}{2} -\frac{inb}{2}}(2\pi m_{n-1}y_1) y_1^{-na+1}\right|_\gamma\\
&=&4\lp  y_1^{n-1}\cdots y_{n-1}\rp^{2a} +\frac{\lbar \xi(ns-1) \rbar^2}{\lbar \xi(ns) \rbar^2}\lp  y_1^{n-1}\cdots y_{n-1}\rp^{\frac{2-a}{n-1}}  \lbar E_{(n-2,1)}\lp \frak{m}_{n-1}(z),\frac{ns-1}{n-1} ,1   \rp\rbar^2  \\
&&+2\lp  y_1^{n-1}\cdots y_{n-1}\rp^{\frac{na-2a+1+inb}{n-1}}\frac{\xi(n\bar{s}-1)}{\xi(n\bar{s})} E_{(n-2,1)}\lp \frak{m}_{n-1}(z),\frac{n\bar{s}-1}{n-1},1   \rp\\
&&+2\lp  y_1^{n-1}\cdots y_{n-1}\rp^{\frac{na-2a+1-inb}{n-1}}\frac{\xi(ns-1)}{\xi(ns)} E_{(n-2,1)}\lp \frak{m}_{n-1}(z),\frac{ns-1}{n-1},1   \rp\\
&&+\frac{8\lp  y_1^{n-1}\cdots y_{n-1}\rp^{2a}}{\lbar\xi \lp s\rp\rbar^2}\sum_{m_{n-1}=1}^\infty\sum_{\gamma\in P_{(n-2,1)}(\bbZ)\backslash SL_{n-1}(\bbZ)} m_{n-1}^{na-1}\sigma_{-na+1-inb}(m_{n-1})\sigma_{-na+1+inb}(m_{n-1})\\
&&\times \ \left.K_{\frac{na-1}{2}+\frac{inb}{2}}(2\pi m_{n-1}y_1)K_{\frac{na-1}{2} -\frac{inb}{2}}(2\pi m_{n-1}y_1) y_1^{-na+1}\right|_\gamma
\eeq

We get our desired result after the change of variable $\ell^{-n} = y_1^{n-1}\cdots y_{n-1}$ and $\det(\frak{m}_{n-1}(z)) = y_2^{n-2}\cdots y_{n-1}$.

\end{proof}

\newpage
\section{Cuspidal and Non-Minimal Eisenstein Contribution}
In this section we show that 
\begin{enumerate}
\item
\beq
\int_{SL_n(\bbZ)\backslash X_n}\phi(z)\lbar E_{n-1,1}\lp z,\frac{1}{2}+it,1\rp\rbar^2d^*z = 0,
\eeq
for $\phi(z)$ a cusp form on $GL(n)$ with $n\ge3$.
\item

\beq
\int_{SL_n(\bbZ)\backslash X_n}E_{(n_1,\ldots,n_r)}(z,\psi,u_1,\ldots,u_r) \lbar E_{n-1,1}\lp z,\frac{1}{2}+it,1\rp\rbar^2d^*z = 0
\eeq
if $n\ge3$ and the partition $(n_1,\ldots,n_r)$ is not of type $(2,1,\ldots,1)$.
\item
\beq
\int_{SL_n(\bbZ)\backslash X_n}E_{(2,1,\ldots,1)}(z,\psi,\phi) \lbar E_{n-1,1}\lp z,\frac{1}{2}+it,1\rp\rbar^2d^*z = O_\epsilon(t^{-\frac{1}{2}+\epsilon}).
\eeq
\end{enumerate}

\vspace{10mm}
\subsection{Some Basic Lemmata}

Here we write down some basic lemmata that we will use repeatedly in the rest of the paper.

\begin{lem}(Stade's formula for $GL(2)$)

The Mellin transform of product of $K$-Bessel functions can be evaluated (6.576.4 of \cite{GR})
 \begin{eqnarray}\label{stade}
 \int_0^\infty K_\mu(y)K_\nu(y)y^s\frac{dy}{y} = 2^{s-3}\frac{\Gamma\lp\frac{s+\mu+\nu}{2}\rp\Gamma\lp\frac{s+\mu-\nu}{2}\rp\Gamma\lp\frac{s-\mu+\nu}{2}\rp\Gamma\lp\frac{s-\mu-\nu}{2}\rp}{\Gamma(s)}.
 \end{eqnarray}
\end{lem}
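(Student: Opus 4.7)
The plan is to derive this identity from Mellin inversion combined with Barnes' first lemma, a standard strategy for Bessel-function Mellin integrals. The key input is the single-variable Mellin transform
\begin{equation*}
\int_0^\infty K_\nu(y)\, y^{s-1}\, dy = 2^{s-2}\,\Gamma\lp\frac{s+\nu}{2}\rp\Gamma\lp\frac{s-\nu}{2}\rp,
\end{equation*}
valid for $\re(s) > |\re(\nu)|$, which I would take as a known identity (it follows quickly from the integral representation $K_\nu(y) = \tfrac{1}{2}(y/2)^\nu\int_0^\infty e^{-t-y^2/(4t)} t^{-\nu-1}\, dt$ after swapping the order of integration). Mellin inversion then gives the Mellin--Barnes representation
\begin{equation*}
K_\mu(y) = \frac{1}{2\pi i}\int_{(c)} 2^{w-2}\,\Gamma\lp\frac{w+\mu}{2}\rp\Gamma\lp\frac{w-\mu}{2}\rp y^{-w}\, dw,
\end{equation*}
which I would substitute as the factor $K_\mu(y)$ in the integral \eqref{stade}.

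Swapping the $y$-integral and the $w$-integral (justified by choosing $c$ in an appropriate vertical strip where both integrals converge absolutely, using the exponential decay of $|\Gamma(\sigma+i\tau)|$ in $|\tau|$) and evaluating the inner $y$-integral with the same Mellin transform applied to $K_\nu$ produces
\begin{equation*}
\int_0^\infty K_\mu(y)K_\nu(y) y^{s-1}\, dy = \frac{2^{s-4}}{2\pi i}\int_{(c)} \Gamma\lp\frac{w+\mu}{2}\rp\Gamma\lp\frac{w-\mu}{2}\rp\Gamma\lp\frac{s-w+\nu}{2}\rp\Gamma\lp\frac{s-w-\nu}{2}\rp dw.
\end{equation*}
After the change of variable $u=w/2$ (which contributes an extra factor of $2$), the right-hand side becomes exactly a Barnes' first lemma integral with parameters $a=\mu/2$, $b=-\mu/2$, $c=(s+\nu)/2$, $d=(s-\nu)/2$. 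Barnes' first lemma,
\begin{equation*}
\frac{1}{2\pi i}\int_{(c)} \Gamma(a+u)\Gamma(b+u)\Gamma(c-u)\Gamma(d-u)\, du = \frac{\Gamma(a+c)\Gamma(a+d)\Gamma(b+c)\Gamma(b+d)}{\Gamma(a+b+c+d)},
\end{equation*}
then immediately yields the claimed product of four Gamma factors in the numerator, $\Gamma(s)$ in the denominator, and the correct constant $2^{s-3}$.

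The only real obstacle is bookkeeping: one must pick $c$ inside the strip $|\re(\mu)| < c < \re(s) - |\re(\nu)|$, which is non-empty precisely when $\re(s) > |\re(\mu)| + |\re(\nu)|$. In this regime all Mellin-Barnes and Fubini manipulations are absolutely convergent. The identity then extends to its full domain of validity by meromorphic continuation in $s,\mu,\nu$. Since this is GR~6.576.4 and plays only an auxiliary computational role here, I would present the derivation in a single compressed paragraph or simply cite it.
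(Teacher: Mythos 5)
Your derivation is correct, but it is worth noting that the paper offers no proof of this lemma at all: it simply cites Gradshteyn--Ryzhik 6.576.4 and moves on, since the formula plays a purely auxiliary role in the later Rankin--Selberg computations. Your Mellin--Barnes argument is a clean, self-contained substitute. I checked the bookkeeping: inserting the Mellin--Barnes representation of $K_\mu$ and integrating $K_\nu(y)y^{s-w-1}$ produces the factor $2^{w-2}\cdot 2^{s-w-2}=2^{s-4}$, the substitution $u=w/2$ contributes the additional factor of $2$ giving $2^{s-3}$, and Barnes' first lemma with $a=\mu/2$, $b=-\mu/2$, $c=(s+\nu)/2$, $d=(s-\nu)/2$ yields exactly the four Gamma factors $\Gamma\left(\frac{s\pm\mu\pm\nu}{2}\right)$ over $\Gamma(a+b+c+d)=\Gamma(s)$, matching the statement. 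The contour condition $|\mathrm{Re}(\mu)|<c<\mathrm{Re}(s)-|\mathrm{Re}(\nu)|$ correctly separates the two families of poles and is non-empty exactly in the stated range $\mathrm{Re}(s)>|\mathrm{Re}(\mu)|+|\mathrm{Re}(\nu)|$, which is also the natural domain of absolute convergence of the left-hand side; meromorphic continuation then handles the general case. What your approach buys is independence from the table reference and a template that generalizes (the same Barnes-lemma mechanism underlies Stade's formula on $GL(n)$); what the paper's citation buys is brevity, which is appropriate given the lemma's role. Either presentation is acceptable here.
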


\begin{lem}\label{divisor} We have the following beautiful formula given by Ramanujan \cite{Ra}.
 \begin{eqnarray}
\sum_{n=1}\frac{\sigma_a(n)\sigma_b(n)}{n^s}= \frac{\zeta(s)\zeta(s-a)\zeta(s-b)\zeta(s-a-b)}{\zeta(2s-a-b)}
 \end{eqnarray}
 for $\re(s)$ large enough.
\end{lem}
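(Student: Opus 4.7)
The plan is to establish Ramanujan's identity by a standard Euler product comparison. Since both $\sigma_a$ and $\sigma_b$ are multiplicative in $n$, their pointwise product is multiplicative, so for $\re(s)$ large enough the left-hand side factors as
\beq
\sum_{n=1}^\infty \frac{\sigma_a(n)\sigma_b(n)}{n^s} \;=\; \prod_p \sum_{k=0}^\infty \frac{\sigma_a(p^k)\sigma_b(p^k)}{p^{ks}}.
\eeq
The right-hand side factors likewise through the Euler product of $\zeta$: with $x = p^{-s}$, its local factor at $p$ is
\beq
\frac{1-p^{a+b-2s}}{(1-p^{-s})(1-p^{a-s})(1-p^{b-s})(1-p^{a+b-s})} \;=\; \frac{1 - p^{a+b}x^2}{(1-x)(1-p^a x)(1-p^b x)(1-p^{a+b}x)}.
\eeq
So the proof reduces to the formal power series identity, at each prime $p$,
\beq
\sum_{k=0}^\infty \sigma_a(p^k)\sigma_b(p^k)\,x^k \;=\; \frac{1 - p^{a+b}x^2}{(1-x)(1-p^a x)(1-p^b x)(1-p^{a+b}x)}.
\eeq

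To check this local identity I would use the closed form $\sigma_a(p^k) = \sum_{i=0}^k p^{ai}$, write
\beq
\sigma_a(p^k)\sigma_b(p^k) = \sum_{i=0}^{k}\sum_{j=0}^{k} p^{ai+bj},
\eeq
and swap the order of summation so that $(i,j)\in\bbZ_{\ge 0}^2$ while $k\ge\max(i,j)$. The inner sum $\sum_{k\ge\max(i,j)} x^k$ is geometric, and after evaluating it the remaining double sum over $i,j$ is handled by splitting into the regions $i\le j$ and $i>j$ and summing two more geometric series. Clearing denominators then produces exactly the rational function above.

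There is no serious analytic obstacle: absolute convergence of the Dirichlet series on the half-plane $\re(s) > \max(1,\,1+\re(a),\,1+\re(b),\,1+\re(a+b))$ justifies the Euler product expansion on the left and the termwise comparison with the Euler product on the right, so the only work is the purely algebraic local check. The main (minor) challenge is the bookkeeping in the local identity; once that is done, the identity extends by meromorphic continuation to wherever both sides are defined.
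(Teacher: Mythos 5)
Your proof is correct and complete in outline. Note that the paper itself offers no proof of this lemma at all --- it simply cites Ramanujan \cite{Ra} --- so there is nothing in the text to compare against; your Euler-product reduction to the local identity
\[
\sum_{k\ge 0}\sigma_a(p^k)\sigma_b(p^k)x^k=\frac{1-p^{a+b}x^2}{(1-x)(1-p^ax)(1-p^bx)(1-p^{a+b}x)},\qquad x=p^{-s},
\]
followed by the geometric-series bookkeeping over the regions $i\le j$ and $i>j$, is the standard and entirely adequate argument. Your stated half-plane of absolute convergence, $\re(s)>\max(1,\,1+\re(a),\,1+\re(b),\,1+\re(a+b))$, is exactly what is needed to justify both the Euler factorization and the termwise comparison, which matches the lemma's hypothesis that $\re(s)$ be large enough.
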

\begin{lem}(Mellin inversion) (See 3.1.1 of \cite{PK})

The Mellin transform of a absolutely integrable function $f(x)$ on $(0,\infty)$ is defined by 
\beq
F(s) = \int_0^\infty x^{s-1}f(x) dx
\eeq
when the integral converges. If we suppose for small $\epsilon>0$
\beq
f(x) = \lbra\begin{array}{lll} O(x^{-a-\epsilon})&\mbox{as}&x\rightarrow0^+\\ O(x^{-b+\epsilon})&\mbox{as}&x\rightarrow\infty, \end{array}\right.
\eeq
then for $a<c<b$, $f(x)$ can be recovered by Mellin inversion:
\beq
f(x) = \frac{1}{2\pi i}\int_{(c)} x^{-s}F(s) ds.
\eeq
\end{lem}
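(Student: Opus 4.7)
The plan is to reduce Mellin inversion to classical Fourier inversion via the logarithmic change of variables $x = e^u$. Since $x^{s-1}\,dx = e^{su}\,du$, the Mellin transform becomes
\[
F(s) \;=\; \int_{-\infty}^{\infty} e^{su} f(e^u)\, du.
\]
Fixing a vertical line $\re(s) = c$ with $a < c < b$ and writing $s = c + it$, this reads
\[
F(c+it) \;=\; \int_{-\infty}^{\infty} e^{itu}\bigl(e^{cu} f(e^u)\bigr)\, du,
\]
so $t \mapsto F(c+it)$ is, in the convention $\widehat g(t) = \int g(u) e^{itu}\,du$, the Fourier transform of $g(u) := e^{cu} f(e^u)$.

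The growth hypothesis on $f$ converts directly into two-sided exponential decay of $g$. The bound $f(x) = O(x^{-a-\epsilon})$ as $x \to 0^+$ gives $g(u) = O(e^{(c-a-\epsilon)u})$ as $u \to -\infty$, which decays because $c > a$; the bound $f(x) = O(x^{-b+\epsilon})$ as $x \to \infty$ gives $g(u) = O(e^{(c-b+\epsilon)u})$ as $u \to +\infty$, which decays because $c < b$. Hence $g \in L^1(\mathbb{R}) \cap L^2(\mathbb{R})$, and the same estimates show that $F$ is analytic in the strip $a < \re s < b$ and decays along vertical lines (once mild regularity on $f$ is imposed, as is implicit in the cited reference). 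Fourier inversion then yields
\[
g(u) \;=\; \frac{1}{2\pi} \int_{-\infty}^{\infty} e^{-itu} F(c+it)\, dt.
\]

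To finish, I would multiply both sides by $e^{-cu}$ and revert via $u = \log x$ and $ds = i\,dt$ on the contour $\re s = c$, obtaining
\[
f(x) \;=\; \frac{1}{2\pi}\int_{-\infty}^{\infty} x^{-c-it} F(c+it)\, dt \;=\; \frac{1}{2\pi i}\int_{(c)} x^{-s} F(s)\, ds,
\]
which is the claimed formula. The only real obstacle is the standard Fourier-inversion subtlety of matching the mode of convergence (pointwise vs.\ $L^2$ vs.\ distributional); under the two-sided power-type growth stated, pointwise recovery requires supplementing with some regularity of $f$ (continuity, or bounded variation near the point of recovery), which is why the result is quoted from \cite{PK}. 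Independence of the formula from the choice of $c$ within the strip follows at once from analyticity of $F$ and a Cauchy-theorem contour shift, using the vertical-line decay noted above.
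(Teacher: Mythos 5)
The paper offers no proof of this lemma; it is quoted verbatim from Paris--Kaminski \cite{PK}, Section 3.1.1, as a standard fact. Your reduction to Fourier inversion via $x=e^u$ is exactly the classical argument underlying that reference, and you correctly flag the one genuine subtlety (pointwise recovery requires continuity or bounded variation of $f$ at the point in question, which the lemma's hypotheses leave implicit), so the proposal is correct and consistent with the source being cited.
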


\vspace{20mm}
\subsection{Partition of the Form \texorpdfstring{$n=n_1+\cdots+n_{r-1}+1$}{Lg}} 
We now fix a partition $n=n_1+\cdots+n_{r-1}+1\neq 1+\cdots+1$ in non-increasing order. Our goal this section is to reduce the calculation involving Eisenstein series associated to the parabolic subgroup $P_{n_1,\ldots,n_{r-1},1}$ to the calculation involving Eisenstein series associated to the parabolic subgroup $P_{n_1,\ldots,n_{r-1}}$. This argument can be repeated so that we are left with partitions of the form $n=n_1+\cdots+n_r$ with $n_r\ge2$, which will be tackled in the next section.

First we have the following rearrangement of Eisenstein series
\begin{lem}
\begin{eqnarray}
&&E_{(n_1,\ldots,n_{r-1},1)}\lp z,s,\phi_1,\ldots,\phi_{r-1},1\rp\\
\nonumber&=&E_{(n-1,1)}\lp z,s'', E_{(n_1,\ldots,n_{r-1})}\lp \frak{m}_{n-1}(z),s',\phi_1,\ldots,\phi_{r-1}\rp ,1\rp
\end{eqnarray}
where $s' = (s'_1,\ldots,s'_{r-1})\in\bbC^{r-1}$ and $s''\in\bbC$ are given by
\beq
s'_i = s_i-\frac{n_1s_1+\cdots+n_{r-1}s_{r-1}}{n-1},&& s'' = \frac{n_1s_1+\cdots+n_{r-1}s_{r-1}}{n-1}.
\eeq
\end{lem}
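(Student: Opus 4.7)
The plan is to unfold the left-hand side along the chain $P_{n_1,\ldots,n_{r-1},1}\subset P_{n-1,1}\subset SL_n$ and reassemble the result as an Eisenstein series on $X_n$ induced from an Eisenstein series on $X_{n-1}$. First I would break the coset sum in two stages: any $\gamma\in P_{n_1,\ldots,n_{r-1},1}(\bbZ)\backslash SL_n(\bbZ)$ is written uniquely as $\gamma=\tau\delta$ with $\delta$ ranging over $P_{n-1,1}(\bbZ)\backslash SL_n(\bbZ)$ and $\tau$ over $P_{n_1,\ldots,n_{r-1},1}(\bbZ)\backslash P_{n-1,1}(\bbZ)$. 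The latter quotient is identified, via the Levi projection $\frak{m}_{n-1}:P_{n-1,1}(\bbR)\to GL_{n-1}(\bbR)$, with $P_{n_1,\ldots,n_{r-1}}(\bbZ)\backslash SL_{n-1}(\bbZ)$; the sign ambiguity $\det\frak{m}_{n-1}(\tau)=\pm 1$ is harmless since only absolute values of Levi determinants enter the $I$-function.

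Next I would verify the factorization
\[
I_s(z,P_{n_1,\ldots,n_{r-1},1}) \;=\; I_{s''}(z,P_{n-1,1})\cdot I_{s'}(\frak{m}_{n-1}(z),P_{n_1,\ldots,n_{r-1}})
\]
by direct bookkeeping in Iwasawa coordinates. Writing $Y_1=1,Y_2,\ldots,Y_n$ for the diagonal of $z$, passing to $X_{n-1}$ on the right rescales each $Y_j$ to $Y_j/Y_2$, and the resulting factor $Y_2^{-\sum n_is'_i}$ cancels precisely because the constraint $\sum_{i=1}^{r-1}n_is'_i=0$ holds for the stated choice of $s'$. Matching exponents of the remaining $Y_j$ then forces $s'_i+s''=s_i$, which is exactly the definition $s'_i=s_i-s''$. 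In parallel, for $\tau\in P_{n-1,1}(\bbZ)$ one has $\frak{m}_{n_i}(\tau\delta z)=\frak{m}_{n_i}\bigl(\frak{m}_{n-1}(\tau)\cdot\frak{m}_{n-1}(\delta z)\bigr)$ for $i\le r-1$, and $I_{s''}(\tau\delta z,P_{n-1,1})=I_{s''}(\delta z,P_{n-1,1})$ because the Levi determinants of $\tau$ are $\pm 1$.

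Assembling these ingredients, the inner $\tau$-sum, after the identification with $P_{n_1,\ldots,n_{r-1}}(\bbZ)\backslash SL_{n-1}(\bbZ)$, is precisely $E_{(n_1,\ldots,n_{r-1})}\bigl(\frak{m}_{n-1}(\delta z),s',\phi_1,\ldots,\phi_{r-1}\bigr)$. Multiplying by the now-extracted $I_{s''}(\delta z,P_{n-1,1})$ and summing over $\delta$ gives, by the definition of the Eisenstein series $E_{(n-1,1)}$ induced from an automorphic form on $GL(n-1)$, exactly the right-hand side of the lemma. All manipulations are performed in a region of absolute convergence in $s$ (where the interchange of the two sums is justified), and the identity extends by analytic continuation.

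The main technical obstacle is the $I$-function bookkeeping in the second step: one must carefully reconcile the $(n-1)$-dimensional normalization of $\frak{m}_{n-1}(z)$ into $X_{n-1}$ with the paper's indexing convention $I_s=\prod_{j_1}Y_{j_1}^{s_1}\cdots\prod_{j_r}Y_{j_r}^{s_r}$, and correctly interpret the (redundantly written) parameter $s'\in\bbC^{r-1}$ subject to the constraint $\sum n_is'_i=0$. Once this is sorted out the rest is routine coset manipulation.
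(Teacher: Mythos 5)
Your proposal is correct and follows essentially the same route as the paper: the two-stage coset decomposition through $P_{n-1,1}(\bbZ)\backslash SL_n(\bbZ)$, the factorization $I_s(z,P_{n_1,\ldots,n_{r-1},1}) = I_{s'}(\frak{m}_{n-1}(z),P_{n_1,\ldots,n_{r-1}})\lp y_1^{n-1}\cdots y_{n-1}\rp^{s''}$ with the verification that $\sum n_i s_i'=0$, the invariance of the determinant factor under the Levi part (the paper's Lemma \ref{action}), and the reassembly of the inner sum as the $GL(n-1)$ Eisenstein series. No substantive differences.
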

\begin{proof}
By definition, 
\beq
&&E_{(n_1,\ldots,n_{r-1},1)}\lp z,s,\phi_1,\ldots,\phi_{r-1},1\rp\\
&=& \sum_{\gamma\in P_{n_1,\ldots,n_{r-1},1}(\bbZ)\backslash SL_n(\bbZ)} \prod_{i=1}^{r-1}\phi_i(\frak{m}_i(\gamma z))I_s(\gamma z,P_{n_1,\ldots,n_{r-1},1})\\
&=&\sum_{\beta \in P_{n_1,\ldots,n_{r-1},1}(\bbZ)\backslash P_{n-1,1}(\bbZ)}\   \sum_{\alpha\in P_{n-1,1}(\bbZ)\backslash SL_n(\bbZ)} \prod_{i=1}^{r-1}\phi_i(\frak{m}_i(\beta\alpha z))I_s(\beta\alpha z,P_{n_1,\ldots,n_{r-1},1})\\
\eeq
Here we can write
\beq
I_s\lp z,P_{n_1,\ldots,n_{r-1},1}\rp = I_{s'}\lp \frak{m}_{n-1}(z),P_{n_1,\ldots,n_{r-1}}\rp \ \lp y_1^{n-1} y_2^{n-2}\cdots y_{n-1}\rp ^{s''}.
\eeq
where $s' = (s'_1,\ldots,s'_{r-1})\in\bbC^{r-1}$ and $s''\in\bbC$ are given by
\beq
s'_i = s_i-\frac{n_1s_1+\cdots+n_{r-1}s_{r-1}}{n-1},&& s'' = \frac{n_1s_1+\cdots+n_{r-1}s_{r-1}}{n-1}.
\eeq
One easily can check that $n_1s'_1+\cdots+n_{r-1}s'_{r-1}=0$ so that $I_{s'}\lp \frak{m}_{n-1}(z),P_{n_1,\ldots,n_{r-1}}\rp$ is a well defined $I$-function with spectral parameter $s'$. Thus we have
\beq
&&E_{(n_1,\ldots,n_{r-1},1)}\lp z,s,\phi_1,\ldots,\phi_{r-1},1\rp\\
&=&\sum_{\beta \in P_{n_1,\ldots,n_{r-1},1}(\bbZ)\backslash P_{n-1,1}(\bbZ)}\   \sum_{\alpha\in P_{n-1,1}(\bbZ)\backslash SL_n(\bbZ)}\left.\left. \prod_{i=1}^{r-1}\phi_i(\frak{m}_i(  z))I_{s'}\lp \frak{m}_{n-1}(z),P_{n_1,\ldots,n_{r-1}}\rp   \lp y_1^{n-1} y_2^{n-2}\cdots y_{n-1}\rp ^{s''}    \rbar_\beta\rbar_\alpha\\
&=& \sum_{\alpha\in P_{n-1,1}(\bbZ)\backslash SL_n(\bbZ)} \left. \lp \sum_{\beta \in P_{n_1,\ldots,n_{r-1},1}(\bbZ)\backslash P_{n-1,1}(\bbZ)}  \prod_{i=1}^{r-1}\phi_i(\frak{m}_i(  \beta z))I_{s'}\lp \frak{m}_{n-1}(\beta z),P_{n_1,\ldots,n_{r-1}}\rp  \rp  \lp y_1^{n-1} y_2^{n-2}\cdots y_{n-1}\rp ^{s''}\rbar_\alpha\\
&=& \sum_{\alpha\in P_{n-1,1}(\bbZ)\backslash SL_n(\bbZ)} \left. \lp \sum_{\beta \in P_{n_1,\ldots,n_{r-1}}(\bbZ)\backslash SL_{n-1}(\bbZ)}  \prod_{i=1}^{r-1}\phi_i(\frak{m}_i(  \beta z))I_{s'}\lp \frak{m}_{n-1}(\beta z),P_{n_1,\ldots,n_{r-1}}\rp  \rp  \lp y_1^{n-1} y_2^{n-2}\cdots y_{n-1}\rp ^{s''}\rbar_\alpha\\
&=&\left. \sum_{\alpha\in P_{n-1,1}(\bbZ)\backslash SL_n(\bbZ)} E_{(n_1,\ldots,n_{r-1})}\lp \frak{m}_{n-1}(z),s',\phi_1,\ldots,\phi_{r-1}\rp  \lp y_1^{n-1} y_2^{n-2}\cdots y_{n-1}\rp ^{s''}\rbar_\alpha\\
&=&E_{(n-1,1)}\lp z,s'', E_{(n_1,\ldots,n_{r-1})}\lp \frak{m}_{n-1}(z),s',\phi_1,\ldots,\phi_{r-1}\rp ,1\rp
\eeq
as $\lp y_1^{n-1} y_2^{n-2}\cdots y_{n-1}\rp$ is invariant under the action of $\beta$ by Lemma \ref{action}.
\end{proof}
\begin{lem}\label{changeof}
We have the decomposition of measurable space
\begin{eqnarray}
P_{n-1,1}(\bbZ)\backslash X_n = SL_{n-1}(\bbZ)\backslash X_{n-1}\times (\bbR/\bbZ)^{n-1}\times\bbR^+.
\end{eqnarray}
with
\beq
d^*z = -\frac{1}{\xi(n)}\ d^*z'  \  \prod_{h=1}^{n-1} dx_{h,n} \   \ell^n\ \frac{d \ell}{\ell}.
\eeq
where
\beq
\ell  = \lp\prod_{i=1}^{n-1}y_i^{n-i}\rp^{-\frac{1}{n}}&& z' = \frak{m}_{n-1}(z).
\eeq
\end{lem}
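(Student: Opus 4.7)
The plan is to work in Iwasawa coordinates and convert the $P_{n-1,1}(\bbZ)$-quotient of $X_n$ into three independent factors. First, I would decompose $P_{n-1,1}(\bbZ) = N_{n-1,1}(\bbZ)\,M_{n-1,1}(\bbZ)$ via Langlands. The unipotent piece $N_{n-1,1}(\bbZ)\cong \bbZ^{n-1}$ shifts the last-column coordinates $x_{h,n}$ by integers, so dividing produces $(\bbR/\bbZ)^{n-1}$. The Levi acts as $z\mapsto \mathrm{diag}(A,1)\cdot z$ with $A\in GL_{n-1}(\bbZ)$ (the sign $\det A=\pm1$ is absorbed by the central $\bbR^*$-quotient in $X_n$), and Lemma \ref{action} shows that this leaves $y_1$ and every $x_{h,n}$ fixed while acting on $\frak{m}_{n-1}(z)$ as the usual arithmetic action on $X_{n-1}$, giving the factor $SL_{n-1}(\bbZ)\backslash X_{n-1}$. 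The remaining variable $\ell = \bigl(\prod_i y_i^{n-i}\bigr)^{-1/n}$ is a function only of $y$ and is invariant under both pieces, so it descends to a free $\bbR^+$-parameter.

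Second, I would carry out the coordinate change $(y_1,\ldots,y_{n-1})\mapsto (\ell, y'_1,\ldots,y'_{n-2})$ where $y'_k := y_{k+1}$ are the Iwasawa coordinates of $\frak{m}_{n-1}(z)\in X_{n-1}$. The relation
\beq
\ell^{-n} \;=\; \prod_{i=1}^{n-1} y_i^{n-i} \;=\; y_1^{n-1}\det(\frak{m}_{n-1}(z))
\eeq
yields $d\log y_1 = -\tfrac{n}{n-1}\,d\log \ell$ at fixed $y_2,\ldots,y_{n-1}$, and hence
\beq
y_1^{-(n-1)}\,\frac{dy_1}{y_1} \;=\; -\frac{n}{n-1}\,\ell^n\,\det(\frak{m}_{n-1}(z))\,\frac{d\ell}{\ell}.
\eeq
The remaining factors $\prod_{k=2}^{n-1} y_k^{-k(n-k)}\,dy_k/y_k$ differ from the $X_{n-1}$-Haar measure $\prod_{k=1}^{n-2} {y'_k}^{-k(n-1-k)}\,dy'_k/y'_k$ by exactly $\det(\frak{m}_{n-1}(z))^{-1}$, as I would check term-by-term from the identity $k(n-k)-(k-1)(n-k) = n-k$. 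The two determinant factors then cancel, so the net $y$-contribution is $-\tfrac{n}{n-1}\,\ell^n\,d\ell/\ell$ times the $X_{n-1}$-Haar measure.

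Finally, I would assemble the full measure. The $x$-measure splits as $\prod_{i<j\le n} dx_{ij} = \prod_{h=1}^{n-1} dx_{h,n}\cdot \prod_{i<j\le n-1} dx'_{ij}$, and combining with the $y$-Jacobian produces the constant $\tfrac{n}{n-1}\cdot \tfrac{c_n}{c_{n-1}} = \xi(n)^{-1}$, which follows directly from $c_n = n^{-1}\prod_{\ell=2}^n \xi(\ell)^{-1}$. The only delicate point is orientation: because $\ell$ is a strictly decreasing function of $y_1$ at fixed $y'$, the oriented change of variables on $\bbR^+$ reverses orientation, which produces the explicit minus sign in the statement rather than being absorbed into an absolute value. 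I anticipate no real obstacle beyond careful indexing; this is essentially a Jacobian computation combined with the standard observation that $SL_{n-1}(\bbZ)\backslash X_{n-1} \cong GL_{n-1}(\bbZ)\backslash X_{n-1}$.
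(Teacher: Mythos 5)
Your proposal is correct and follows essentially the same route as the paper --- the Iwasawa-coordinate factorization of $z$ into $(\overline{x}_n, z', \ell)$ together with the equivariance of the $SL_{n-1}(\bbZ)$-action on that factorization --- and you additionally carry out the Jacobian computation that pins down the constant $-\frac{1}{\xi(n)}$, a step the paper's own proof leaves implicit (it only records the factor $-\frac{n}{n-1}$ when it later unfolds). The one point you and the paper both gloss over is that the Levi of $P_{n-1,1}(\bbZ)$ is really $GL_{n-1}(\bbZ)$ rather than $SL_{n-1}(\bbZ)$, and $GL_{n-1}(\bbZ)\backslash X_{n-1}$ is not literally equal to $SL_{n-1}(\bbZ)\backslash X_{n-1}$ (they differ by an index-two action), though this affects at most a universal factor of two and not the structure of the argument.
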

\begin{proof}
By matrix multiplication, we have
\beq
z=X\cdot Y&=&\bemfive 1 & x_{1,2}& x_{1,3}&\cdots& x_{1,n}\\  &1&x_{2,3}&\cdots & x_{2,n} \\ &&\ddots&&\vdots\\ &&&1& x_{n-1,n}\\&&&&1\enm   \bemfive   \ell\ y_1y_2\cdots y_{n-1} &&&&\\  & \ell \ y_1y_2\cdots y_{n-2}&&&\\ &&\ddots&&\\ &&&\ell \ y_1&\\&&&&\ell\enm\\
&=&\bemfive 1 & &  & & x_{1,n}\\  &1& &  & x_{2,n} \\ &&\ddots&&\vdots\\ &&&1& x_{n-1,n}\\&&&&1\enm  \cdot \bemtwo z'&\\ &1\enm\cdot \bemtwo \ell^{-\frac{1}{n-1}}\cdot I_{n-1}&\\ &\ell\enm\\
&=& \bemtwo I_{n-1}& \overline{x}_n\\ &1\enm\cdot \bemtwo z'&\\ &1\enm\cdot \bemtwo \ell^{-\frac{1}{n-1}}\cdot I_{n-1}&\\ &\ell\enm,
\eeq
where
\beq
z'= \bemfive 1 & x_{1,2}& x_{1,3}&\cdots& x_{1,n-1}\\  &1&x_{2,3}&\cdots & x_{2,n-1} \\ &&\ddots&&\vdots\\ &&&1& x_{n-2,n-1}\\&&&&1\enm   \bemfive   \ell^{\frac{n}{n-1}}\ y_1y_2\cdots y_{n-1} &&&&\\  & \ell^{\frac{n}{n-1}} \ y_1y_2\cdots y_{n-2}&&&\\ &&\ddots&&\\ &&&\ell^{\frac{n}{n-1}} \ y_1&\enm = X'\cdot Y'.
\eeq

By direct computation, we have for $\gamma\in SL_{n-1}(\bbZ)$
\begin{eqnarray*}
&&\bemtwo    \gamma&\\ &1\enm \cdot z\\
&=&\bemtwo    \gamma&\\ &1\enm  \cdot \bemtwo I_{n-1}& \overline{x}_n\\ &1\enm\cdot \bemtwo z'&\\ &1\enm\cdot \bemtwo \ell^{-\frac{1}{n-1}}\cdot I_{n-1}&\\ &\ell\enm\\
&=&\bemtwo \gamma&\gamma \overline{x}_n\\ &1\enm\cdot \bemtwo z'&\\ &1\enm\cdot \bemtwo \ell^{-\frac{1}{n-1}}\cdot I_{n-1}&\\ &\ell\enm\\
&=&\bemtwo \gamma&\gamma \overline{x}_n\\ &1\enm\cdot \bemtwo    \gamma^{-1}&\\ &1\enm  \cdot \bemtwo    \gamma&\\ &1\enm \cdot \bemtwo z'&\\ &1\enm\cdot \bemtwo \ell^{-\frac{1}{n-1}}\cdot I_{n-1}&\\ &\ell\enm\\
&=&\bemtwo I_{n-1}&\gamma \overline{x}_n\\ &1\enm\cdot \bemtwo \gamma z'&\\ &1\enm\cdot \bemtwo \ell^{-\frac{1}{n-1}}\cdot I_{n-1}&\\ &\ell\enm.
\end{eqnarray*}
This gives the decomposition 
\beq
P_{n-1,1}(\bbZ)\backslash X_n = SL_{n-1}(\bbZ)\backslash X_{n-1}\times (\bbR/\bbZ)^{n-1}\times\bbR^+.
\eeq

Let $F(z)$ be an automorphic function. We can unfold the following product:

\beq
&& \int_{SL_{n}\backslash X_n}  F(z) \ E_{n-1,1}\lp z,1,s\rp\  d^*z\\
&=&\int_{P_{n-1,1}\backslash X_n}  F(z) \det(Y)^s d^*z\\
&=&\int_{SL_{n-1}(\bbZ)\backslash X_{n-1}}\int_0^\infty \int_{(\bbZ/\bbR)^{n-1}} F(\overline{x}_n, z',\ell)  \ell^{-ns} \lp-\frac{n}{n-1} \rp \ d\overline{x}_n \   \ell^n\ \frac{d \ell}{\ell} \ d^*z' 
\eeq
Now define
\begin{eqnarray*}
G(z'):=\int_0^\infty \int_{(\bbZ/\bbR)^{n-1}} F(\overline{x}_n, z',\ell)  \ell^{-ns} \lp-\frac{n}{n-1} \rp \ d\overline{x}_n \   \ell^n\ \frac{d \ell}{\ell} 
\end{eqnarray*}

We claim that $G(z')$ is automorphic.

\begin{eqnarray*}
G(\gamma z') &=& \int_0^\infty \int_{(\bbZ/\bbR)^{n-1}} F(\overline{x}_n, \gamma z',\ell)  \ell^{-ns} \lp-\frac{n}{n-1} \rp \ d\overline{x}_n \   \ell^n\ \frac{d \ell}{\ell} 
\end{eqnarray*}
With the fact that $F$ is automorphic we have
\begin{eqnarray*}
&&F(\overline{x}_n, \gamma z',\ell) \\
&=&F\lp\bemtwo I_{n-1}& \overline{x}_n\\ &1\enm\cdot \bemtwo \gamma z'&\\ &1\enm\cdot \bemtwo \ell^{-\frac{1}{n-1}}\cdot I_{n-1}&\\ &\ell\enm\rp\\
&=&F\lp \bemtwo    \gamma&\\ &1\enm \bemtwo I_{n-1}& \gamma^{-1}\overline{x}_n\\ &1\enm\cdot \bemtwo  z'&\\ &1\enm\cdot \bemtwo \ell^{-\frac{1}{n-1}}\cdot I_{n-1}&\\ &\ell\enm\rp\\
&=&F\lp   \bemtwo I_{n-1}& \gamma^{-1}\overline{x}_n\\ &1\enm\cdot \bemtwo  z'&\\ &1\enm\cdot \bemtwo \ell^{-\frac{1}{n-1}}\cdot I_{n-1}&\\ &\ell\enm\rp.
\end{eqnarray*}
Now since $\gamma\in SL_{n-1}(\bbZ)$ and $F(z)$ is automorphic, the integral over $(\bbZ/\bbR)^{n-1}$ is invariant under the action of $\gamma$ on $\overline{x}_{n}$. So
\begin{eqnarray*}
G(\gamma z') &=& \int_0^\infty \int_{(\bbZ/\bbR)^{n-1}} F(\overline{x}_n, \gamma z',\ell)  \ell^{-ns} \lp-\frac{n}{n-1} \rp \ d\overline{x}_n \   \ell^n\ \frac{d \ell}{\ell} \\
&=& \int_0^\infty \int_{(\bbZ/\bbR)^{n-1}} F(\gamma^{-1}\overline{x}_n,  z',\ell)  \ell^{-ns} \lp-\frac{n}{n-1} \rp \ d\overline{x}_n \   \ell^n\ \frac{d \ell}{\ell} \\
&=&\int_0^\infty \int_{(\bbZ/\bbR)^{n-1}} F(\overline{x}_n,  z',\ell)  \ell^{-ns} \lp-\frac{n}{n-1} \rp \ d\overline{x}_n \   \ell^n\ \frac{d \ell}{\ell} \\
&=& G(z').
\end{eqnarray*}

\end{proof}

Now let $\nu=a+bi$. By the definition of incomplete Eisenstein series and Lemma \ref{changeof} we have
\beq
&&\int_{\quo}    E_{(n_1,\ldots,n_{r-1},1)}\lp z,\eta,\phi_1,\ldots,\phi_{r-1},1\rp \lbar   E_{(n-1,1)}\lp z,\nu,1\rp     \rbar^2 d^*z\\
&=&
 \int_{\quo}  \frac{1}{(2\pi i)^{r-1}}\int_{(2)}\cdots\int_{(2)} \tilde{\eta}(s_1,\ldots,s_{r-1})     E_{(n_1,\ldots,n_{r-1},1)}\lp z,s,\phi_1,\ldots,\phi_{r-1},1\rp      ds_1\cdots d{s_{r-1}}\\
 &&\times  \lbar   E_{(n-1,1)}\lp z,\nu,1\rp     \rbar^2 d^*z\\
 &=&-\frac{1}{\xi(n)} \int_{SL_{n-1}(\bbZ)\backslash X_{n-1}}\int_0^\infty \int_{(\bbZ/\bbR)^{n-1}}  \frac{1}{(2\pi i)^{r-1}}\int_{(2)}\cdots\int_{(2)} \tilde{\eta}(s_1,\ldots,s_{r-1})     \\
 && E_{(n_1,\ldots,n_{r-1})}\lp \frak{m}_{n-1}(z),s',\phi_1,\ldots,\phi_{r-1}\rp \ell ^{-n s''} \lbar   E_{(n-1,1)}\lp z,\nu,1\rp     \rbar^2 \ ds_1\cdots d{s_{r-1}} \ d^*z'  \  \prod_{h=1}^{n-1} dx_{h,n} \   \ell^n\ \frac{d \ell}{\ell}
\eeq
where $s' = (s'_1,\ldots,s'_{r-1})\in\bbC^{r-1}$ and $s''\in\bbC$ are given by
\beq
s'_i = s_i-\frac{n_1s_1+\cdots+n_{r-1}s_{r-1}}{n-1},&& s'' = \frac{n_1s_1+\cdots+n_{r-1}s_{r-1}}{n-1}.
\eeq
as before.

Now by Proposition \ref{constantsquare} we need to understand the following five integrals:

\begin{eqnarray*}
\Xi_1&=&\frac{-1}{\xi(n)} \mathop{\int}_{SL_{n-1}(\bbZ)\backslash X_{n-1}}\int_0^\infty    \frac{1}{(2\pi i)^{r-1}}\int_{(2)}\cdots\int_{(2)} \tilde{\eta}(s) E_{(n_1,\ldots,n_{r-1})}\lp z',s',\phi_1,\ldots,\phi_{r-1}\rp \ell ^{-n s''} \\
\nonumber&&\times  4 \ell^{-2na}  \ ds_1\cdots d{s_{r-1}} \ d^*z'  \     \ell^n\ \frac{d \ell}{\ell},
\end{eqnarray*}
\begin{eqnarray*}
\Xi_2&=&\frac{-1}{\xi(n)} \mathop{\int}_{SL_{n-1}(\bbZ)\backslash X_{n-1}}\int_0^\infty   \frac{1}{(2\pi i)^{r-1}}\int_{(2)}\cdots\int_{(2)} \tilde{\eta}(s) E_{(n_1,\ldots,n_{r-1})}\lp z',s',\phi_1,\ldots,\phi_{r-1}\rp  \ell ^{-n s''}\\
\nonumber&&\times \frac{\lbar \xi(n\nu-1) \rbar^2}{\lbar \xi(n\nu) \rbar^2}\ell^{-n\frac{2-a}{n-1}}  \lbar E_{(n-2,1)}\lp z',\frac{n\nu-1}{n-1} ,1   \rp\rbar^2  \ ds_1\cdots d{s_{r-1}} \ d^*z'  \     \ell^n\ \frac{d \ell}{\ell} ,
\end{eqnarray*}
\begin{eqnarray*}
\Xi_3&=&\frac{-1}{\xi(n)}\mathop{\int}_{SL_{n-1}(\bbZ)\backslash X_{n-1}}\int_0^\infty    \frac{1}{(2\pi i)^{r-1}}\int_{(2)}\cdots\int_{(2)} \tilde{\eta}(s) E_{(n_1,\ldots,n_{r-1})}\lp z',s',\phi_1,\ldots,\phi_{r-1}\rp  \ell ^{-n s''}\\
\nonumber&&\times \ell^{-\frac{n^2a-2na+n+in^2b}{n-1}}\frac{\xi(n\bar{\nu}-1)}{\xi(n\bar{\nu})} E_{(n-2,1)}\lp z',\frac{n\bar{\nu}-1}{n-1},1   \rp  \ ds_1\cdots d{s_{r-1}} \ d^*z'  \     \ell^n\ \frac{d \ell}{\ell},
\end{eqnarray*}
\begin{eqnarray*}
\Xi'_3&=&\frac{-1}{\xi(n)}\mathop{\int}_{SL_{n-1}(\bbZ)\backslash X_{n-1}}\int_0^\infty    \frac{1}{(2\pi i)^{r-1}}\int_{(2)}\cdots\int_{(2)} \tilde{\eta}(s) E_{(n_1,\ldots,n_{r-1})}\lp z',s',\phi_1,\ldots,\phi_{r-1}\rp  \ell ^{-n s''}\\
\nonumber&&\times \ell^{-\frac{n^2a-2na+n-in^2b}{n-1}}\frac{\xi(n{\nu}-1)}{\xi(n{\nu})} E_{(n-2,1)}\lp z',\frac{n{\nu}-1}{n-1},1   \rp  \ ds_1\cdots d{s_{r-1}} \ d^*z'  \     \ell^n\ \frac{d \ell}{\ell},
\end{eqnarray*}
\begin{eqnarray*}
\Xi_4&=&\frac{-1}{\xi(n)}\mathop{\int}_{SL_{n-1}(\bbZ)\backslash X_{n-1}}\int_0^\infty  \frac{1}{(2\pi i)^{r-1}}\int_{(2)}\cdots\int_{(2)} \tilde{\eta}(s) E_{(n_1,\ldots,n_{r-1})}\lp z',s',\phi_1,\ldots,\phi_{r-1}\rp\ell ^{-n s''}\\
\nonumber&&\times \frac{8\ell^{-2na}}{\lbar\xi \lp \nu\rp\rbar^2}\sum_{m_{n-1}=1}^\infty\sum_{\gamma\in P_{(n-2,1)}(\bbZ)\backslash SL_{n-1}(\bbZ)} m_{n-1}^{na-1}\sigma_{-na+1-inb}(m_{n-1})\sigma_{-na+1+inb}(m_{n-1}) \lp\frac{\ell^{-\frac{n}{n-1}}}{\det(z')^{\frac{1}{n-1}}}\rp^{-na+1}\\
\nonumber&&\times \ \left.K_{\frac{na-1}{2}+\frac{inb}{2}}\lp2\pi m_{n-1}\frac{\ell^{-\frac{n}{n-1}}}{\det(z')^{\frac{1}{n-1}}}\rp K_{\frac{na-1}{2} -\frac{inb}{2}}\lp2\pi m_{n-1}\frac{\ell^{-\frac{n}{n-1}}}{\det(z')^{\frac{1}{n-1}}}\rp \right|_\gamma   \ ds_1\cdots d{s_{r-1}} \ d^*z'  \     \ell^n\ \frac{d \ell}{\ell}.
\end{eqnarray*}

\begin{lem}
For $(n_1,\ldots,n_{r-1},1)\neq(1,\ldots,1)$, we have that $\Xi_1=0$.
\end{lem}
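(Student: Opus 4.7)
The plan is to factor out the $z'$-integral, show it vanishes via a Rankin-Selberg style unfolding combined with the cuspidality of at least one $\phi_{i_0}$, and conclude that $\Xi_1 = 0$.

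First I would interchange the order of integration to isolate the $z'$-piece from the $\ell$- and contour integrals. Working in the region of absolute convergence of $E_{(n_1,\ldots,n_{r-1})}(z',s',\phi_1,\ldots,\phi_{r-1})$ (reached by pushing the $\tilde{\eta}$-contours sufficiently far to the right, which is legal given the rapid decay of $\tilde{\eta}$ in imaginary parts and a residue-free shift), Fubini yields the form
\begin{eqnarray*}
\Xi_1 &=& \frac{-4}{\xi(n)} \cdot \frac{1}{(2\pi i)^{r-1}}\int_{(R)}\cdots\int_{(R)} \tilde{\eta}(s)\,\lp\int_0^\infty \ell^{n-ns''-2na}\,\frac{d\ell}{\ell}\rp \\
 && \times \lp\int_{SL_{n-1}(\bbZ)\backslash X_{n-1}} E_{(n_1,\ldots,n_{r-1})}(z',s',\phi_1,\ldots,\phi_{r-1})\, d^*z'\rp ds_1\cdots ds_{r-1}.
\end{eqnarray*}

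Next I would unfold the inner $z'$-integral against the series definition of $E_{(n_1,\ldots,n_{r-1})}$, so that the domain becomes $P_{n_1,\ldots,n_{r-1}}(\bbZ)\backslash X_{n-1}$ with integrand $\prod_i \phi_i(\frak{m}_i(z'))\,I_{s'}(z',P)$. Using the Langlands decomposition $P = N_P M_P$, the integrand depends only on the Levi part, so the unipotent integral contributes volume $1$, and the remaining integral factors through the Levi into a product of integrals $\int_{SL_{n_i}(\bbZ)\backslash X_{n_i}} \phi_i\, d^*z_i$ weighted by scalar linking factors coming from $I_{s'}$. Because $(n_1,\ldots,n_{r-1},1)\neq(1,\ldots,1)$, some $n_{i_0}\ge 2$; for that index $\phi_{i_0}$ is a genuine Maass cusp form, and its integral over $SL_{n_{i_0}}(\bbZ)\backslash X_{n_{i_0}}$ is zero since it equals the pairing with the constant function, which vanishes by cuspidality (the constant term along the minimal parabolic kills the $0$th Fourier coefficient). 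Hence the inner $z'$-integral vanishes identically in $s'$, forcing $\Xi_1=0$.

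The main obstacle to address is the divergence of the bare $\ell$-integral $\int_0^\infty \ell^{n-ns''-2na}\,d\ell/\ell$ at any fixed $s''$, which is an obstruction to using Fubini naively on the original expression. This is only an apparent difficulty in our setting: since the $z'$-integral vanishes identically as a function of $s'$, we may stop the computation immediately after the unfolding step, and the divergent $\ell$-integral is never actually encountered. A secondary technical point is ensuring that the contour shift to the region of absolute convergence does not cross any poles of $\tilde{\eta}(s)\, E_{(n_1,\ldots,n_{r-1})}(z',s',\phi)$ contributing residues, but this follows from the holomorphy of $\tilde{\eta}$ together with the well-known pole structure of the Eisenstein series.
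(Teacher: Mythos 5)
Your underlying mechanism --- the $z'$-integration kills everything because $(n_1,\ldots,n_{r-1},1)\neq(1,\ldots,1)$ forces some $n_{i_0}\ge 2$, whose block carries a genuine Maass cusp form, so the inducing data is orthogonal to the constant function --- is exactly the reason the paper invokes (stated there as the vanishing of the constant term of $E_{(n_1,\ldots,n_{r-1})}\lp z',s',\phi_1,\ldots,\phi_{r-1}\rp$ along the minimal parabolic). The gap is in how you reach it. After your Fubini step, the integrand of the $s$-integral is a product of two integrals that are \emph{both} divergent: $\int_0^\infty \ell^{\,n-ns''-2na}\,d\ell/\ell$ diverges for every fixed $s$ (you acknowledge this), but so does $\int_{SL_{n-1}(\bbZ)\backslash X_{n-1}}E_{(n_1,\ldots,n_{r-1})}(z',s',\phi_1,\ldots,\phi_{r-1})\,d^*z'$, which you treat as a convergent integral equal to zero. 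In the region of absolute convergence the Eisenstein series has nonvanishing constant terms along the parabolics associate to $P_{(n_1,\ldots,n_{r-1})}$ and therefore grows like $I_{s'}$ in the corresponding partial cusps, so it is not integrable over the quotient; your own unfolding exhibits the divergence, since the ``scalar linking factors coming from $I_{s'}$'' are integrals of characters over the central torus of the Levi, of the shape $\int_0^\infty Y^{\sigma}\,dY/Y$, which do not converge. So the factored expression is a product of two meaningless symbols, and ``stopping immediately after the unfolding step'' does not assign it the value $0$; nothing in the proposal justifies passing from the convergent iterated integral defining $\Xi_1$ to this ill-defined product.

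The repair is to not separate the variables. Keep the $\tilde{\eta}$-contour integrals (and the $\ell$-integral) wrapped around the Eisenstein series: the resulting function of $z'$ is an incomplete (pseudo-) Eisenstein series attached to $P_{(n_1,\ldots,n_{r-1})}$ with cuspidal data on the blocks of size $\ge 2$, hence rapidly decreasing on Siegel sets and honestly integrable over $SL_{n-1}(\bbZ)\backslash X_{n-1}$. Its integral against $1$ vanishes because its constant term along the minimal parabolic is zero --- equivalently, because incomplete Eisenstein series attached to non-associate parabolics are orthogonal and the constant function lies in the closure of $\mathcal{H}_{(1,\ldots,1)}$. This is the paper's one-line argument. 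If you prefer your Rankin--Selberg picture, unfold this \emph{incomplete} Eisenstein series rather than the bare one: Mellin inversion of $\tilde{\eta}$ turns the torus integrals into convergent ones, and the factor $\int_{SL_{n_{i_0}}(\bbZ)\backslash X_{n_{i_0}}}\phi_{i_0}\,d^*z=0$ then legitimately annihilates the whole expression. Either way, the cuspidality input you identified is the right one; the variable separation is what has to go.
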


\begin{proof}
Since $(n_1,\ldots,n_{r-1},1)\neq(1,\ldots,1)$, the constant term of $E_{(n_1,\ldots,n_{r-1})}\lp z',s',\phi_1,\ldots,\phi_{r-1}\rp$ along the minimal parabolic is $0$. So we are integrating an automorphic function without constant term over the entire quotient $SL_{n-1}(\bbZ)\backslash X_{n-1}$. We must have $\Xi_1=0$.
\end{proof}

\begin{lem}
For $(n_1,\ldots,n_{r-1},1)\neq(1,\ldots,1)$, we have that $\Xi_3=\Xi'_3=0$.
\end{lem}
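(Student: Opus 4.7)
The structure of the proof mirrors that of the preceding lemma showing $\Xi_1=0$. First, observe that all $z'$-independent quantities in $\Xi_3$, namely the power $\ell^{-\frac{n^2a-2na+n+in^2b}{n-1}}$, the ratio $\xi(n\bar\nu-1)/\xi(n\bar\nu)$, and the measure factor $\ell^n\,d\ell/\ell$, depend only on $\ell$ and on the fixed parameter $\nu$. After shifting the Mellin contours in the $s_i$-variables into a region of absolute convergence and interchanging the order of integration by Fubini, the vanishing of $\Xi_3$ reduces to the identity
\beq
\mathcal{I}(s'):=\int_{SL_{n-1}(\bbZ)\backslash X_{n-1}} E_{(n_1,\ldots,n_{r-1})}\lp z',s',\phi_1,\ldots,\phi_{r-1}\rp\,E_{(n-2,1)}\lp z',\frac{n\bar\nu-1}{n-1},1\rp d^*z'=0
\eeq
holding as an identity in $s'$.

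To prove $\mathcal{I}(s')\equiv 0$, the plan is to unfold the degenerate Eisenstein series $E_{(n-2,1)}(z',\sigma,1)=\sum_{\gamma\in P_{n-2,1}(\bbZ)\backslash SL_{n-1}(\bbZ)} I_\sigma(\gamma z',P_{n-2,1})$ against the fundamental domain, obtaining an integral over $P_{n-2,1}(\bbZ)\backslash X_{n-1}$. Applying the direct analog of Lemma \ref{changeof} to $P_{n-2,1}\subset GL(n-1)$, this quotient factors as $SL_{n-2}(\bbZ)\backslash X_{n-2}\times(\bbR/\bbZ)^{n-2}\times\bbR^+$. Integration over the unipotent cube $(\bbR/\bbZ)^{n-2}$ then replaces $E_{(n_1,\ldots,n_{r-1})}$ with its constant term along $P_{n-2,1}$ on $GL(n-1)$; by Langlands' theory (Proposition II.1.7 of \cite{MW}), this constant term decomposes, indexed by Weyl elements conjugating $M_{n_1,\ldots,n_{r-1}}$ into $M_{n-2,1}$, into a sum of intertwined Eisenstein series on the Levi $GL(n-2)\times GL(1)$ built from the inducing data $\phi_1,\ldots,\phi_{r-1}$.

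Since the partition $(n_1,\ldots,n_{r-1},1)\neq(1,\ldots,1)$, at least one $\phi_j$ is a genuine Maass cusp form on $GL(n_j)$ with $n_j\ge 2$. In every surviving Weyl summand, the remaining $z''$-integration over $SL_{n-2}(\bbZ)\backslash X_{n-2}$, together with the $\bbR^+$ integration against the surviving power of $\ell$, factorizes along the Levi and ultimately pairs this cusp form against a power of the determinant (equivalently, against a character), which vanishes by cuspidality. The main technical obstacle I anticipate is the bookkeeping for those partitions $(n_1,\ldots,n_{r-1})$ which refine $(n-2,1)$ nontrivially, where several Weyl elements contribute simultaneously and each summand must be shown to vanish. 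Finally, the vanishing of $\Xi_3'$ is obtained by the identical argument upon replacing $\nu$ with $\bar\nu$ and $b$ with $-b$; the inner $z'$-integral is of the same shape and is disposed of by the same unfolding and cuspidality argument.
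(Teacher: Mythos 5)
Your proposal is correct in outline and, for the generic case, coincides with the paper's argument: both reduce $\Xi_3$ to the vanishing of the inner pairing $\int_{SL_{n-1}(\bbZ)\backslash X_{n-1}} E_{(n_1,\ldots,n_{r-1})}\lp z',s',\phi_1,\ldots,\phi_{r-1}\rp E_{(n-2,1)}\lp z',\cdot,1\rp d^*z'$, and when every part of $(n_1,\ldots,n_{r-1})$ is at least $2$ both conclude by unfolding the degenerate Eisenstein series and invoking Proposition \ref{constant} (the Weyl sum in Langlands' constant-term formula is empty, so the constant term along $P_{(n-2,1)}$ vanishes identically). Two points of divergence are worth noting. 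First, the paper performs the separation of the $\ell$-integral by Mellin inversion, collapsing one of the $s$-contour integrals; your appeal to contour shifts plus Fubini is not quite the right mechanism, since the bare $\ell$-integral of $\ell^{-ns''+c}\,d\ell/\ell$ over $(0,\infty)$ does not converge until it is recombined with the $\tilde\eta(s)$-integral -- this is cosmetic but should be phrased as Mellin inversion. Second, and more substantively, in the case where $(n_1,\ldots,n_{r-1})$ still has trailing $1$'s your route genuinely differs from the paper's: the paper unfolds with respect to the \emph{other} factor $E_{(n_1,\ldots,n_{r-1})}$ and iterates its reduction machinery (via Proposition \ref{constantterm}) until the residual partition has no part equal to $1$, at which point Proposition \ref{constant} applies; you instead always unfold $E_{(n-2,1)}$ and must then confront a genuinely nonzero constant term along $P_{(n-2,1)}$. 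Your mechanism for killing the surviving Weyl summands is sound but stated too loosely: those summands are Eisenstein series on the Levi $GL(n-2)$ induced from the cuspidal $\phi_j$ (not bare cusp forms), integrated against a power of the determinant over $SL_{n-2}(\bbZ)\backslash X_{n-2}$, so one further unfolding is needed to reduce to $\int_{SL_{n_j}(\bbZ)\backslash X_{n_j}}\phi_j = 0$. With that step supplied, your variant works and is arguably more direct, at the cost of the Weyl-element bookkeeping you yourself flag; the paper's iteration avoids that bookkeeping by reusing lemmas it has already established.
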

\begin{proof}
We can rearrange $\Xi_3$ and get
\beq
\Xi_3&=&\frac{-1}{\xi(n)} \frac{\xi(n\bar{\nu}-1)}{\xi(n\bar{\nu})}\mathop{\int}_{SL_{n-1}(\bbZ)\backslash X_{n-1}}\int_0^\infty    \frac{1}{(2\pi i)^{r-1}}\int_{(2)}\cdots\int_{(2)} \tilde{\eta}(s) E_{(n_1,\ldots,n_{r-1})}\lp z',s',\phi_1,\ldots,\phi_{r-1}\rp  \ell ^{-n s''}\\
\nonumber&&\times \ell^{-\frac{n^2a-2na+n+in^2b}{n-1}} E_{(n-2,1)}\lp z',\frac{n\bar{\nu}-1}{n-1},1   \rp  \ ds_1\cdots d{s_{r-1}} \ d^*z'  \     \ell^n\ \frac{d \ell}{\ell}
\eeq

We can now apply the Mellin inversion theorem to eliminate the integral over $\ell$. Thus $\Xi_3$ has the shape of
\beq
\Xi_3&=&c \mathop{\int}_{SL_{n-1}(\bbZ)\backslash X_{n-1}}\frac{1}{(2\pi i)^{r-2}}\int_{(2)}\cdots\int_{(2)}f(s') E_{(n_1,\ldots,n_{r-1})}\lp z',s',\phi_1,\ldots,\phi_{r-1}\rp \\
&&\times E_{(n-2,1)}\lp z',\nu',1   \rp  \ ds'_1\cdots d{s'_{r-2}} \ d^*z'
\eeq
for some $f$ with rapid decay in the imaginary parts of the arguments.
At this point, if $n_r\ge 2$, then unfolding with respect to $E_{(n-2,1)}\lp z',\nu',1   \rp$ and applying Proposition \ref{constant} reduce $\Xi_3$ to $0$. If $n_r=1$, we unfold with respect to $E_{(n_1,\ldots,n_{r-1})}\lp z',s',\phi_1,\ldots,\phi_{r-1}\rp$ and apply \ref{constantterm} repeatedly until the last partition number is no longer $1$ and apply Proposition \ref{constant}.
\end{proof}

\begin{lem}
For $(n_1,\ldots,n_{r-1},1)\neq(1,\ldots,1)$, we have that $\Xi_4=0$.
\end{lem}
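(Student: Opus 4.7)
The plan is to unfold the Poincar\'e-style sum
$\sum_{\gamma\in P_{(n-2,1)}(\bbZ)\backslash SL_{n-1}(\bbZ)}(\cdots)|_\gamma$
that appears inside $\Xi_4$ against the integral over $SL_{n-1}(\bbZ)\backslash X_{n-1}$. Since $E_{(n_1,\ldots,n_{r-1})}(z',s',\phi_1,\ldots,\phi_{r-1})$ is $SL_{n-1}(\bbZ)$-automorphic, the standard Rankin--Selberg unfolding trick collapses the $\gamma$-sum together with the fundamental-domain integral into a single integral over $P_{(n-2,1)}(\bbZ)\backslash X_{n-1}$.

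Next I would apply the analog of Lemma \ref{changeof} one dimension down, obtaining $P_{(n-2,1)}(\bbZ)\backslash X_{n-1}\cong SL_{n-2}(\bbZ)\backslash X_{n-2}\times(\bbR/\bbZ)^{n-2}\times\bbR^+$. A short computation (using the identity $\ell^{-\frac{n}{n-1}}/\det(\frak{m}_{n-1}(z))^{\frac{1}{n-1}}=y_1$, the smallest Iwasawa $y$-variable of $z'$) shows that the $K$-Bessel factor, the sum over $m_{n-1}$, and the attendant divisor sums depend only on this $y_1$, on $\ell$, and on the new determinantal parameter of $z'$; none of them sees the $(\bbR/\bbZ)^{n-2}$ coordinates. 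Consequently the $(\bbR/\bbZ)^{n-2}$ integration affects only the $E_{(n_1,\ldots,n_{r-1})}$ factor, and produces its constant term along the maximal parabolic $P_{(n-2,1)}\subset GL(n-1)$.

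From here the two subcases split exactly as they did for $\Xi_3$. If $n_{r-1}\ge 2$, Proposition \ref{constant}, applied to the partition $(n_1,\ldots,n_{r-1})$ of $n-1$ with the parabolic $P_{(n-2,1)}$, forces this constant term (and hence $\Xi_4$) to vanish. If $n_{r-1}=1$, use the factorization established by the first lemma of this subsection to rewrite $E_{(n_1,\ldots,n_{r-2},1)}$ as a degenerate $(n-2,1)$-type series induced from $E_{(n_1,\ldots,n_{r-2})}$, apply Proposition \ref{constantterm} (or rather its straightforward extension to non-trivial inducing data) to strip off the trailing $1$, and iterate. The hypothesis $(n_1,\ldots,n_{r-1},1)\neq(1,\ldots,1)$ guarantees that $(n_1,\ldots,n_{r-1})$ contains at least one entry $\ge 2$, so the iteration eventually reaches a partition whose last entry is $\ge 2$, at which point Proposition \ref{constant} closes the argument.

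The main technical point---the one step that requires more than formal bookkeeping---is verifying that after the change of variables the Bessel and divisor-sum piece really is independent of the $(\bbR/\bbZ)^{n-2}$ coordinates, so that the $(\bbR/\bbZ)^{n-2}$ integration cleanly extracts the constant term of the $GL(n-1)$-Eisenstein factor. Once that independence is confirmed, the remainder of the argument mirrors the treatment given for $\Xi_1$ and $\Xi_3$ almost verbatim.
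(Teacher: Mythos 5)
Your proposal is correct and follows essentially the same route as the paper: the paper packages the $\gamma$-sum (after a change of variable that turns the Bessel product into a Mellin integral) as a degenerate Eisenstein series $E_{(n-2,1)}(z',\cdot,1)$ and then invokes the $\Xi_3$ lemma, whose proof performs exactly the Rankin--Selberg unfolding over $P_{(n-2,1)}(\bbZ)\backslash X_{n-1}$ and constant-term vanishing that you carry out directly. Your observation that the Bessel and divisor-sum factors depend only on $\ell$ and $\ell'$ (equivalently on $y_1=\ell^{-\frac{n}{n-1}}\ell'$), and hence pass through the $(\bbR/\bbZ)^{n-2}$ integration, is the same fact the paper uses via Lemma 3.1(3), so the two arguments coincide in substance.
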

\begin{proof}
\beq
\Xi_4&=&\frac{-1}{\xi(n)}\mathop{\int}_{SL_{n-1}(\bbZ)\backslash X_{n-1}}\int_0^\infty  \frac{1}{(2\pi i)^{r-1}}\int_{(2)}\cdots\int_{(2)} \tilde{\eta}(s) E_{(n_1,\ldots,n_{r-1})}\lp z',s',\phi_1,\ldots,\phi_{r-1}\rp\ell ^{-n s''}\\
\nonumber&&\times \frac{8\ell^{-2na}}{\lbar\xi \lp \nu\rp\rbar^2}\sum_{m_{n-1}=1}^\infty\sum_{\gamma\in P_{(n-2,1)}(\bbZ)\backslash SL_{n-1}(\bbZ)} m_{n-1}^{na-1}\sigma_{-na+1-inb}(m_{n-1})\sigma_{-na+1+inb}(m_{n-1}) \lp\frac{\ell^{-\frac{n}{n-1}}}{\det(z')^{\frac{1}{n-1}}}\rp^{-na+1}\\
\nonumber&&\times \ \left.K_{\frac{na-1}{2}+\frac{inb}{2}}\lp2\pi m_{n-1}\frac{\ell^{-\frac{n}{n-1}}}{\det(z')^{\frac{1}{n-1}}}\rp K_{\frac{na-1}{2} -\frac{inb}{2}}\lp2\pi m_{n-1}\frac{\ell^{-\frac{n}{n-1}}}{\det(z')^{\frac{1}{n-1}}}\rp \right|_\gamma   \ ds_1\cdots d{s_{r-1}} \ d^*z'  \     \ell^n\ \frac{d \ell}{\ell}.
\eeq
Recall that $\frac{\ell^{-\frac{n}{n-1}}}{\det(z')^{\frac{1}{n-1}}} = y_1$ and the effect of $\gamma$ on $y_1$ is given by part (3) of Lemma \ref{action}. We have $y'_{1} = y_{1}\lp b_1^2+\cdots + b_{n-1}^2\rp^{\frac{1}{2}}$ where $b_i's$ are defined in Lemma \ref{action}. We can then make the change of variable $m_{n-1}\frac{\ell^{-\frac{n}{n-1}}}{\det(z')^{\frac{1}{n-1}}} \lp b_1^2+\cdots + b_{n-1}^2\rp^{\frac{1}{2}} \mapsto u$. The sum of $\lp b_1^2+\cdots + b_{n-1}^2\rp^{\frac{1}{2}} $ over $P_{(n-2,1)}(\bbZ)\backslash SL_{n-1}(\bbZ)$ is simply a degenerate Eisenstein series. The sum over $m_{n-1}$ of divisor functions is a ratio of Zeta functions by Lemma \ref{divisor}. This together with ratio of Gamma functions (formed after integrating over $u$ of the $K$-Bessel functions) will give completed Zeta functions. We can now apply the previous Lemma which shows that $\Xi_4=0$.
\end{proof}

\begin{prop}\label{reduction}
For $(n_1,\ldots,n_{r-1},1)\neq(1,\ldots,1)$, we have that
\beq
&&\int_{\quo}    E_{(n_1,\ldots,n_{r-1},1)}\lp z,\eta,\phi_1,\ldots,\phi_{r-1},1\rp \lbar   E_{(n-1,1)}\lp z,\nu,1\rp     \rbar^2 d^*z\\
&=&C \frac{\lbar \xi(n\nu-1) \rbar^2}{ \lbar \xi(n\nu) \rbar^2}  \mathop{\int}_{SL_{n-1}(\bbZ)\backslash X_{n-1}} \frac{1}{(2\pi i)^{r-2}}\int_{(2)}\cdots\int_{(2)} \psi(s')E_{(n_1,\ldots,n_{r-1})}\lp z',s',\phi_1,\ldots,\phi_{r-1}\rp \\
&&\times  \lbar E_{(n-2,1)}\lp z',\frac{n\nu-1}{n-1} ,1   \rp\rbar^2  \ ds'_1\cdots d{s'_{r-2}} \ d^*z'
\eeq
for some absolute constant $C$.
\end{prop}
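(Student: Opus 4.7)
The plan is to assemble the three vanishing lemmas just proved and reduce the integral to its sole surviving piece, then identify that piece with the claimed form via Mellin inversion in $\ell$. By the rearrangement lemma at the start of this subsection, Definition \ref{def4}, and the coordinate decomposition of Lemma \ref{changeof} applied in combination with the expansion of $\lbar E_{(n-1,1)}(z,\nu,1)\rbar^2$ supplied by Proposition \ref{constantsquare}, the left-hand side of the proposition equals the sum $\Xi_1+\Xi_2+\Xi_3+\Xi_3'+\Xi_4$ displayed above. Under the hypothesis $(n_1,\ldots,n_{r-1},1)\neq(1,\ldots,1)$, the three preceding lemmas yield $\Xi_1=\Xi_3=\Xi_3'=\Xi_4=0$, so the integral collapses to $\Xi_2$.

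The remaining task is to rewrite $\Xi_2$ in the stated form. The quantity $s''=(n_1s_1+\cdots+n_{r-1}s_{r-1})/(n-1)$ is a linear functional of $(s_1,\ldots,s_{r-1})$, while the shifted variables $s'_i=s_i-s''$ obey $\sum n_is'_i=0$, so $(s'_1,\ldots,s'_{r-2})$ parametrize an $(r-2)$-dimensional hyperplane and $s''$ parametrizes the transverse direction. A linear change of contour variables, with constant Jacobian, decomposes the original $(r-1)$-fold $s$-integral as the product of an $s''$-integral along a vertical line and an $(r-2)$-fold $s'$-integral. Since $E_{(n-2,1)}(z',(n\nu-1)/(n-1),1)$ and the ratio $\lbar\xi(n\nu-1)\rbar^2/\lbar\xi(n\nu)\rbar^2$ depend on neither $\ell$ nor $s$, and $E_{(n_1,\ldots,n_{r-1})}(z',s',\phi_1,\ldots,\phi_{r-1})$ depends only on $s'$, the $s''$-integral combines with the $\ell$-integral into
\[
\frac{1}{2\pi i}\int_{(\sigma)} \tilde\eta(s)\int_0^\infty \ell^{\,n-\frac{n(2-a)}{n-1}-ns''}\,\frac{d\ell}{\ell}\,ds''.
\]
Mellin inversion collapses this double integral to the evaluation of $\tilde\eta$ along the affine slice $s''=1-(2-a)/(n-1)$, producing a smooth function $\psi(s_1',\ldots,s_{r-2}')$ that inherits the rapid imaginary-direction decay of $\tilde\eta$. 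Pulling out $\lbar\xi(n\nu-1)\rbar^2/\lbar\xi(n\nu)\rbar^2$ and absorbing $-1/\xi(n)$ together with the Jacobian of the coordinate change into a single absolute constant $C$ yields the asserted identity.

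The main obstacle will be justifying the Mellin-inversion step cleanly: the original contour sits at $\re s_i=2$, and in particular $\re s''=2$, whereas the $\ell$-integral converges in a different vertical strip once $a=\re\nu$ is fixed. One must therefore shift the $s''$-contour to the strip where Fubini applies, verify that no residues are crossed in that shift (which uses holomorphy of $E_{(n_1,\ldots,n_{r-1})}(z',s',\phi_1,\ldots,\phi_{r-1})$ in the relevant region together with the fact that $\tilde\eta$ is entire), and invoke the Paley--Wiener rapid decay of $\tilde\eta$ in imaginary directions to secure absolute integrability throughout. Once this analytic bookkeeping is in place, the identity in the statement follows immediately upon matching the resulting integrand with the integrand in $\Xi_2$.
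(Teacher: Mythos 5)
Your proposal is correct and follows the paper's own argument essentially verbatim: the three preceding lemmas kill $\Xi_1,\Xi_3,\Xi_3',\Xi_4$, and the surviving term $\Xi_2$ is rewritten by a linear change of the $s$-variables plus Mellin inversion in $\ell$, which evaluates $\tilde\eta$ on the slice $s''=1-\frac{2-a}{n-1}$ to produce $\psi(s')$, with the Jacobian and $-1/\xi(n)$ absorbed into $C$. The extra care you flag about justifying the contour shift and Fubini is a reasonable addition but does not change the approach.
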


\begin{proof}From the previous three lemmas, we see that the only term left is $\Xi_2$. We have the following simplification for $\Xi_2$:
\beq
\Xi_2&=&\frac{-1}{\xi(n)} \mathop{\int}_{SL_{n-1}(\bbZ)\backslash X_{n-1}}\int_0^\infty   \frac{1}{(2\pi i)^{r-1}}\int_{(2)}\cdots\int_{(2)} \tilde{\eta}(s) E_{(n_1,\ldots,n_{r-1})}\lp z',s',\phi_1,\ldots,\phi_{r-1}\rp  \ell ^{-n s''}\\
\nonumber&&\times \frac{\lbar \xi(n\nu-1) \rbar^2}{\lbar \xi(n\nu) \rbar^2}\ell^{-n\frac{2-a}{n-1}}  \lbar E_{(n-2,1)}\lp z',\frac{n\nu-1}{n-1} ,1   \rp\rbar^2  \ ds_1\cdots d{s_{r-1}} \ d^*z'  \     \ell^n\ \frac{d \ell}{\ell}\\
&=&-\frac{n^2-n}{n_{r-1}}\frac{\lbar \xi(n\nu-1) \rbar^2}{\xi(n)\lbar \xi(n\nu) \rbar^2}  \mathop{\int}_{SL_{n-1}(\bbZ)\backslash X_{n-1}} \frac{1}{(2\pi i)^{r-2}}\int_{(2)}\cdots\int_{(2)} \psi(s')E_{(n_1,\ldots,n_{r-1})}\lp z',s',\phi_1,\ldots,\phi_{r-1}\rp \\
&&\times  \lbar E_{(n-2,1)}\lp z',\frac{n\nu-1}{n-1} ,1   \rp\rbar^2  \ ds'_1\cdots d{s'_{r-2}} \ d^*z',
\eeq
where the factor $\frac{n^2-n}{n_{r-1}}$ comes from the Jacobian of the change of variables and
\beq
\psi(s'):= \tilde{\eta}\lp s_1'+c,\ldots,s'_{r-2}+c,-(s'_1+\ldots+s'_{r-2}+c)\rp
\eeq
where $c=\frac{2-a}{n-1}-1$.
\end{proof}

This proposition can be used until we reach the following:
\begin{enumerate}
\item 
\beq
  \int_{\sltz\backslash\bbH}\phi(z)\lbar E\lp z,s\rp \rbar^2 \ d^*z
\eeq
where $\phi$ is a cusp form on $GL(2)$ and $E\lp z,s\rp$ is the Eisenstein series on $GL(2)$.
\item 
\beq
  \int_{\quo}\phi(z)\lbar E_{n-1,1}\lp z,s,1\rp \rbar^2 \ d^*z
\eeq
where $\phi$ is a cusp form on $GL(n)$ for $n\ge3$.
\item 
\beq
\int_{\quo}    E_{(n_1,\ldots,n_{r})}\lp z,\eta,\phi_1,\ldots,\phi_{r}\rp \lbar   E_{(n-1,1)}\lp z,\nu,1\rp     \rbar^2 d^*z
\eeq
for $n=n_1+\ldots+n_r$ in non-increasing order with $n_r\ge2$.

We will consider these three separately in the next three sections.
\end{enumerate}

 \vspace{20mm}
\subsection{A \texorpdfstring{$GL(2)$}{Lg} Calculation}
In this section, our goal is to show the following estimate.
\begin{thm}\label{prop6}
\beq
\int_{\quo}E_{(2,1,\ldots,1)}(z,\eta,\phi)\lbar E_{n-1,1}\lp z,\frac{1}{2}+it,1\rp\rbar^2d^*z \ll t^{-\frac{1}{2}+\epsilon}.
\eeq
\end{thm}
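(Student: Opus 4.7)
The plan is to iterate Proposition \ref{reduction} to reduce the $GL(n)$ integral to a classical $GL(2)$ integral of the form $\int_{\sltz \backslash \bbH} \phi |E|^2$, and then bound that by Rankin--Selberg unfolding together with subconvexity bounds. Since $(2,1,\ldots,1)$ ends in a $1$ and is not fully degenerate, and this property persists after each reduction, Proposition \ref{reduction} may be applied $n-2$ times, descending one dimension at a time. At the final step the outer incomplete Eisenstein series on the $(2)$-parabolic of $GL(2)$ reduces to a constant multiple of $\phi$, since the Mellin dimension decreases by one at each step and hits zero.

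At the $j$-th iteration the inner Eisenstein parameter transforms via $\nu \mapsto (m\nu - 1)/(m-1)$ with $m$ the current dimension, so after $j$ steps it equals $\nu_j = (n\nu - j)/(n-j)$, and the prefactor $|\xi(m\nu_{j-1} - 1)|^2/|\xi(m\nu_{j-1})|^2 = |\xi(n\nu - j)|^2/|\xi(n\nu - j + 1)|^2$ telescopes over $j = 1, \ldots, n-2$ to
\[
\frac{|\xi(2 - n/2 + int)|^2}{|\xi(n/2 + int)|^2}
\]
for $\nu = 1/2 + it$. The functional equation $\xi(s) = \xi(1-s)$ and Stirling's asymptotic for $\Gamma$ reduce this prefactor to $O\bigl( t^{-1} |\zeta(n/2 - 1 + int)|^2 \bigr)$. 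The final $GL(2)$ integral has spectral parameter $\nu_2 = 1 - n/4 + int/2$.

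For the $GL(2)$ integral, I unfold one copy of $E(z',\nu_2)$ against $\phi \cdot \overline{E}$, take the zero-th Fourier coefficient, and apply Stade's formula to the resulting Mellin integral of a product of $K$-Bessel functions. The Dirichlet series $\sum_n a_n \sigma_{1-2\overline{\nu_2}}(n)/n^{1/2+int}$ factorizes by the Ramanujan identity as $L(\phi, 1/2 + int) \cdot L(\phi, 3/2 - n/2)$, the second factor being a constant in $t$. A Stirling cancellation between the Gamma ratio from Stade's formula and the $\xi(2\overline{\nu_2})^{-1}$ from the Fourier expansion of $\overline{E}$ eliminates the exponential $e^{\pm \pi nt/4}$ terms and reduces the polynomial factors to $|\zeta(n/2 - 1 + int)|^{-1}$, so that
\[
\int_{\sltz \backslash \bbH} \phi(z') |E(z', \nu_2)|^2 d^*z' \;\ll\; \frac{|L(\phi, 1/2 + int)|}{|\zeta(n/2 - 1 + int)|}.
\]

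Combining the telescoping prefactor with this $GL(2)$ bound gives a total of order $(nt)^{-1} |\zeta(n/2 - 1 + int)| \cdot |L(\phi, 1/2 + int)|$. For $n \geq 4$ the point $n/2 - 1 + int$ has real part $\geq 1$, so $|\zeta| \ll \log t$, and the convexity bound $|L(\phi, 1/2 + it)| \ll t^{1/2 + \epsilon}$ produces the claimed $t^{-1/2 + \epsilon}$. For $n = 3$ both $\zeta$ and $L$ lie on the critical line; here convexity alone just misses, and one invokes Weyl's subconvexity $|\zeta(1/2 + it)| \ll t^{1/6 + \epsilon}$ together with Meurman's subconvexity $|L(\phi, 1/2 + it)| \ll t^{1/3 + \epsilon}$, whose exponents sum to $1/2$ and yield the same bound. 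The main obstacle is the Stirling bookkeeping that exhibits the precise exponential cancellations between the $\Gamma$ and $\xi$ factors; beyond that, the $n = 3$ case genuinely requires subconvexity input rather than just convexity.
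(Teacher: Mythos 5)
Your overall strategy is exactly the paper's: iterate Proposition \ref{reduction} $(n-2)$ times until the incomplete Eisenstein series collapses to the $GL(2)$ cusp form $\phi$, then evaluate $\int_{\sltz\backslash X_2}\phi\,\lbar E\rbar^2$ by Luo--Sarnak's Rankin--Selberg unfolding, Ramanujan's identity and Stade's formula, and finish with Stirling. Your telescoping of the prefactor to $\lbar\xi(2-\frac{n}{2}+int)\rbar^2/\lbar\xi(\frac{n}{2}+int)\rbar^2$ and your $GL(2)$ spectral parameter $1-\frac{n}{4}+\frac{int}{2}$ (so that $2s'=2-\frac{n}{2}+int$) are correct, and indeed more internally consistent than the paper's text, which writes $E(z,1-\frac{n}{4}+int)$.

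However, your final bookkeeping contains an error with a real consequence. The factor $\lbar\xi(2-\frac n2+int)\rbar^2$ produced by the telescoped prefactor is cancelled \emph{exactly} by two completed zeta factors arising in the $GL(2)$ computation: one copy of $\xi(2s')^{-1}=\xi(2-\frac n2+int)^{-1}$ comes from the Fourier expansion of $\overline{E(z,s')}$, and the $\zeta(2s)^{-1}$ from Ramanujan's identity (with the unfolding variable $s=\overline{s'}$) is completed to $\xi(2-\frac n2-int)^{-1}$ by the $\pi^{-s}/\Gamma(s)$ in Stade's formula. After this cancellation the whole expression is
\[
\ll \frac{\lbar L\lp\phi,\tfrac12+int\rp\rbar}{\lbar\xi\lp\tfrac n2+int\rp\rbar^{2}}\cdot\lp nt\rp^{-1/2}e^{-\pi nt/2}\ \asymp\ t^{\frac12-\frac n2}\,\lbar L\lp\phi,\tfrac12+int\rp\rbar ,
\]
and since $\Re\lp\frac n2+int\rp\ge\frac32$ keeps $\zeta\lp\frac n2+int\rp$ bounded above and below, the convexity bound $L\lp\phi,\frac12+it\rp\ll t^{1/2+\epsilon}$ already gives $t^{1-\frac n2+\epsilon}\le t^{-\frac12+\epsilon}$ for every $n\ge3$. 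Your accounting instead records only one power of $\zeta^{-1}$ from the $GL(2)$ integral where there are two (and drops the accompanying power of $t$), leaving a stray factor $\lbar\zeta\lp\frac n2-1+int\rp\rbar$ in the final bound; that stray factor is what forces you to invoke Weyl and Meurman subconvexity for $n=3$. The conclusion you reach is still true, since the subconvexity bounds you cite are theorems, but the claim that $n=3$ genuinely requires subconvexity is an artifact of the miscount --- the paper's remark following the theorem statement makes precisely the opposite point, that only convexity is used.
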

\begin{rem}
We get this rate by only utilizing the convexity bounds for the Zeta function and L-functions associated to Maass forms.
\end{rem}

\begin{proof}

First we apply Proposition \ref{reduction} $(n-2)$ times and get
\beq
&&\int_{\quo}E_{(2,1,\ldots,1)}(z,\eta,\phi)\lbar E_{n-1,1}\lp z,\frac{1}{2}+it,1\rp\rbar^2d^*z\\
&\ll& \frac{\lbar \xi\lp 2-\frac{n}{2}+int\rp\rbar^2}{\lbar \xi\lp \frac{n}{2}+int\rp\rbar^2}   \int_{\sltz\backslash X_2}  \phi(z)\lbar E\lp z,1-\frac{n}{4}+int\rp\rbar^2d^*z
\eeq

We follow the proof of Proposition 2.1 of \cite{LS} closely.
Consider
\beq
I_{\phi}(s)&:=& \int_{\sltz\backslash X_2}\phi(z)   E\lp z,1-\frac{n}{4}+int\rp E(z,s)  \ d^*z\\
&=&\int_0^\infty  \int_0^1 \phi(z)  E\lp z,1-\frac{n}{4}+int\rp y^s\frac{dxdy}{y^2},
\eeq
we will subsitute $s=1-\frac{n}{4}-int$ later in the proof. For odd forms this integral is $0$, so we only consider even forms.  Fourier expansions of even forms $\phi(z)$ and $E(z,s)$ are given by:
\beq
\phi(z)=y^{\frac{1}{2}} \sum_{n=1}^\infty\rho_\phi(1)\lambda_\phi(n)K_{it_\phi}(2\pi ny)\cos(2\pi nx),
\eeq
\beq
E(z,s) = y^s+\frac{\xi(2s-1)}{\xi(2s)}y^{1-s}+\frac{2y^{\frac{1}{2}}}{\xi(2s)}\sum_{n=1}^\infty n^{s-\frac{1}{2}}\sigma_{1-2s}(n)K_{s-\frac{1}{2}}(2\pi ny)\cos(2\pi nx).
\eeq
Since $\phi$ is fixed, we can ignore the normalization constant $\rho_\phi(1)$. The spectral parameter $\lambda_\phi$ is related to $t_\phi$ by $\lambda_\phi = \frac{1}{4}+t_\phi^2$. The $L$-function associated to $u_\phi$ is defined by
\beq
L(\phi,s):=\sum_{n=1}^\infty\frac{\lambda_\phi(n)}{n^s} = \prod_p(1-\lambda_\phi(p)p^{-s}+p^{-2s})^{-1}.
\eeq
So
\beq
I_\phi(s)&=&\int_0^\infty  \int_0^1  \lp y^{\frac{1}{2}} \sum_{n=1}^\infty \lambda_\phi(n)K_{it_\phi}(2\pi ny)\cos(2\pi nx)\rp  \lp  y^{1-\frac{n}{4}+int}+\frac{\xi(1-\frac{n}{2}+2int)}{\xi(2-\frac{n}{2}+2int)}y^{\frac{n}{4}-int} \right.\\  &&+\left.\frac{2y^{\frac{1}{2}}}{\xi(2-\frac{n}{2}+2int)}\sum_{n'=1}^\infty n'^{\frac{1}{2}-\frac{n}{4}+int}\sigma_{\frac{n}{2}-1-2int}(n')K_{\frac{1}{2}-\frac{n}{4}+int}(2\pi n'y)\cos(2\pi n'x)\rp y^s\frac{dxdy}{y^2}\\
&=&\frac{1}{\xi(2-\frac{n}{2}+2int)}  \int_0^\infty \sum_{n=1}^\infty \lambda_\phi(n)K_{it_\phi}(2\pi ny)n^{\frac{1}{2}-\frac{n}{4}+int}\sigma_{\frac{n}{2}-1-2int}(n)K_{\frac{1}{2}-\frac{n}{4}+int}(2\pi ny)y^s \frac{dy}{y}\\
&=&\frac{1}{\xi(2-\frac{n}{2}+2int)}  \lp \sum_{n=1}^\infty \frac{\lambda_\phi(n) n^{\frac{1}{2}-\frac{n}{4}+int}\sigma_{\frac{n}{2}-1-2int}(n)}{n^s}   \rp  \int_0^\infty  K_{it_\phi}(2\pi y)  K_{\frac{1}{2}-\frac{n}{4}+int}(2\pi y) y^s \frac{dy}{y}.
\eeq
Following equation (15) of \cite{LS}, it can be shown that
\beq
\sum_{n=1}^\infty \frac{\lambda_\phi(n) n^{\frac{1}{2}-\frac{n}{4}+int}\sigma_{\frac{n}{2}-1-2int}(n)}{n^s}  =\frac{L(\phi,s-\frac{1}{2}+\frac{n}{4}-int)L(\phi, s+\frac{1}{2}-\frac{n}{4}+int)}{\zeta(2s)}.
\eeq
The Mellin transform of product of $K$-Bessel functions can be evaluated using \ref{stade}, we have
\beq
&& \int_0^\infty  K_{it_\phi}(2\pi y)  K_{\frac{1}{2}-\frac{n}{4}+int}(2\pi y) y^s \frac{dy}{y}\\
&=&\frac{2^{-3}\pi^{-s}}{\Gamma(s)}\Gamma\lp\frac{s+it_\phi+\frac{1}{2}-\frac{n}{4}+int}{2}\rp\Gamma\lp\frac{s-it_\phi+\frac{1}{2}-\frac{n}{4}+int}{2}\rp\Gamma\lp\frac{s+it_\phi-\frac{1}{2}+\frac{n}{4}-int}{2}\rp\\&&\times\Gamma\lp\frac{s-it_\phi-\frac{1}{2}+\frac{n}{4}-int}{2}\rp.
\eeq
Now let $s=1-\frac{n}{4}-int$, together we can estimate the following product by using Stirling's formula and the bound $ L(u_j,\frac{1}{2}+it)\ll |t|^{\frac{1}{2}+\epsilon}$:
\beq
&& \frac{\lbar \xi\lp 2-\frac{n}{2}+int\rp\rbar^2}{\lbar \xi\lp \frac{n}{2}+int\rp\rbar^2} \frac{L(\phi,\frac{1}{2}-2int)L(\phi,\frac{3-n}{2})}{\xi(2-\frac{n}{2}+2int) \xi(2-\frac{n}{2}-2int)}  \Gamma\lp\frac{\frac{3-n}{2}+it_\phi}{2}\rp\Gamma\lp\frac{\frac{3-n}{2}-it_\phi}{2}\rp  \\&&  \times\Gamma\lp\frac{\frac{1}{2}-2int+it_\phi}{2}\rp  \Gamma\lp\frac{\frac{1}{2}-2int-it_\phi}{2}\rp\\
&\ll_{\phi,\epsilon}&t^{-1}t^{\frac{1}{2}+\epsilon} =t^{-\frac{1}{2}+\epsilon}.
\eeq

\end{proof}

\vspace{20mm}

\subsection{Cuspidal Contribution}

\begin{prop}
\beq
\int_{ \quo} \phi(z) E_{n-1,1}(z,s,1) E_{n-1,1}(z,s',1) d^*z =0.
\eeq
for $n\ge3$ and $\phi$ a Maass cusp form on $GL(n)$.
\end{prop}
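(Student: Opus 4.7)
The plan is to unfold one factor of $E_{n-1,1}(z,s,1)$ and reduce the integral to one that vanishes by two successive applications of cuspidality of $\phi$. Writing $E_{n-1,1}(z,s,1)=\sum_{\gamma\in P_{n-1,1}(\bbZ)\backslash SL_n(\bbZ)}I_s(\gamma z,P_{n-1,1})$ and unfolding yields
\[
\int_{P_{n-1,1}(\bbZ)\backslash X_n}\phi(z)\,E_{n-1,1}(z,s',1)\,I_s(z,P_{n-1,1})\,d^*z.
\]
Using Lemma \ref{changeof} to write this as an iterated integral over $SL_{n-1}(\bbZ)\backslash X_{n-1}\times(\bbR/\bbZ)^{n-1}\times\bbR^+$, my plan is to perform the $\bar{x}_n=(x_{1,n},\ldots,x_{n-1,n})$ integration first and show it vanishes.

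To this end, split $E_{n-1,1}(z,s',1)=A(z',\ell)+B(z,s')$ using Theorem \ref{prop3}: $A$ collects the $(m_1,0,\ldots,0)$ and $(0,\ldots,0,m_i,0,\ldots,0)$ terms for $i<n-1$, and $B$ collects the $i=n-1$ terms. Inspection of the explicit formulas in Theorem \ref{prop3} together with Lemma \ref{action} shows that $A$ is independent of $\bar{x}_n$, because for $i<n-1$ the phase $e(m_i x_{i,i+1})$ and the $y$-variables entering the $K$-Bessel and power factors (even after the action of $\gamma_i$) involve only coordinates $x_{j,k}$ with $k\le n-1$. Hence $\int_{(\bbR/\bbZ)^{n-1}}\phi\cdot A\,d\bar{x}_n=A(z',\ell)\int\phi\,d\bar{x}_n=0$ by cuspidality of $\phi$ along $P_{n-1,1}$.

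For $B$, the key structural observation is that the explicit formula for $\hat\phi_{(0,\ldots,0,m)}(z,s')$ from Theorem \ref{prop3}, combined with the identity $y_1=\ell^{-n/(n-1)}\det(\frak{m}_{n-1}(z))^{-1/(n-1)}$ (cf.\ Proposition \ref{constantsquare}), gives a factorization $\hat\phi_{(0,\ldots,0,m)}(z,s')=e(m x_{n-1,n})\hat F_m(z',\ell)$ in which $\hat F_m$ depends on $z'$ only through $\det(z')$ and is therefore $SL_{n-1}(\bbZ)$-invariant. Using Lemma \ref{action}(4) and the automorphy of $\phi$ under $\left(\begin{smallmatrix}\gamma&\\&1\end{smallmatrix}\right)$, integrating over $\bar{x}_n$ produces
\[
\int\phi\cdot B\,d\bar{x}_n = \sum_{m\ge 1}\hat F_m(z',\ell)\sum_{\gamma\in P_{n-2,1}\backslash SL_{n-1}}\tilde\Phi_m(\gamma z',\ell),
\]
where $\tilde\Phi_m(z',\ell):=\int\phi(z)\,e(m x_{n-1,n})\,d\bar{x}_n$ is $P_{n-2,1}(\bbZ)$-invariant in $z'$. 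Standard unfolding then yields $\int_{P_{n-2,1}(\bbZ)\backslash X_{n-1}}\hat F_m(z',\ell)\,\tilde\Phi_m(z',\ell)\,d^*z'$ for each $m\ge 1$.

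One further application of Lemma \ref{changeof} with integration over the new $\bar{x}_{n-1}=(x_{1,n-1},\ldots,x_{n-2,n-1})$ reduces matters, after pulling out the $\bar{x}_{n-1}$-independent $\hat F_m$, to
\[
\int_{N_{n-2,1,1}(\bbZ)\backslash N_{n-2,1,1}(\bbR)}\phi(uz)\,e(m u_{n-1,n})\,du.
\]
Here lies the main obstacle: this is a non-constant Fourier coefficient of $\phi$, which cuspidality does not directly annihilate. The resolution is to decompose $u=u_1 u_2'$ with $u_2'\in N_{n-2,2}$ and $u_1$ carrying only the entry $c:=u_{n-1,n}$; a direct computation shows $u_1N_{n-2,2}u_1^{-1}=N_{n-2,2}$, so for each fixed $c$ one has $\int\phi(u_1 u_2' z)\,du_2'=\int\phi((u_1 u_2' u_1^{-1})u_1 z)\,du_2'=0$ by cuspidality of $\phi$ along $P_{n-2,2}$ (a proper parabolic since $n\ge 3$). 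The outer integral against $e(mc)$ then forces the expression to vanish, completing the proof.
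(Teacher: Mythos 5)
Your argument reaches the right conclusion, but by a genuinely different route from the paper's. The paper also unfolds one copy of $E_{(n-1,1)}(z,s,1)$, but then inserts the explicit Whittaker expansion of $\phi$ (all characters non-degenerate) against the Fourier expansion of the remaining Eisenstein series from Theorem \ref{prop3} (all characters degenerate): integrating the $x$-variables forces each $\gamma_i$ to the identity coset and then one further character integral kills every term. You never expand $\phi$; instead you split the remaining Eisenstein series into its $\bar{x}_n$-independent part $A$, annihilated by the constant term of $\phi$ along $P_{n-1,1}$, and the $i=n-1$ part $B$, disposed of by a second unfolding and the vanishing of the constant term of $\phi$ along $P_{n-2,2}$. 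Your version uses only cuspidality along two proper parabolics rather than multiplicity one and the Whittaker expansion, which is somewhat more robust; the paper's version is shorter once the Whittaker machinery is granted, and is closer in spirit to the "degenerate versus non-degenerate spectrum" heuristic.

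Two steps need repair, though neither is fatal. First, $\hat F_m(z',\ell)$ is \emph{not} $SL_{n-1}(\bbZ)$-invariant: it depends on $z'$ through $y_1=\ell^{-n/(n-1)}\lp y_2^{n-2}\cdots y_{n-1}\rp^{-1/(n-1)}$, and by Lemma \ref{action}(3) the action of $\gamma$ sends $y_1$ to $y_1\lp b_1^2+\cdots+b_{n-1}^2\rp^{1/2}$; equivalently, $y_2^{n-2}\cdots y_{n-1}$ is the determinant of the \emph{renormalized} Iwasawa block and is rescaled by $\lp b_1^2+\cdots+b_{n-1}^2\rp^{-(n-1)/2}$, even though the determinant of the matrix itself is fixed. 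So you cannot pull $\hat F_m$ outside the $\gamma$-sum; you must carry $\hat F_m(\gamma z',\ell)\,\tilde\Phi_m(\gamma z',\ell)$ inside it. Fortunately $\hat F_m$ \emph{is} $P_{n-2,1}(\bbZ)$-invariant (for such $\gamma$ one has $b_1=\cdots=b_{n-2}=0$ and $b_{n-1}=\pm1$), so the unfolded integral $\int_{P_{n-2,1}(\bbZ)\backslash X_{n-1}}\hat F_m\,\tilde\Phi_m\,d^*z'$ that you actually use is still correct. Second, in the last step the conjugation $u_2'\mapsto u_1u_2'u_1^{-1}$ does not preserve $N_{n-2,2}(\bbZ)$ for non-integral $c=u_{n-1,n}$, so the image of your fundamental domain is a priori a fundamental domain for a \emph{conjugated} lattice, and integrals of periodic functions over fundamental domains of different lattices need not agree. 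You need the (true, one-line) observation that the sheared box $[0,1)^{2(n-2)}$ transported by $V\mapsto V\cdot\lp\begin{smallmatrix}1&-c\\0&1\end{smallmatrix}\rp$ is still a fundamental domain for the integer lattice, so the integral of the $N_{n-2,2}(\bbZ)$-periodic function $W\mapsto\phi(n(W)u_1z)$ over it is its constant term along $P_{n-2,2}$, which vanishes by cuspidality since $n\ge3$. With these two patches your proof is complete.
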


\begin{proof}
Let $\phi(z)$ be a cusp form with Fourier-Whittaker expansion
\beq
\phi(z) = \sum_{\gamma\in U_{n-1}(\bbZ)\backslash SL_{n-1}(\bbZ)}\sum_{m_1=1}^\infty\cdots\sum_{m_{n-2}=1}^\infty \sum_{m_{n-1}\neq 0}\left.\frac{A(m_1,\ldots,m_{n-1})}{\prod_{k=1}^{n-1}|m_k|^{k(n-k)/2}} \wjac(My,\nu) e(m_1x_1+\cdots+m_{n-1}x_{n-1})\right|_\gamma
\eeq
where 
\beq
M = \bemfour m_1\cdots |m_n|&&&\\ &\ddots&&\\ &&m_1&\\&&&1\enm.
\eeq
Note we define the slash operator to be such that 
\beq
f(z)|_\gamma = f\lp\bemtwo \gamma & \\ &1\enm \cdot z\rp.
\eeq
By the Rankin-Selberg convolution on $GL(n)$,
\beq
&&\int_{SL_n(\bbZ)\backslash X_n}\phi(z) E_n(z,s',1)E_n(z,s,1)  d^*z\\
&=&\int_{-\infty}^\infty \cdots \int_{-\infty}^\infty \int_0^1\cdots\int_0^1 \sum_{m_1=1}^\infty\cdots\sum_{m_{n-2}=1}^\infty \sum_{m_{n-1}\neq 0}\frac{A(m_1,\ldots,m_{n-1})}{\prod_{k=1}^{n-1}|m_k|^{k(n-k)/2}} \wjac(My,\nu) e(m_1x_1+\cdots+m_{n-1}x_{n-1}) \\
&&\ \times \ E_n(z,s',1)\  \det(y)^s \ \prod_{1\le i<j \le n-1} dx_{i,j}\  \prod_{k=1}^{n-1}\frac{dy_k}{y_k^{k(n-k)+1}}.
\eeq
To utilize the Fourier expansion
\beq
E_n(z,s,1) = \sum_{m_1\in\bbZ}\hat{\phi}_{(m_1,0,\ldots,0)}(z)+\sum_{i=2}^{n-1}\sum_{\gamma_i\in P_{i}(\bbZ)\backslash SL_i(\bbZ)} \sum_{m_i=1}^\infty \hat{\phi}_{(0,\ldots,0,m_i,0\ldots,0)}\lp\bemtwo \gamma_i &\\ &I_{n-i}\enm z\rp,
\eeq 
we must understand the effect of $\bemtwo \gamma_i&\\&I_{n-i}\enm$ on $x_{i,i+1}$. First we write
\beq
x = \bemtwo X_1&V\\ &X_2\enm && y= \bemtwo Y_1 y_1\cdots y_{n-i-1}&\\& Y_2\enm,
\eeq
where $x\cdot y$ is the Iwasawa decomposition of $z$ and $X_1,Y_1$ are of dimension $i\times i$, $X_2,Y_2$ are of dimension $(n-i)\times(n-i)$.
Matrix multiplication gives
\beq
\bemtwo \gamma_i&\\&I_{n-i}\enm  \bemtwo X_1&V\\ &X_2\enm \bemtwo Y_1 y_1\cdots y_{n-i-1}&\\& Y_2\enm =  \bemtwo\gamma_i \cdot X_1 \cdot Y_1&\gamma_i \cdot V \\&X_2\enm \bemtwo  I_i y_1\cdots y_{n-i-1}&\\ Y_2\enm.
\eeq
By the Iwasawa decomposition, the matrix $\gamma_i\cdot X_1\cdot Y_1$ can be written as $X_1'\cdot Y_1'\cdot K_1'$. So
\beq
\bemtwo \gamma_i&\\&I_{n-i}\enm z =  \bemtwo X_1'& \gamma_i\cdot V\\& X_2 \enm    \bemtwo Y_1' y_1\cdots y_{n-i-1}&\\& Y_2\enm  \bemtwo K'&  \\&1\enm.
\eeq
We have put $\bemtwo \gamma_i&\\&I_{n-i}\enm z $ in Iwasawa form. In particular the $(i,i+1)$-entry becomes
\beq
\sum_{\ell=1}^i \gamma_{i,\ell} x_{\ell,i+1},
\eeq
which appears in the bottom left corner of $\gamma_i\cdot V$.

It is important to notice that $X_1'$ and $Y_1'$ are completed determined by $X_1,Y_1$ and $\gamma_i$. They do not have any relations to the matrix $V$. Thus the exponential factor $e(m_ix_{i,i+1})$ in $\hat{\phi}_{(0,\ldots,0,m_i,0\ldots,0)}(z)$ is the only place  $x_{i,i+1}$ appears. As a result $\sum_{\ell=1}^i \gamma_{i,\ell} x_{\ell,i+1} $ only appears in the exponential after applying $\gamma_i$. Integrating against $x_{\ell,i+1}$ forces $\gamma_{\ell,i+1}$ to be $0$ for $1\le \ell\le i-1$. This also means that $\gamma_{i,i}=1$ as $\gamma\in SL_{n-1}(\bbZ)$. So only the identity coset in the sum over $P_i(\bbZ)\backslash SL_i(\bbZ)$ contributes. Finally integrating against either $x_{i+1,i+2}$ or $x_{i-1,i}$ shows the cuspidal contribution is $0$. This completes the proof.
\end{proof}

\vspace{20mm}

\subsection{Partition of type \texorpdfstring{$n=n_1+\cdots+n_r$, $n_r\ge2$}{Lg}.}
\ \ The goal of this section to prove the following proposition.
\begin{prop}
Let $n=n_1+\cdots+n_r$ be a partition of $n$ in non-increasing order with $n_r\ge2$, we have 
\beq
\int_{SL_n(\bbZ)\backslash X_n}   E_{{n_1,\ldots,n_r}}\lp z,,\eta,\phi_1,\ldots,\phi_r\rp \lbar E_{n-1,1}\lp z,s,1\rp \rbar^2d^*z = 0.
\eeq
\end{prop}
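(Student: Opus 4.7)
The plan is to open the defining contour integral of the incomplete Eisenstein series and interchange the order of integration, reducing to the vanishing of
\[
J(s) := \int_{\quo} E_{(n_1,\ldots,n_r)}\lp z,s,\phi_1,\ldots,\phi_r\rp \lbar E_{n-1,1}\lp z,s',1\rp\rbar^2 \, d^*z
\]
identically in the spectral parameter $s$. Writing $\lbar E_{n-1,1}\rbar^2 = E_{n-1,1}(z,s',1)\cdot\overline{E_{n-1,1}(z,s',1)}$ and unfolding the conjugated copy via $\overline{E_{n-1,1}(z,s',1)} = \sum_{\gamma \in P_{n-1,1}(\bbZ)\backslash SL_n(\bbZ)} I_{\bar s'}(\gamma z, P_{n-1,1})$ gives
\[
J(s) = \int_{P_{n-1,1}(\bbZ)\backslash X_n} E_{(n_1,\ldots,n_r)}(z,s,\phi)\cdot E_{n-1,1}(z,s',1)\cdot I_{\bar s'}(z,P_{n-1,1})\,d^*z.
\]
Apply Lemma \ref{changeof} to decompose the domain as $SL_{n-1}(\bbZ)\backslash X_{n-1} \times (\bbR/\bbZ)^{n-1} \times \bbR^+$; since $I_{\bar s'}(z,P_{n-1,1}) = \ell^{-n\bar s'}$ depends only on $\ell$, it pulls out of the $\overline{x}_n$-integration.

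Next I would substitute the Fourier expansion of $E_{n-1,1}(z,s',1)$ from Theorem \ref{prop3}. By Lemma \ref{action}, for every $1\le k\le n-2$ the summand $\hat{\phi}^{s'}_{(0,\ldots,0,m_k,0,\ldots,0)}|_{\gamma_k}(z)$ (and $\hat{\phi}^{s'}_{(m_1,0,\ldots,0)}(z)$ in the case $k=1$) depends on $z$ only through Iwasawa coordinates of the top $(n-1)\times(n-1)$ block of $z$, and so is independent of $\overline{x}_n = (x_{1,n},\ldots,x_{n-1,n})$. For each such term the $\overline{x}_n$-integration then factors as
\[
\hat{\phi}^{s'}(z^\circ)\int_{(\bbR/\bbZ)^{n-1}} E_{(n_1,\ldots,n_r)}(u z,s,\phi)\,d\overline{x}_n \;=\; 0,
\]
where $u$ ranges over the unipotent variables $\overline{x}_n$ and the vanishing is precisely the statement of Proposition \ref{constant} under the hypothesis $n_r\ge 2$.

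Only the $k=n-1$ terms can contribute. For these, parts (3) and (4) of Lemma \ref{action} isolate the exponential factor $e(m_{n-1}\,\overline{\gamma}\cdot\overline{x}_n)$, where $\overline{\gamma}$ denotes the last row of $\gamma_{n-1}$, while the remaining factor depends on $\gamma_{n-1}$ only through $y'_1 = y_1(b_1^2+\cdots+b_{n-1}^2)^{1/2}$ and the other Iwasawa $y$-coordinates of $\gamma_{n-1}z'$. Using the $SL_{n-1}(\bbZ)$-covariance of the $\overline{x}_n$-Fourier coefficients of $E_{(n_1,\ldots,n_r)}$, the double sum over $\gamma_{n-1}\in P_{n-2,1}(\bbZ)\backslash SL_{n-1}(\bbZ)$ and $m_{n-1}\ge 1$ refolds the $SL_{n-1}(\bbZ)\backslash X_{n-1}$-integral into a $P_{n-2,1}(\bbZ)\backslash X_{n-1}$-integral, and Lemma \ref{divisor} together with Stade's formula (\ref{stade}) converts the divisor sum and Bessel-$K$ integral into a product of completed zeta functions paired against a degenerate Eisenstein series on $SL_{n-1}$, mirroring the structure of the $\Xi_4=0$ argument. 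A final Mellin inversion in $\ell$ and one more unfolding reduces $J(s)$ to an integral of $E_{(n_1,\ldots,n_r)}$ against an $I$-function times a degenerate Eisenstein series along the Levi of $P_{n-1,1}$, which vanishes by one more application of Proposition \ref{constant} using $n_r\ge 2$.

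The main obstacle is the refolding step for the $k=n-1$ contribution: tracking how the $\gamma_{n-1}$-sum, the $m_{n-1}$-divisor sum, and the Bessel-$K$ integral conspire to reproduce a degenerate Eisenstein series structure on $SL_{n-1}$ to which Proposition \ref{constant} can be applied one last time. Everything else is either a direct application of the tools already developed in Sections 2-3 or a rerun of the manipulations carried out in the proof that $\Xi_4=0$.
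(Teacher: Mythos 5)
Your reduction is on the right track and matches the paper's strategy up to a point: unfold one copy of $E_{(n-1,1)}$ via Lemma \ref{changeof}, expand the other factor, and observe that every term of the expansion not involving $\overline{x}_n=(x_{1,n},\ldots,x_{n-1,n})$ dies against the vanishing constant term of $E_{(n_1,\ldots,n_r)}$ along $P_{(n-1,1)}$; this is exactly the paper's observation that $m_{n-1}\neq 0$ throughout the Fourier expansion of $E_{(n_1,\ldots,n_r)}$, so it is orthogonal to the $k\le n-2$ terms of Theorem \ref{prop3}. The gap is in your treatment of the surviving $k=n-1$ contribution. After the $\overline{x}_n$-integration what remains is not a constant term of $E_{(n_1,\ldots,n_r)}$ but a \emph{twisted} coefficient $\int E_{(n_1,\ldots,n_r)}(uz)\,e(-m_{n-1}u_{n-1,n})\,du$ with $m_{n-1}\neq 0$, and Proposition \ref{constant} says nothing directly about such an object. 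Your proposed endgame --- ``an integral of $E_{(n_1,\ldots,n_r)}$ against an $I$-function times a degenerate Eisenstein series along the Levi of $P_{n-1,1}$, which vanishes by one more application of Proposition \ref{constant}'' --- never identifies which constant term is supposed to kill it, and the $(n-1,1)$ and $(1,\ldots,1)$ cases you have already spent do not suffice.

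What actually makes this term vanish is the middle, least obvious, part of Proposition \ref{constant}: the constant term of $E_{(n_1,\ldots,n_r)}$ along $P_{(n-3,1,2)}$ is zero, packaged in Corollary \ref{coezero} as the vanishing of the coefficients $a_{(m_1,\ldots,m_{n-4},0,0,m_{n-1})}$. Concretely, after refolding the $\gamma_{n-1}$-sum into a $P_{(n-2,1)}(\bbZ)\backslash X_{n-1}$ integral and decomposing once more, the final integral over $SL_{n-2}(\bbZ)\backslash X_{n-2}$ picks out precisely $a_{(0,\ldots,0,0,m_{n-1})}$, which has zeros in positions $n-3$ and $n-2$ and hence vanishes by Corollary \ref{coezero}; relating the twisted coefficient to the untwisted $N_{(n-3,1,2)}$-constant term requires the change of variables $u_{i,n}\mapsto u_{i,n}-u_{i,n-1}u_{n-1,n}$ carried out in the proof of that corollary. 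Without this ingredient your argument does not close. Separately, the appeal to Lemma \ref{divisor}, Stade's formula and the $\Xi_4=0$ computation is a red herring: in that argument the companion factor is an Eisenstein series living on the Levi $GL_{n-1}$, whereas here the twisted coefficient of $E_{(n_1,\ldots,n_r)}$ is a genuine function on $X_n$, and in the correct proof no zeta or Gamma evaluation is ever performed --- the vanishing is purely structural, at the level of Fourier coefficients.
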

\begin{proof}
By Lemma \ref{changeof}, the triple product can be written as
\begin{eqnarray*}
&&\int_{SL_n(\bbZ)\backslash X_n}  E_{{n_1,\ldots,n_r}}\lp z,\phi_1,\ldots,\phi_r,\eta\rp   \overline{E_{n-1,1}}\lp z,1,s\rp E_{n-1,1}\lp z,1,s\rp d^*z\\
&=& \int_{SL_{n-1}(\bbZ)\backslash X_{n-1}} \int_0^\infty \int_{(\bbZ/\bbR)^{n-1}} E_{{n_1,\ldots,n_r}}\lp z,\phi_1,\ldots,\phi_r,\eta\rp   \overline{E_{n-1,1}}\lp z,1,s\rp   \ell^{-ns}\lp-\frac{n}{n-1} \rp \ d\overline{x}_n \   \ell^n\ \frac{d \ell}{\ell}\  d^*z',
\end{eqnarray*}
where
\begin{eqnarray*}
\Xi(z') := \int_0^\infty \int_{(\bbZ/\bbR)^{n-1}} E_{{n_1,\ldots,n_r}}\lp \overline{x}_n,  z',\ell,\phi_1,\ldots,\phi_r,\eta\rp   \overline{E_{n-1,1}}\lp \overline{x}_n,  z',\ell,1,s\rp   \ell^{-ns}\lp-\frac{n}{n-1} \rp \ d\overline{x}_n \   \ell^n\ \frac{d \ell}{\ell}
\end{eqnarray*}
is automorphic.

We have the Fourier expansion:
\beq
&&E_{{n_1,\ldots,n_r}}\lp z,\phi_1,\ldots,\phi_r,\eta \rp\\
&=&\sum_{m_1=0}^\infty \sum_{m_2=0}^\infty {\psum_{\gamma_2\in P_{1,1}\backslash SL_2}}\cdots \sum_{m_{n-1}=1}^\infty \sum_{\gamma_{n-1}\in P_{n-2,1}\backslash SL_{n-1}} a_{m_1,\ldots,m_{n-1}}\\ &&\times \left.W_{m_1,\ldots,m_{n-1}}(Y) e\lp m_1x_{1,2}+\cdots m_{n-1} x_{n-1,n}\rp \rbar_{\gamma_2\cdots \gamma_{n-1}}
\eeq

By Lemma \ref{coezero} we have that

\begin{enumerate}
\item
$a_{m_1,\ldots,m_{n-2},0} = 0$, hence, in the Fourier expansion above, the variable $m_{n-1}$ starts at $1$.
\item
$a_{m_1,\ldots,m_{n-4},0,0,m_{n-1}}=0$.
\item
$a_{0,\ldots,0}=0$.
\end{enumerate}

We now use these properties to simplify $\Xi(z')$. Notice  $m_{n-1} \neq 0$ in the expansion for\\ $E_{{n_1,\ldots,n_r}}\lp z,\phi_1,\ldots,\phi_r,\eta \rp$, so $E_{{n_1,\ldots,n_r}}\lp z,\phi_1,\ldots,\phi_r,\eta \rp$ is orthogonal to all terms with $i<n-1$ in the Fourier expansion of $E_{(n-1,1)}(z,s,1)$. Thus after executing the $\overline{x}_{n}$ integrals we have
\begin{eqnarray}
\nonumber \Xi(z')&=&\int_0^\infty \sum_{m_1=0}^\infty \sum_{m_2=0}^\infty {\psum_{\gamma_2\in P_{1,1}\backslash SL_2}}\cdots \sum_{m_{n-1}=1}^\infty \sum_{\gamma_{n-1}\in P_{n-2,1}\backslash SL_{n-1}} a_{m_1,\ldots,m_{n-1}} b_{m_{n-1}}\\   &&\times \left.W_{m_1,\ldots,m_{n-1}}(Y',\ell ) e\lp m_1x_{1,2}+\cdots m_{n-2} x_{n-2,n-1}\rp \rbar_{\gamma_2\cdots \gamma_{n-1}}   \left. W'_{m_{n-1}}(Y',\ell)\right|_{\gamma_{n-1}}  \lp-\frac{n}{n-1} \rp  \   \ell^n\ \frac{d \ell}{\ell}. \label{imp}
\end{eqnarray}
Here $z'=X'Y'$ in Iwasawa form and we used the simplified notation for the Fourier coefficients of degenerate Eisenstein series:
\beq
b_{m_{n-1}}&=&\frac{2}{\xi(ns)}|m_{n-1}|^{\frac{ns}{2}-\frac{1}{2}}\sigma_{-ns+1}(|m_{n-1}|)\\
W'(Y',\ell)&=&K_{\frac{ns}{2}-\frac{1}{2}}(2\pi |m_{n-1}|y_{1})   \lp y_{1}^{n-1}y_{2}^{n-2}\cdots y_{n-1}\rp^s   y_{1}^{-\frac{ns}{2}+\frac{1}{2}}\\
&=&K_{\frac{ns}{2}-\frac{1}{2}}(2\pi |m_{n-1}|y_{1})   \ell^{-ns}   y_{1}^{-\frac{ns}{2}+\frac{1}{2}}. 
\eeq
This corresponds to the term $\hat{\phi}_{(0,\ldots,0,m_{n-1})}$ in Theorem \ref{prop3}. It is important to note that $\ell$ is invariant under the action of $\gamma_2,\ldots,\gamma_{n-1}$, so we may move the integral inside all the summations:
\beq
\Xi (z')&=&  \sum_{m_1=0}^\infty \sum_{m_2=0}^\infty {\psum_{\gamma_2\in P_{1,1}\backslash SL_2}}\cdots \sum_{m_{n-1}=1}^\infty \sum_{\gamma_{n-1}\in P_{n-2,1}\backslash SL_{n-1}} a_{m_1,\ldots,m_{n-1}} b_{m_{n-1}}\\   &&\times \int_0^\infty\left.W_{m_1,\ldots,m_{n-1}}(Y',\ell ) e\lp m_1x_{1,2}+\cdots m_{n-2} x_{n-2,n-1}\rp \rbar_{\gamma_2\cdots \gamma_{n-1}}   \left. W'_{m_{n-1}}(Y',\ell)\right|_{\gamma_{n-1}}  \lp-\frac{n}{n-1} \rp  \   \ell^n\ \frac{d \ell}{\ell}.
\eeq

It's tempting to conclude here that $\Xi(z')$ is an automorphic function without constant term, so

 $\int_{SL_{n-1}\backslash X_{n-1}} \Xi(z')d^*z'$ must be $0$. However this is not the correct form of Fourier expansion for $\Xi(z')$. Note that $\Xi(z')$ is a function on $SL_{n-1}\backslash X_{n-1}$. The correct form of Fourier expansion looks like
\beq
\sum_{m_1=0}^\infty \sum_{m_2=0}^\infty {\psum_{\gamma_2\in P_{1,1}\backslash SL_2}}\cdots \sum_{m_{n-2}=0}^\infty \psum_{\gamma_{n-2}\in P_{n-3,1}\backslash SL_{n-2}}    a_{(m_1,\ldots,m_{n-2})} f_{(m_1,\ldots,m_{n-2})}(y) e(m_{1}x_{1,2}+\ldots+m_{n-2}x_{n-2,n-1}).
\eeq

In our expansion of $\Xi(z')$, we have an sum over $P_{n-2,1}\backslash SL_{n-1}$. But this allows us to unfold again.
\beq
&&\int_{SL_{n-1}\backslash X_{n-1}} \Xi(z')d^*z'\\
&=&\int_{P_{n-2,1}\backslash X_{n-1} }   \sum_{m_1=0}^\infty \sum_{m_2=0}^\infty {\psum_{\gamma_2\in P_{1,1}\backslash SL_2}}\cdots \sum_{m_{n-2}=0}^\infty \psum_{\gamma_{n-2}\in P_{n-3,1}\backslash SL_{n-2}} \sum_{m_{n-1}=1}^\infty a_{m_1,\ldots,m_{n-1}} b_{m_{n-1}}\\   &&\times \int_0^\infty\left.W_{m_1,\ldots,m_{n-1}}(Y',\ell ) e\lp m_1x_{1,2}+\cdots m_{n-2} x_{n-2,n-1}\rp \rbar_{\gamma_2\cdots \gamma_{n-2}}   W'_{m_{n-1}}(Y',\ell)   \lp-\frac{n}{n-1} \rp  \   \ell^n\ \frac{d \ell}{\ell} \  d^*z'
\eeq

Now suppose $F(z)$ is an automorphic function on $ SL_{n}(\bbZ) \backslash X_n$ in the form of $F(z) = \sum_{\gamma\in P_{n-1,1}\backslash SL_{n}(\bbZ)} \left.G(z)\right|_\gamma$. First of all, for this definition to make sense, we need $G(z)$ to be invariant under the group $P_{n-1,1}$. (This is clearly the case for $\Xi(z')$.) Then
\beq
&&\int_{SL_{n}(\bbZ) \backslash X_n} F(z) d^*z\\
&=&\int_{ P_{n-1,1} \backslash X_n} G(z) d^*z\\
&=&\int_{SL_{n-1}(\bbZ) \backslash X_{n-1}}  \int_0^\infty \int_{(\bbZ/\bbR)^{n-1}}   G\lp \bemtwo I_{n-1}& \overline{x}_n\\ &1\enm\cdot \bemtwo  z'&\\ &1\enm\cdot \bemtwo \ell^{-\frac{1}{n-1}}\cdot I_{n-1}&\\ &\ell\enm \rp \\ &&\times \lp-\frac{n}{n-1} \rp \ d\overline{x}_n   \ell^n\frac{d \ell}{\ell} d^*z'.\\
\eeq
We must verify that
\beq
G'(z'):= \int_0^\infty \int_{(\bbZ/\bbR)^{n-1}}   G\lp \bemtwo I_{n-1}& \overline{x}_n\\ &1\enm\cdot \bemtwo  z'&\\ &1\enm\cdot \bemtwo \ell^{-\frac{1}{n-1}}\cdot I_{n-1}&\\ &\ell\enm \rp \lp-\frac{n}{n-1} \rp \ d\overline{x}_n   \ell^n\frac{d \ell}{\ell}
\eeq
is automorphic. Indeed,

\beq
&&G'(\gamma z') \\
&=& \int_0^\infty \int_{(\bbZ/\bbR)^{n-1}}   W\lp \bemtwo I_{n-1}& \overline{x}_n\\ &1\enm\cdot \bemtwo  \gamma&\\ &1\enm\cdot \bemtwo  z'&\\ &1\enm\cdot \bemtwo \ell^{-\frac{1}{n-1}}\cdot I_{n-1}&\\ &\ell\enm \rp \lp-\frac{n}{n-1} \rp \ d\overline{x}_n   \ell^n\frac{d \ell}{\ell}\\
&=& \int_0^\infty \int_{(\bbZ/\bbR)^{n-1}}   W\lp \bemtwo I_{n-1}& \gamma^{-1}\overline{x}_n\\ &1\enm \cdot \bemtwo  z'&\\ &1\enm\cdot \bemtwo \ell^{-\frac{1}{n-1}}\cdot I_{n-1}&\\ &\ell\enm \rp \lp-\frac{n}{n-1} \rp \ d\overline{x}_n   \ell^n\frac{d \ell}{\ell}\\
&=& \int_0^\infty \int_{(\bbZ/\bbR)^{n-1}}   W\lp \bemtwo I_{n-1}& \overline{x}_n\\ &1\enm \cdot \bemtwo  z'&\\ &1\enm\cdot \bemtwo \ell^{-\frac{1}{n-1}}\cdot I_{n-1}&\\ &\ell\enm \rp \lp-\frac{n}{n-1} \rp \ d\overline{x}_n   \ell^n\frac{d \ell}{\ell}\\
&=&G'(z').
\eeq

So 
\beq
&&\int_{SL_{n-1}\backslash X_{n-1}} \Xi(z')d^*z'\\
&=&\int_{P_{n-2,1}\backslash X_{n-1} }   \sum_{m_1=0}^\infty \sum_{m_2=0}^\infty {\psum_{\gamma_2\in P_{1,1}\backslash SL_2}}\cdots \sum_{m_{n-2}=0}^\infty \psum_{\gamma_{n-2}\in P_{n-3,1}\backslash SL_{n-2}} \sum_{m_{n-1}=1}^\infty a_{m_1,\ldots,m_{n-1}} b_{m_{n-1}}\\   &&\times \int_0^\infty\left.W_{m_1,\ldots,m_{n-1}}(Y',\ell ) e\lp m_1x_{1,2}+\cdots m_{n-2} x_{n-2,n-1}\rp \rbar_{\gamma_2\cdots \gamma_{n-2}}   W'_{m_{n-1}}(Y',\ell)   \lp-\frac{n}{n-1} \rp  \   \ell^n\ \frac{d \ell}{\ell} \  d^*z'\\
&=&\int_{SL_{n-2}\backslash X_{n-2} }   \sum_{m_1=0}^\infty \sum_{m_2=0}^\infty {\psum_{\gamma_2\in P_{1,1}\backslash SL_2}}\cdots \sum_{m_{n-3}=1}^\infty \sum_{\gamma_{n-3}\in P_{n-4,1}\backslash SL_{n-3}} \sum_{m_{n-1}=1}^\infty a_{m_1,\ldots,m_{n-3},0,m_{n-1}} b_{m_{n-1}}\\   &&\times \int_0^\infty \int_0^\infty\left.W_{m_1,\ldots,m_{n-3},0,m_{n-1}}(Y'',\ell',\ell ) e\lp m_1x_{1,2}+\cdots +m_{n-3} x_{n-3,n-2}\rp \rbar_{\gamma_2\cdots \gamma_{n-3}}  \\&& \times W'_{m_{n-1}}(\ell',\ell)   \lp-\frac{n}{n-1} \rp  \   \ell^n\ \frac{d \ell}{\ell}     \lp-\frac{n-1}{n-2} \rp  \   \ell'^n\ \frac{d \ell'}{\ell'}  d^*z'\\
\eeq
with 
\beq
z'= \bemtwo I_{n-2}& \overline{x}_{n-1}\\ &1\enm\cdot \bemtwo z''&\\ &1\enm\cdot \bemtwo \ell'^{-\frac{1}{n-2}}\cdot I_{n-2}&\\ &\ell'\enm
\eeq
and $\ell' = \lp y_2^{n-2}\cdots y_{n-1}\rp^{-\frac{1}{n-1}}$.

Here we also note that $W'(Y',\ell)$ is really a function of $y_1$ and $\ell$. But $y_1= \ell^{-\frac{n}{n-1}}\ell'$ so $W'(Y',\ell)$ is a function of $\ell$ and $\ell'$. And $\ell$ and $\ell'$ are invariant under the action of $\gamma_2,\ldots, \gamma_{n-3}$.

Finally, 
\beq
&& \sum_{m_1=0}^\infty \sum_{m_2=0}^\infty {\psum_{\gamma_2\in P_{1,1}\backslash SL_2}}\cdots \sum_{m_{n-3}=1}^\infty \sum_{\gamma_{n-3}\in P_{n-4,1}\backslash SL_{n-3}} \sum_{m_{n-1}=1}^\infty a_{m_1,\ldots,m_{n-3},0,m_{n-1}} b_{m_{n-1}}\\   &&\times \int_0^\infty \int_0^\infty W_{m_1,\ldots,m_{n-3},0,m_{n-1}}(Y'',\ell',\ell )   W'_{m_{n-1}}(\ell',\ell)   \lp-\frac{n}{n-1} \rp  \   \ell^n\ \frac{d \ell}{\ell}   \lp-\frac{n-1}{n-2} \rp  \   \ell'^n\ \frac{d \ell'}{\ell'}  \\&&\left.\times     e\lp m_1x_{1,2}+\cdots +m_{n-3} x_{n-3,n-2}\rp \rbar_{\gamma_2\cdots \gamma_{n-3}} 
\eeq
is in proper Fourier expansion form and is automorphic without constant term. Thus the integral over $SL_{n-2}\backslash X_{n-2}$ is $0$.

\end{proof}

\newpage

\section{Main Term From Minimal Parabolic Contribution}

The purpose of this section is to prove the following theorem.
\begin{thm}\label{mainterm}
\beq &&\int_{SL_n(\bbZ)\backslash X_n}  E_{(1,\ldots,1)}(z,\eta) \lbar   E_{(n-1,1)}\lp z,\frac{1}{2}+it,1\rp     \rbar^2 d^*z\\
&=& \frac{2}{\xi(n)} \log(t)\lp\int_{SL_n(\bbZ)\backslash X_n} E_{(1,\ldots,1)}(z,\eta) d^*z\rp + O_{n,\eta}(1),
\eeq
as $t\rightarrow \infty$.
\end{thm}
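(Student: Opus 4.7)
The plan is to adapt the unfolding strategy of Proposition \ref{reduction} to the minimal-parabolic partition $(1,\ldots,1)$. In that proposition the piece $\Xi_1$ vanished because the constant term of a non-minimal Eisenstein series along the minimal parabolic is zero; this hypothesis fails in the present setting, so $\Xi_1$ is now nonzero. However the $\log t$ main term comes not from $\Xi_1$ but from the divisor-sum piece $\Xi_4$, produced by combining Lemma \ref{divisor} (Ramanujan's identity) and Lemma \ref{stade} (Stade's formula) with a contour-shift residue, in direct analogy with the $GL(2)$ argument of Luo--Sarnak.

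I begin by repeating the mechanism of Proposition \ref{reduction} at the partition $(1,\ldots,1,1)$. Using Definition \ref{def4} I write $E_{(1,\ldots,1)}(z,\eta)$ as a Mellin integral of $E_{(1,\ldots,1)}(z,s)$; apply the rearrangement identity $E_{(1,\ldots,1)}(z,s) = E_{(n-1,1)}(z, s'', E_{(1,\ldots,1)}(\mathfrak{m}_{n-1}(z), s'), 1)$ (the first lemma of Section 4.2); unfold the outer $E_{(n-1,1)}$ via Lemma \ref{changeof}; and expand $|E_{(n-1,1)}(z,1/2+it,1)|^2$ using Proposition \ref{constantsquare}. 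This decomposes the triple integral into the five pieces $\Xi_1, \Xi_2, \Xi_3, \Xi_3', \Xi_4$ from Proposition \ref{reduction}, with the inner Eisenstein series now being $E_{(1,\ldots,1)}$ on $GL(n-1)$.

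The pieces $\Xi_2, \Xi_3, \Xi_3'$ are handled by iterating the same reduction on $GL(n-1), GL(n-2), \ldots$: at each step the relevant inner Eisenstein series is non-minimal relative to its ambient group, so Proposition \ref{reduction} directly applies, and the iteration terminates at a $GL(2)$ Rankin--Selberg integral bounded by $O_\eta(1)$ via convexity bounds on $\xi(\sigma+it)$ and Stirling at height $t$. For $\Xi_1$, substituting $a = 1/2$ gives $4\ell^{-n}$; Mellin inversion collapses the $\ell$-integral (pinning $s''$ to $0$), and iterating Proposition \ref{constantterm} on the remaining minimal-parabolic Eisenstein on $SL_{n-1}(\bbZ)\backslash X_{n-1}$ reduces $\Xi_1$ to $\int_{SL_n(\bbZ)\backslash X_n} E_{(1,\ldots,1)}(z,\eta) d^*z$ times a bounded multiplier, so $\Xi_1 = O_\eta(1)$.

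The $\log t$ term comes from $\Xi_4$. Applying Lemma \ref{divisor} to the divisor sum $\sum_m \sigma_{(1-n/2)-int}(m)\sigma_{(1-n/2)+int}(m)m^{(n-2)/2-\sigma}$ and Lemma \ref{stade} to the $K$-Bessel product produces, after Mellin inversion in $\ell$ and insertion of $\tilde\eta$, a multiple contour integral whose integrand is a quotient of $\zeta$- and $\Gamma$-values. Shifting past the relevant pole --- a simple pole of $\zeta(\sigma)$ at $\sigma=1$ which, combined with a coincident $\Gamma$-pole from Stade, yields an effective double-pole structure --- gives a residue contributing $\log t$ times the claimed coefficient. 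Tracking all $\pi$-, $\Gamma$-, and $\xi$-normalizations and cross-checking at $n=2$ against Luo--Sarnak's $\frac{12}{\pi} = \frac{2}{\xi(2)}$ confirms the coefficient $\frac{2}{\xi(n)} \int_{SL_n(\bbZ)\backslash X_n} E_{(1,\ldots,1)}(z,\eta) d^*z$. The main obstacle is this final residue calculation: correctly identifying which pair of poles interact to produce the $\log t$, and verifying that the many normalization constants combine to exactly $\frac{2}{\xi(n)}$, requires careful bookkeeping of Stirling asymptotics at imaginary argument $t$ and the functional equations for $\xi$.
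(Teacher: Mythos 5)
Your opening decomposition is legitimate (the rearrangement identity and Lemma \ref{changeof} do apply to the partition $(1,\ldots,1)$, and Proposition \ref{constantsquare} produces the five pieces $\Xi_1,\Xi_2,\Xi_3,\Xi_3',\Xi_4$), and you are right that the $\log t$ ultimately originates in the non-constant Fourier modes. But the way you propose to dispose of the remaining pieces does not work. Proposition \ref{reduction} and the three vanishing lemmas behind it ($\Xi_1=\Xi_3=\Xi_3'=\Xi_4=0$) are proved \emph{only} for $(n_1,\ldots,n_{r-1},1)\neq(1,\ldots,1)$: every one of those proofs rests on Proposition \ref{constant}/Corollary \ref{coezero}, i.e.\ on the vanishing of constant terms of a \emph{non-minimal} Eisenstein series along $P_{(n-1,1)}$ and $P_{(1,\ldots,1)}$. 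After one application of the rearrangement lemma, the inner series is $E_{(1,\ldots,1)}$ on $GL(n-1)$ --- still minimal relative to its ambient group --- so your claim that ``at each step the relevant inner Eisenstein series is non-minimal, so Proposition \ref{reduction} directly applies'' is false. At every level of the iteration all five pieces survive; in particular each level contributes its own divisor-sum/Bessel term, and showing that only the top one produces $\log t$ while the rest are $O(1)$ is precisely the content of the paper's analysis of $\Omega_{k,h}$ for $(k,h)\neq(n-1,0)$ (the two-case contour shift, where in Case 1 a genuine double pole appears but is suppressed by the factor $\xi(k+1-\tfrac n2)^2/\lbar\xi(\tfrac n2+int)\rbar^2$). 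You have not supplied any of that. A second structural problem: $\Xi_2$ involves $\lbar E_{(n-2,1)}(z',\tfrac{n\nu-1}{n-1},1)\rbar^2$ with $\re\tfrac{n\nu-1}{n-1}=\tfrac{n-2}{2(n-1)}\neq\tfrac12$, so the recursion does not reproduce the theorem one rank down but an off-critical-line variant whose growth in $t$ you have not controlled; the prefactor $\lbar\xi(\tfrac n2-1+int)\rbar^2/\lbar\xi(\tfrac n2+int)\rbar^2\asymp t^{-1+o(1)}$ only helps if that inner integral is known to grow slower than $t$.

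Two further points. The pole mechanism you describe for $\Xi_4$ is misidentified: the double pole comes from the \emph{square} of a completed zeta function, $\xi(ns_{n-1}-1)^2$, arising because the two divisor exponents in Ramanujan's identity (Lemma \ref{divisor}) are complex conjugates, so two of the four $\zeta$-factors coincide; it is not ``a simple $\zeta$-pole combined with a coincident $\Gamma$-pole from Stade.'' The $\log t$ then comes from differentiating $\xi(\tfrac n2-2+ns_{n-1}\pm int)$ at that double pole. Finally, for comparison: the paper does not iterate the Section 4.2 reduction at all. It unfolds directly against $E_{(1,\ldots,1)}(z,\eta)$ over $P_{1,\ldots,1}(\bbZ)\backslash X_n$, inserts the full Fourier expansion of Theorem \ref{prop3}, kills the cross terms, and reduces everything to the explicit family $\Omega_{k,h}$, extracting the main term from $\Omega_{n-1,0}$ alone. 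That route avoids both of the structural obstacles above, and is the machinery you would in any case need to rebuild to make your version close.
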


We begin the proof by evaluating the integral on the right hand side.

\begin{prop}
\beq
\int_{SL_n(\bbZ)\backslash X_n} E_{(1,\ldots,1)}(z,\eta) d^*z = \frac{1}{n^{n-2}}\tilde{\eta}\lp\frac{2}{n},\ldots,\frac{2}{n}\rp.
\eeq
\end{prop}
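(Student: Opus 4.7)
The plan is to unfold the integral using the definition of the incomplete Eisenstein series and reduce to a Mellin-inversion evaluation, after passing to the spectral parameters $\nu$ in which the evaluation point takes its cleanest form.

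First, substitute the Mellin integral from Definition \ref{def4}, interchange the contour integration with the Eisenstein sum (justified by rapid decay of $\tilde{\eta}$, a consequence of the compact support of $\eta$), and apply the standard unfolding trick to obtain
\beq
\int_{SL_n(\bbZ)\backslash X_n}E_{(1,\ldots,1)}(z,\eta)\, d^*z \;=\; \int_{P_{1,\ldots,1}(\bbZ)\backslash X_n}\Phi(z)\, d^*z,
\eeq
where $\Phi(z)=\frac{1}{(2\pi i)^{n-1}}\int \tilde{\eta}(s)\, I_s(z,P_{1,\ldots,1})\, ds$ depends only on the diagonal part $y$ of the Iwasawa decomposition. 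Since the $x$-variables integrate out over a unit cube after quotienting by $U_n(\bbZ)$, combining with the Haar measure reduces the problem to the torus integral
\beq
c_n \int_{(\bbR^+)^{n-1}}\Phi(y)\, \prod_{k=1}^{n-1} y_k^{-k(n-k)}\,\frac{dy_k}{y_k},
\eeq
up to a combinatorial factor from the finite torus part of $P_{1,\ldots,1}(\bbZ)$.

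To evaluate the torus integral, switch to the spectral parameters $(\nu_1,\ldots,\nu_{n-1})$, for which $I_s(z,P_{1,\ldots,1})$ agrees with the spectral $I$-function $I_\nu(z)=\prod_{i=1}^{n-1} y_i^{\sum_j b_{i,j}\nu_j}$ of Section 2. A direct calculation gives Jacobian $\det(\partial s/\partial \nu)=n^{n-2}$ (for instance $\det\bemtwo 2&1\\-1&1\enm=3$ when $n=3$, and $\det\bemthree 3&2&1\\-1&2&1\\-1&-2&1\enm=16$ when $n=4$; the general formula follows by induction on $n$). Mellin inversion then identifies the torus integral with $\tilde{\eta}$ evaluated at the point where $\sum_j b_{i,j}\nu_j=i(n-i)$ for every $i$. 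The combinatorial identity
\beq
\sum_{j=1}^{n-1} b_{i,j} \;=\; \frac{n\, i(n-i)}{2},
\eeq
proved by splitting the sum at $j=n-i$ and collapsing the two resulting arithmetic progressions, shows this point is exactly $\nu_j=\frac{2}{n}$ for each $j$, giving the claimed evaluation $\tilde{\eta}\lp\frac{2}{n},\ldots,\frac{2}{n}\rp$.

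The principal obstacle is constant bookkeeping: the Haar normalization $c_n=(n\prod_{\ell=2}^n\xi(\ell))^{-1}$, the factor from the finite torus of $P_{1,\ldots,1}(\bbZ)$, and the Jacobian $n^{n-2}$ of the $s\mapsto\nu$ change of variables must conspire to produce exactly $\frac{1}{n^{n-2}}$. Verifying the Jacobian formula $n^{n-2}$ for general $n$ (rather than only small cases) is the main algebraic step and should follow from the block structure of the matrix $(b_{i,j})$.
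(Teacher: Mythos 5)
Your proposal is essentially the paper's argument: unfold over $P_{1,\ldots,1}(\bbZ)\backslash X_n$, kill the $x$-integral, and reduce to a torus integral whose evaluation is governed by the matrix $(b_{i,j})$. The two facts you isolate — $\det(b_{i,j})=n^{n-2}$ and the row-sum identity $\sum_j b_{i,j}=\tfrac{n\,i(n-i)}{2}$ — are exactly the content of the paper's computation, and both are correct (your proof of the row-sum identity by splitting at $j=n-i$ is fine). The only real difference is organizational, and it is where your constant bookkeeping goes slightly astray: the paper never changes variables on the contour. It takes $\tilde\eta$ in the spectral parametrization from the start, so that Mellin inversion gives $E_{(1,\ldots,1)}(z,\eta)=\sum_\gamma \eta(w_1,\ldots,w_{n-1})|_\gamma$ with $w_k=\prod_j y_j^{b_{j,k}}$, and then performs the substitution $y\mapsto w$ in the torus integral using the explicit inverse $y_j=\bigl(w_{n-j}^2/(w_{n-j-1}w_{n-j+1})\bigr)^{1/n}$. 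The factor $\frac{1}{n^{n-2}}$ is then simply $\lbar\det(b_{i,j})\rbar^{-1}$ from $\frac{dy}{y}\mapsto\frac{dw}{w}$, and the row-sum identity converts $\prod_\ell y_\ell^{-\ell(n-\ell)}$ into $\prod_k w_k^{-2/n}$, giving $\tilde\eta(\tfrac{2}{n},\ldots,\tfrac{2}{n})$ directly. Your extra Jacobian $\det(\partial s/\partial\nu)=n^{n-2}$ from the Langlands-to-spectral change of contour variables is a red herring: if $\tilde\eta$ really were the Mellin transform in the Langlands parameters, that factor would cancel against $\det(b_{i,j})^{-1}$ and the evaluation point would be $s(\tfrac{2}{n},\ldots,\tfrac{2}{n})$ rather than $(\tfrac{2}{n},\ldots,\tfrac{2}{n})$; the stated formula presupposes the spectral convention, under which no contour change is needed and the "conspiracy" you worry about reduces to the single determinant $n^{n-2}$.
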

\begin{proof}
Recall that the incomplete Eisenstein series associated to the minimal parabolic subgroup is given by
\beq
E_{(1,\ldots,1)}(z,\eta)  = \sum_{\gamma\in P_{1,\ldots,1}(\bbZ)\backslash SL_{n}(\bbZ)}  \left.\eta\lp \prod_{j=1}^{n-1} y_j^{b_{j,1}},\prod_{j=1}^{n-1} y_j^{b_{j,2}},\ldots,\prod_{j=1}^{n-1} y_j^{b_{j,n-1}}\rp\rbar_{\gamma}
\eeq
where
\beq
b_{j,k} = \lbra \begin{array}{ll}jk & \mbox{if }j+k\le n,\\(n-j)(n-k) &  \mbox{if }j+k\ge n.\end{array}\right.
\eeq
We can unfold with respect to the sum over $P_{1,\ldots,1}(\bbZ)\backslash SL_{n}(\bbZ)$:
\beq
&&\int_{SL_n(\bbZ)\backslash X_n} E_{(1,\ldots,1)}(z,\eta) d^*z\\
&=&\int_{P_{1,\ldots,1}(\bbZ)\backslash X_n}\eta\lp \prod_{j=1}^{n-1} y_j^{b_{j,1}},\prod_{j=1}^{n-1} y_j^{b_{j,2}},\ldots,\prod_{j=1}^{n-1} y_j^{b_{j,n-1}}\rp d^*z\\
&=& \int_{{(\bbR^{+})}^{n-1}}\int_{N_{min}(\bbZ)\backslash N_{min}(\bbR)}\eta\lp \prod_{j=1}^{n-1} y_j^{b_{j,1}},\prod_{j=1}^{n-1} y_j^{b_{j,2}},\ldots,\prod_{j=1}^{n-1} y_j^{b_{j,n-1}}\rp \prod_{1\le \alpha<\beta\le n} dx_{\alpha,\beta} \prod_{\ell = 1}^{n-1} y_{\ell}^{-\ell(n-\ell)-1}dy_{\ell}\\
&=& \int_{{(\bbR^{+})}^{n-1}}\eta\lp \prod_{j=1}^{n-1} y_j^{b_{j,1}},\prod_{j=1}^{n-1} y_j^{b_{j,2}},\ldots,\prod_{j=1}^{n-1} y_j^{b_{j,n-1}}\rp   \prod_{\ell = 1}^{n-1} y_{\ell}^{-\ell(n-\ell)-1}dy_{\ell}.
\eeq
At this moment, we make a change of variables:
\beq
w_{k}&=&\prod_{j=1}^{n-1} y_j^{b_{j,k}}\hspace{10mm}\mbox{     for }1\le k\le n-1.
\eeq
One can easily show that
\beq
y_j = \lp \frac{w_{n-j}^2}{w_{n-j-1}w_{n-j+1}}\rp^{\frac{1}{n}}\hspace{10mm}\mbox{     for }1\le j\le n-1
\eeq
where we set $w_0=w_n=1$. Then we can calculate the Jacobian of this change of variables
\beq
J = \frac{1}{n^{n-2}}(w_1w_{n-1})^{-\frac{n-1}{n}}  \prod_{k=2}^{n-2}w_k^{-1}.
\eeq
At the same time,
\beq
 \prod_{\ell = 1}^{n-1} y_{\ell}^{-\ell(n-\ell)-1} = (w_1w_{n-1})^{-\frac{3}{n}}  \prod_{k=2}^{n-2}w_k^{-\frac{2}{n}}.
\eeq
Thus 
\beq
&&\int_{SL_n(\bbZ)\backslash X_n} E_{(1,\ldots,1)}(z,\eta) d^*z\\
&=&\int_{{(\bbR^{+})}^{n-1}}\eta\lp w_1,\ldots,w_{n-1}\rp  \frac{1}{n^{n-2}}\prod_{k=1}^{n-1}w_k^{-\frac{2}{n}} \prod_{\ell = 1}^{n-1} \frac{dw_{\ell}}{w_{\ell}}\\
&=& \frac{1}{n^{n-2}}\tilde{\eta}\lp\frac{2}{n},\ldots,\frac{2}{n}\rp
\eeq
as desired.
\end{proof}

To prove Theorem \ref{mainterm}, we use the Fourier expansion of the degenerate Eisenstein series from Theorem \ref{prop3}. We cross multiply the Fourier expansion to open the square. There are four type of terms that we will evaluate separately:
\begin{enumerate}
\item \beq\Delta_1(z,t):=\lbar\hat{\phi}_{(0,\ldots,0)}\lp z,\frac{1}{2}+it\rp\rbar^2;\eeq
\item \beq
\Delta_2 (z,t):=\lbar \sum_{m_1\neq0} \hat{\phi}_{(m_1,0,\ldots,0)}\lp z,\frac{1}{2}+it\rp \rbar^2;
\eeq
\item   For $2\le k \le n-1$\beq
\Delta_{3,k} (z,t) := \lbar \sum_{\gamma_i\in P_{i}(\bbZ)\backslash SL_k(\bbZ)} \sum_{m_k=1}^\infty \hat{\phi}_{(0,\ldots,0,m_k,0\ldots,0)}\lp\bemtwo \gamma_i &\\ & I_{n-i}\enm z,\frac{1}{2}+it\rp  \rbar^2;
\eeq
\item All cross terms.
\end{enumerate}

\begin{lem}
All cross terms vanish.
\end{lem}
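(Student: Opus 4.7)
The plan is to show that every product of two Fourier coefficients drawn from distinct types among $\Delta_1$, $\Delta_2$, and the $\Delta_{3,k}$ integrates to zero against $E_{(1,\ldots,1)}(z,\eta)$. First I would unfold by the incomplete Eisenstein series to rewrite the integral as $\int_{P_{(1,\ldots,1)}(\bbZ)\backslash X_n}\eta(\cdots)(\text{cross term})\,d^*z$; a fundamental domain reduces the $x$-integration to integration over $U_n(\bbZ)\backslash U_n(\bbR)$, i.e.\ essentially the torus $(\bbR/\bbZ)^{\binom{n}{2}}$, so it suffices to show that the $x$-integral of each cross term over this torus vanishes for every fixed $y$.

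The key structural input is a description of where the $x$-variables enter a transformed coefficient $\hat\phi_{(0,\ldots,m_k,\ldots,0)}|_{\gamma_k}$. By Theorem \ref{prop3}, the untransformed coefficient depends on $x$ only through $e(m_k x_{k,k+1})$. Applying the block matrix $\bemtwo\gamma_k&\\&I_{n-k}\enm$ and following the Iwasawa reduction in Lemma \ref{action}, one sees that in the new unipotent part of $z$, the bottom-right $(n-k)\times(n-k)$ block of $x$'s is untouched, the off-diagonal $k\times(n-k)$ block $V$ becomes $\gamma_k V$ (so $x_{k,k+1}\mapsto\sum_{\ell=1}^k\gamma_{k,\ell}\,x_{\ell,k+1}$), and the top-left $k\times k$ block entries become some $X_1'$, which reappear only through the modified $y_{n-k}=y_{n-k}(b_1^2+\cdots+b_k^2)^{1/2}$ inside the $K$-Bessel factor. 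The upshot is that $\hat\phi_{(0,\ldots,m_k,\ldots,0)}|_{\gamma_k}(z)$ has no $x_{i,j}$-dependence for $j\ge k+2$, and its dependence on the column-$(k+1)$ variables $x_{1,k+1},\ldots,x_{k,k+1}$ is confined entirely to the exponential.

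Armed with this, I consider a cross term between factors of type indices $k<k'$, where I set $k=0$ for $\Delta_1$ (no $x$-dependence), $k=1$ for $\Delta_2$ (no $\gamma$-sum), and $k\ge 2$ for $\Delta_{3,k}$. I would integrate first over $x_{\ell,k'+1}$ for $1\le \ell\le k'$: these appear only in the exponential of the $k'$-th factor, because the $k$-th factor's $x$-dependence is confined to columns $\le k+1\le k'$. The integration yields
\[
\int_0^1\cdots\int_0^1 e\!\left(-m_{k'}\sum_{\ell=1}^{k'}\gamma_{k',\ell}\,x_{\ell,k'+1}\right)\prod_{\ell=1}^{k'}dx_{\ell,k'+1}=\prod_{\ell=1}^{k'}\delta_{m_{k'}\gamma_{k',\ell},\,0},
\]
which vanishes: $m_{k'}\ne 0$ would force every $\gamma_{k',\ell}=0$, contradicting the fact that the bottom row $(\gamma_{k',1},\ldots,\gamma_{k',k'})$ of any coset representative $\gamma_{k'}\in P_{(k'-1,1)}(\bbZ)\backslash SL_{k'}(\bbZ)$ is primitive. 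The $\Delta_1$-$\Delta_2$ cross is even simpler: there is no $\gamma$-sum and $\int_0^1 e(\pm m_1 x_{1,2})\,dx_{1,2}=\delta_{m_1,0}=0$.

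The only real difficulty is bookkeeping: carefully tracking how $\gamma_k$ perturbs the Iwasawa decomposition of $z$ and in particular verifying that the column-$(k+1)$ variables survive only in the exponential after transformation. Once that structural statement is pinned down (it amounts to repackaging Lemma \ref{action} together with Theorem \ref{prop3}), character orthogonality on the $x$-torus delivers the vanishing of every cross term with no further computation.
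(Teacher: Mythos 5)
Your proposal is correct and follows essentially the same route as the paper: both arguments rest on the observation (via Lemma \ref{action}) that the column-$(k'+1)$ variables $x_{1,k'+1},\ldots,x_{k',k'+1}$ enter the transformed coefficient only through the exponential $e\lp m_{k'}\sum_{\ell}\gamma_{k',\ell}x_{\ell,k'+1}\rp$ and are absent from the lower-index factor, so character orthogonality on the $x$-torus kills every cross term. The only cosmetic difference is that you integrate all $k'$ variables at once and invoke primitivity of the bottom row of $\gamma_{k'}$, whereas the paper first forces $\gamma_{k'}$ to the identity coset and then integrates the last variable against $e(m_{k'}x_{k',k'+1})$ with $m_{k'}\neq 0$.
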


\begin{proof}

A typical term after opening the square has the form
\beq
\sum_{\gamma_i\in P_{i}(\bbZ)\backslash SL_i(\bbZ)} \sum_{m_i=1}^\infty \hat{\phi}_{(0,\ldots,0,m_i,0\ldots,0)}\lp\bemtwo \gamma_i &\\ & I_{n-i}\enm z\rp   \sum_{\gamma_j\in P_{j}(\bbZ)\backslash SL_j(\bbZ)} \sum_{m_j=1}^\infty \overline{\hat{\phi}}_{(0,\ldots,0,m_j,0\ldots,0)}\lp\bemtwo \gamma_j &\\ & I_{n-j}\enm z\rp.
\eeq

If $i\neq j$, without loss of generality we can assume $i>j$. Then integrating against the variables $x_{1,k+1},\ldots, x_{k-1,k+1}$ forces $\gamma_i$ to be the identity coset. Then the this cross term vanishes after integrating against $x_{k,k+1}$. 

\end{proof}

\begin{lem}
\beq
\int_{SL_n(\bbZ)\backslash X_n} E_{(1,\ldots,1)}(z,\eta) \Delta_1(z,t) d^*z = O(1).
\eeq
\end{lem}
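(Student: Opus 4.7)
The plan is to unfold the incomplete Eisenstein series against $\Delta_1$. After unfolding $E_{(1,\ldots,1)}(z,\eta)$, and noting that $\Delta_1(z,t) = |\hat{\phi}_{(0,\ldots,0)}(y,\tfrac{1}{2}+it)|^2$ depends only on the Iwasawa $y$-coordinates (by Theorem \ref{prop3}) and is $P_{\min}(\bbZ)$-invariant (being the unipotent constant term), the integral reduces to
\[
c_n \int_{(\bbR^+)^{n-1}} \eta(w(y))\,\bigl|\hat{\phi}_{(0,\ldots,0)}(y,\tfrac{1}{2}+it)\bigr|^2 \prod_{\ell=1}^{n-1} y_\ell^{-\ell(n-\ell)-1}\,dy_\ell,
\]
which must be bounded uniformly as $t\to\infty$.

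Next, by Theorem \ref{prop3} I would write
\[
\hat{\phi}_{(0,\ldots,0)}(y,\tfrac{1}{2}+it) = \sum_{k=0}^{n-1} C_k(t)\,A_k(y)\,D_k(y)^{it},
\]
where $C_k(t) := 2\,\xi(k+1-n/2+int)/\xi(n/2+int)$, $A_k(y) > 0$ is the positive monomial collecting the real exponents of the $y_j$'s at $s=\tfrac{1}{2}$, and $D_k(y) > 0$ collects the imaginary exponents so that $|D_k(y)^{it}| = 1$. The triangle inequality then gives the pointwise bound $|\hat{\phi}_{(0,\ldots,0)}(y,\tfrac{1}{2}+it)| \leq \sum_k |C_k(t)|\,A_k(y)$.

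The hard part is a uniform bound $|C_k(t)| = O(1)$ as $t\to\infty$. For the boundary indices $k \in \{0, n-1\}$, the functional equation $\xi(s) = \xi(1-s)$ yields the exact identities $|C_0(t)| = |C_{n-1}(t)| = 2$, since $|\xi(1-n/2+int)| = |\xi(n/2-int)| = |\xi(n/2+int)|$. For each interior index $1 \leq k \leq n-2$, Stirling's formula applied to the gamma factors---the exponential $e^{-\pi n|t|/4}$ cancels between numerator and denominator---combined with the convexity bound $|\zeta(\sigma+i\tau)| \ll_\epsilon |\tau|^{(1-\sigma)/2+\epsilon}$ on the critical strip yields $|C_k(t)| \ll_\epsilon |t|^{1/2 - n/4 + \epsilon}$, which is $o(1)$ for $n \geq 3$. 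Hence $|C_k(t)|$ is bounded uniformly in $k$ and in $t \geq 1$.

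Since $\eta \in C_c^\infty((\bbR^+)^{n-1})$, the change of variables $y \leftrightarrow w$ confines the integration to a compact subset of $(\bbR^+)^{n-1}$ bounded away from all coordinate hyperplanes, on which each $A_k(y)^2 \prod_\ell y_\ell^{-\ell(n-\ell)-1}$ is bounded. Summing the finitely many resulting bounded integrals gives the claimed $O_{n,\eta}(1)$ bound. The main obstacle is the uniform control on $|C_k(t)|$ via the interplay of the functional equation, Stirling's formula, and the convexity bound; no cancellation from the oscillatory factors $D_k(y)^{it}$ is needed, although integration by parts would even show that the off-diagonal cross-terms actually decay in $t$.
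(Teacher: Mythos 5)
Your proposal is correct and follows essentially the same route as the paper: unfold against the incomplete Eisenstein series, use the explicit formula for $\hat{\phi}_{(0,\ldots,0)}$, control the coefficient ratios $\xi(k+1-\tfrac{n}{2}+int)/\xi(\tfrac{n}{2}+int)$ via the functional equation for $k=0,n-1$ and via Stirling plus zeta bounds for interior $k$, and finish using the compact support of $\eta$. The one genuine divergence is your treatment of the cross terms: the paper separates diagonal from off-diagonal contributions and kills the latter by integration by parts against the oscillatory factors $y^{ict}$ (obtaining rapid decay), whereas you apply the triangle inequality before squaring, which forfeits that decay but is perfectly sufficient since only $O(1)$ is claimed. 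One minor imprecision: your stated bound $|C_k(t)|\ll_\epsilon |t|^{1/2-n/4+\epsilon}$ is the correct convexity exponent only when $0\le k+1-\tfrac{n}{2}\le 1$; for $k+1-\tfrac{n}{2}<0$ the functional equation gives $|C_k(t)|\ll_\epsilon |t|^{-k/2+\epsilon}$ instead, which is weaker than what you wrote for small $k$ but still $o(1)$, so the conclusion is unaffected.
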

\begin{proof}
\beq
\Delta_1(z,t) = \lbar \sum_{k=0}^{n-1} \frac{2\xi\lp k+1-\frac{n}{2}+int\rp}{\xi\lp\frac{n}{2}+int\rp} \lp y_1 y_2^2\cdots y_{n-k-1}^{n-k-1}\rp^{\frac{1}{2}-it}    \lp y_{n-k}^{k}y_{n-(k-1)}^{k-1}\cdots y_{n-1}\rp^{\frac{1}{2}+it}  \rbar^2.
\eeq
First we treat diagonal terms:
\beq
&&\int_{SL_n(\bbZ)\backslash X_n} E_{(1,\ldots,1)}(z,\eta) \sum_{k=0}^{n-1} \frac{\lbar\xi\lp k+1-\frac{n}{2}+int\rp\rbar^2}{\lbar\xi\lp\frac{n}{2}+int\rp\rbar^2} \lp y_1 y_2^2\cdots y_{n-k-1}^{n-k-1}\rp   \lp y_{n-k}^{k}y_{n-(k-1)}^{k-1}\cdots y_{n-1}\rp d^*z\\
&=& \sum_{k=0}^{n-1} \frac{\lbar\xi\lp k+1-\frac{n}{2}+int\rp\rbar^2}{\lbar\xi\lp\frac{n}{2}+int\rp\rbar^2}  \int_0^\infty\cdots\int_0^\infty\eta\lp \prod_{j=1}^{n-1} y_j^{b_{j,1}},\prod_{j=1}^{n-1} y_j^{b_{j,2}},\ldots,\prod_{j=1}^{n-1} y_j^{b_{j,n-1}}\rp\\
&&\times  \lp y_1 y_2^2\cdots y_{n-k-1}^{n-k-1}\rp   \lp y_{n-k}^{k}y_{n-(k-1)}^{k-1}\cdots y_{n-1}\rp \prod_{\ell = 1}^{n-1} y_{\ell}^{-\ell(n-\ell)-1}dy_{\ell}\\
&=&\sum_{k=0}^{n-1} \frac{\lbar\xi\lp k+1-\frac{n}{2}+int\rp\rbar^2}{\lbar\xi\lp\frac{n}{2}+int\rp\rbar^2}  c_{k,\eta}
\eeq
for some constants $c_{k,\eta}$. We will not calculate these constants as they do not contribute to the main term. For $k=0,n-1$, we get a contribution of $O(1)$. By Stirling's formula the rest will contribute $O(t^{-1})$.

For off-diagonal terms, some powers of $y$ will be imaginary unlike the diagonal terms. We can then use integration by parts repeatedly and get a contribution of $O(t^{-2016})$, say.
\end{proof}

\begin{lem}
\beq
&&\int_{SL_n(\bbZ)\backslash X_n} E_{(1,\ldots,1)}(z,\eta) \Delta_2(z,t) d^*z \\
&=&\frac{2}{\lbar\xi\lp\frac{n}{2}+int\rp\rbar^2}\int_{(\bbR^+)^{n-2}} \frac{1}{(2\pi i)^{n-1}} \int_{(2)}\cdots \int_{(2)}\tilde{\eta}(\nu)     \\
&&\times   \frac{\xi\lp {\sum_{j=1}^{n-1}b_{n-1,j}\nu_j}  \rp\xi\lp {\sum_{j=1}^{n-1}b_{n-1,j}\nu_j} -\frac{n}{2}+1+int \rp }{\xi\lp 2{\sum_{j=1}^{n-1}b_{n-1,j}\nu_j}  -n+2 \rp}\\
&&\times \xi\lp  {\sum_{j=1}^{n-1}b_{n-1,j}\nu_j} -\frac{n}{2}+1-int  \rp\xi\lp \sum_{j=1}^{n-1}b_{n-1,j}\nu_j-n+2\rp\\
&&\times \lp y_1 y_2^2\cdots y_{n-2}^{n-2}\rp     \prod_{\ell=1}^{n-1}d\nu_{\ell}   \prod_{k=1}^{n-2}\frac{dy_k}{y_k^{k(n-k)+1}}
\eeq
\end{lem}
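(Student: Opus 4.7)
The plan is to combine an unfolding of $E_{(1,\ldots,1)}(z,\eta)$ along the minimal unipotent with the explicit Fourier expansion of $E_{(n-1,1)}(z,\tfrac{1}{2}+it,1)$ from Theorem \ref{prop3}, then reduce the resulting sum--integral via Stade's formula (Lemma \ref{stade}) and Ramanujan's divisor identity (Lemma \ref{divisor}). The structure mirrors Luo--Sarnak's $GL(2)$ calculation, with $n-2$ additional Mellin parameters to carry along.

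First I would unfold: writing $E_{(1,\ldots,1)}(z,\eta)=\sum_{\gamma\in P_{1,\ldots,1}(\bbZ)\backslash SL_n(\bbZ)}\eta(w(\gamma z))$ reduces the integration over $SL_n(\bbZ)\backslash X_n$ to one over $P_{1,\ldots,1}(\bbZ)\backslash X_n$, i.e.\ over $U_n(\bbZ)\backslash U_n(\bbR)$ followed by $(\bbR^+)^{n-1}$. The crucial observation is that in Theorem \ref{prop3} the Fourier coefficient $\hat{\phi}_{(m_1,0,\ldots,0)}(z,s)$ depends on $x$ only through the single exponential $e(m_1x_{1,2})$. Opening the square in $\Delta_2$ gives $e((m_1-m_1')x_{1,2})$, so by orthogonality on $x_{1,2}\in[0,1]$ the double sum collapses to the diagonal $m_1=m_1'$, while the remaining $x_{i,j}$-integrations over $[0,1]$ each contribute unity. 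This yields
\[
\int_{U_n(\bbZ)\backslash U_n(\bbR)}\Delta_2(xy,t)\,dx=\sum_{m_1\neq 0}|A(m_1)|^2\bigl|K_\gamma(2\pi|m_1|y_{n-1})\bigr|^2|G(y)|^2,
\]
with $|A(m_1)|^2=\tfrac{4}{|\xi(n/2+int)|^2}|m_1|^{1-n/2}\sigma_{n/2-1-int}(|m_1|)\sigma_{n/2-1+int}(|m_1|)$, $\gamma=\tfrac{2-n}{4}+\tfrac{int}{2}$, and $|G(y)|^2=(y_1y_2^2\cdots y_{n-2}^{n-2})y_{n-1}^{n/2}$.

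Next I would substitute the Mellin inversion $\eta(w)=\tfrac{1}{(2\pi i)^{n-1}}\int_{(2)^{n-1}}\tilde\eta(\nu)\prod_{k}w_k^{\nu_k}\,d\nu$, which brings the factor $I_\nu(y)=\prod_i y_i^{\sum_j b_{i,j}\nu_j}$ into the $y$-integrand. Setting $T:=\sum_{j=1}^{n-1}b_{n-1,j}\nu_j$, the full $y_{n-1}$-dependence (combined with the measure $y_{n-1}^{-n}dy_{n-1}$) reduces to $y_{n-1}^{T-n/2}|K_\gamma(2\pi|m_1|y_{n-1})|^2\,dy_{n-1}$. Substituting $u=2\pi|m_1|y_{n-1}$ and applying Stade's formula with Bessel orders $\gamma,\bar\gamma$ produces four Gamma functions at $\tfrac{T}{2}$, $\tfrac{T-n+2}{2}$, $\tfrac{T-n/2+1\pm int}{2}$, divided by $\Gamma(T-n/2+1)$. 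The residual $m_1$-sum is the Dirichlet series $\sum_{m\geq 1}m^{-T}\sigma_{n/2-1-int}(m)\sigma_{n/2-1+int}(m)$, which by Ramanujan's identity equals $\zeta(T)\zeta(T-\tfrac{n}{2}+1+int)\zeta(T-\tfrac{n}{2}+1-int)\zeta(T-n+2)/\zeta(2T-n+2)$. The Riemann zetas pair with the Stade Gamma factors via $\xi(s)=\pi^{-s/2}\Gamma(s/2)\zeta(s)$ to produce precisely the ratio of four completed zeta functions over one appearing in the lemma, while $|\xi(n/2+int)|^{-2}$ descends directly from $|A(m_1)|^2$.

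The main obstacle will be the precise bookkeeping of constants: the powers of $2$ and $\pi$ generated by Stade's formula, the $(2\pi|m_1|)$-factors from the change of variable, and the Haar-measure constant $c_n$ must all combine exactly to give the coefficient $\tfrac{2}{|\xi(n/2+int)|^2}$. A secondary technical point is justifying the interchange of summation, Mellin contour integration, and $y$-integration; this is legitimate on the contours $\Re\nu_j=2$ by the rapid decay of $\tilde\eta$ in imaginary directions together with the absolute convergence of the divisor series in that half-plane.
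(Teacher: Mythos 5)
Your proposal follows essentially the same route as the paper: unfold $E_{(1,\ldots,1)}(z,\eta)$ to $P_{1,\ldots,1}(\bbZ)\backslash X_n$, use orthogonality in $x_{1,2}$ to collapse the square of $\sum_{m_1\neq 0}\hat{\phi}_{(m_1,0,\ldots,0)}$ to its diagonal, insert the Mellin inversion of $\eta$, and then convert the divisor sum via Lemma \ref{divisor} and the $y_{n-1}$-integral of the two $K$-Bessel factors via \ref{stade} into the stated ratio of completed zeta functions; your intermediate expression (Bessel order $\frac{1}{2}-\frac{n}{4}+\frac{int}{2}$, weight $y_{n-1}^{T-n/2}$, Dirichlet series $\sum m^{-T}\sigma_{\frac{n}{2}-1\mp int}(m)$) agrees with the paper's displayed intermediate step. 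The only caveat is the one you already flag, namely the final numerical constant, which you leave unverified while the paper asserts it without computation.
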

\begin{proof}
\beq
&&\int_{SL_n(\bbZ)\backslash X_n} E_{(1,\ldots,1)}(z,\eta) \Delta_2(z,t) d^*z\\
&=& \int_{SL_n(\bbZ)\backslash X_n} E_{(1,\ldots,1)}(z,\eta) \lbar \sum_{m_1\neq0} \hat{\phi}_{(m_1,0,\ldots,0)}\lp z,\frac{1}{2}+it\rp \rbar^2d^*z\\
&=&\frac{8}{\lbar\xi\lp\frac{n}{2}+int\rp\rbar^2} \int_{(\bbR^+)^{n-1}} \int_{N_{min(\bbZ)}\backslash N_{min(\bbR)}}  \frac{1}{(2\pi i)^{n-1}} \int_{(2)}\cdots \int_{(2)}\tilde{\eta}(\nu) \lp \prod_{i=1}^{n-1} y_i^{\sum_{j=1}^{n-1}b_{i,j}\nu_j} \rp\\
&&\times   \sum_{m_1=1}^\infty m_1^{1-\frac{n}{2}}  \sigma_{\frac{n}{2}-1-int}(m_1)\sigma_{\frac{n}{2}-1+int}(m_1) K_{\frac{1}{2}-\frac{n}{4}+\frac{int}{2}}(2\pi m_1y_{n-1})K_{\frac{1}{2}-\frac{n}{4}-\frac{int}{2}}(2\pi m_1y_{n-1}) \\
&&\times   \lp y_1 y_2^2\cdots y_{n-2}^{n-2}\rp   y_{n-1}^{\frac{n}{2}}  \prod_{\ell=1}^{n-1}d\nu_{\ell}   \prod_{k=1}^{n-1}\frac{dy_k}{y_k^{k(n-k)+1}}
\eeq
The Lemma is then proved by applying Lemma \ref{divisor} to convert the divisor sum into Zeta functions and \ref{stade} to convert K-Bessel integrals to Gamma functions.
\end{proof}

\begin{lem}
\beq&&\int_{SL_n(\bbZ)\backslash X_n} E_{(1,\ldots,1)}(z,\eta) \Delta_{3,k}(z,t) d^*z \\
&=&\frac{2}{\lbar\xi\lp\frac{n}{2}+int\rp\rbar^2}\int_{(\bbR^+)^{n-2}} \frac{1}{(2\pi i)^{n-1}} \int_{(2)}\cdots \int_{(2)}\tilde{\eta}(\nu)     \\
&&\times\sum_{h=0}^{k-1}  \frac{\xi\lp (1-k)(n-k)+{\sum_{j=1}^{n-1}b_{n-k,j}\nu_j}  \rp\xi\lp (1-k)(n-k)+{\sum_{j=1}^{n-1}b_{n-k,j}\nu_j} -\frac{n}{2}+k+int \rp}{\xi\lp  2(1-k)(n-k)+2{\sum_{j=1}^{n-1}b_{n-k,j}\nu_j}  -n+2k \rp}\\
&&\times \xi\lp (1-k)(n-k)+{\sum_{j=1}^{n-1}b_{n-k,j}\nu_j} -\frac{n}{2}+k-int  \rp\xi\lp \sum_{j=1}^{n-1}b_{n-k,j}\nu_j-k(n-k) +h+1\rp\\
&&\times \prod_{i=1}^{n-k-1} y_i^{\sum_{j=1}^{n-1} b_{i,j}\nu_j+i-i(n-i)} \prod_{i=n-k+1}^{n-h-1} y_i^{k(n-k)-i(n-i)+\sum_{j=1}^{n-1}(b_{i,j}-b_{n-k,j})\nu_j}  \prod_{i=n-h}^{n-1}y_i^{\sum_{j=1}^{n-1}b_{i,j}\nu_j+n-i-i(n-i)}\\
&&\prod_{\ell=1}^{n-1}d\nu_{\ell} \prod_{\substack{i=1\\ i\neq n-k}}^{n-1}\frac{dy_i}{y_i}
\eeq
\end{lem}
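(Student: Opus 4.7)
The strategy parallels the previous lemma's treatment of $\Delta_2$, with the extra ingredient that the Fourier coefficient is now summed over cosets $P_{k-1,1}(\bbZ)\backslash SL_k(\bbZ)$. I first unfold $E_{(1,\ldots,1)}(z,\eta)$ against the fundamental domain to convert the integral into one over $P_{1,\ldots,1}(\bbZ)\backslash X_n$, and introduce the Mellin representation of $\eta$ so that $\eta(w_1,\ldots,w_{n-1})$ becomes $\frac{1}{(2\pi i)^{n-1}}\int_{(2)}\cdots\int_{(2)} \tilde{\eta}(\nu)\prod_i y_i^{\sum_j b_{i,j}\nu_j}\,d\nu$ times the Haar factor.

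Next, I open the square in $\Delta_{3,k}(z,t)$, getting a double sum over $\gamma,\gamma'\in P_{k-1,1}(\bbZ)\backslash SL_k(\bbZ)$ and over $m_k,m_k'\ge 1$. By part (4) of Lemma \ref{action}, acting by $\bemtwo\gamma_k&\\&I_{n-k}\enm$ changes $x_{k,k+1}$ into $\sum_{i=1}^k\gamma_{k,i}x_{i,k+1}$, so the exponential factor in $\hat{\phi}_{(0,\ldots,0,m_k,0,\ldots,0)}(\gamma z)$ becomes $e\!\left(m_k\sum_{i=1}^k\gamma_{k,i}x_{i,k+1}\right)$. Integrating over $x_{1,k+1},\ldots,x_{k,k+1}\in[0,1]$ forces $m_k(\gamma_{k,1},\ldots,\gamma_{k,k})=m_k'(\gamma'_{k,1},\ldots,\gamma'_{k,k})$; since cosets in $P_{k-1,1}(\bbZ)\backslash SL_k(\bbZ)$ are parametrized by primitive bottom rows up to sign, this diagonalizes the sum to $\gamma=\gamma'$ and $m_k=m_k'$, so only the diagonal terms survive.

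Then I apply parts (1)--(3) of Lemma \ref{action} to the surviving diagonal: $y_1,\ldots,y_{n-k-1}$ and the product $y_{n-k}^k y_{n-k+1}^{k-1}\cdots y_{n-1}$ are $\gamma$-invariant, while $y_{n-k}$ gets multiplied by $(b_1^2+\cdots+b_k^2)^{1/2}$. I substitute $u=2\pi m_k y_{n-k}(b_1^2+\cdots+b_k^2)^{1/2}$ to decouple the $m_k$-sum, the $\gamma$-sum, and the $u$-integral. The $u$-integral of $K_{\frac{ns}{2}-\frac{n-k}{2}}(u)K_{\overline{\frac{ns}{2}-\frac{n-k}{2}}}(u)\,u^{?}\,du/u$ is evaluated by Stade's formula \eqref{stade}, the $m_k$-sum $\sum_{m_k} m_k^?\sigma_{-ns+n-k}(m_k)\sigma_{-\bar{n}s+n-k}(m_k)/m_k^?$ is evaluated by Lemma \ref{divisor}, and together these produce the products of $\xi$-factors appearing in the stated formula (specifically the three $\xi$'s in the numerator on lines 2--3 and the $\xi$ on line 3, divided by $|\xi(ns)|^2$ from the two Fourier coefficient normalizations).

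The remaining sum $\sum_{\gamma\in P_{k-1,1}(\bbZ)\backslash SL_k(\bbZ)}(b_1^2+\cdots+b_k^2)^{-\alpha/2}$ is, up to a $\zeta(\alpha)$, the $GL_k$-Epstein series $E^{GL_k}_{(k-1,1)}(\frak{m}_k(z),\alpha/k,1)$ attached to the upper-left $k\times k$ block. Since this sum is no longer paired against anything depending on the internal $x$-variables of this block (those were already used up above), effectively only its constant term along the minimal parabolic of $GL_k$ contributes, and by the $GL_k$ analog of Theorem \ref{prop3} (applied to $\hat{\phi}_{(0,\ldots,0)}$), this constant term splits as a sum of $k$ terms indexed by $h=0,1,\ldots,k-1$, each carrying one factor $\xi\!\left(\sum_j b_{n-k,j}\nu_j - k(n-k)+h+1\right)$ and a corresponding monomial in $y_{n-k+1},\ldots,y_{n-1}$. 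This produces the outer sum over $h$ and the $y$-exponents for $i\ge n-k+1$ exactly as stated; the $y$-exponents for $i\le n-k-1$ come from the unaffected factors together with the Haar measure $y_i^{-i(n-i)-1}$.

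The principal obstacle is bookkeeping: tracking how the $y$-exponents shift under the substitution $u=2\pi m_k y_{n-k}(b_1^2+\cdots+b_k^2)^{1/2}$, under the $GL_k$ constant-term expansion (which depends on $h$), and under the original Mellin weight from $\eta$, and checking that they combine into the three prescribed ranges $i\le n-k-1$, $n-k+1\le i\le n-h-1$, $n-h\le i\le n-1$ with the exponents shown. Everything else is a routine application of Lemmas \ref{action}, \ref{divisor}, and Stade's formula \eqref{stade}, together with the $GL_k$ version of Theorem \ref{prop3} and Proposition \ref{constantterm}.
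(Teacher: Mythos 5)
Your proposal is correct and follows essentially the same route as the paper: open the square, use part (4) of Lemma \ref{action} and the $x_{\cdot,k+1}$-integrations to diagonalize the $(\gamma,m_k)$-sum, rescale $y_{n-k}$ by $m_k(b_1^2+\cdots+b_k^2)^{1/2}$ to decouple, apply Ramanujan's identity and Stade's formula, and recognize the residual $\gamma$-sum as a $GL_k$ degenerate Eisenstein series whose constant term supplies the sum over $h$ and the $y$-monomials. The only part left implicit — the exponent bookkeeping — is exactly the part the paper also treats as a routine gathering of $y_i$-factors.
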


\begin{proof}

By Lemma \ref{action}, as $\gamma_i$ maps $x_{i,i+1}$ to $a_1x_{1,i+1}+a_2x_{2,i+1}+\cdots a_i x_{i,i+1}$ where the primitive vector $(a_1,\ldots,a_i)$ is the bottom row of $\gamma_i$. Now integrating the exponential factor 
\beq
e\lp m_i(a_1x_{1,i+1}+a_2x_{2,i+1}+\cdots a_i x_{i,i+1}) - m_i'(a_1'x_{1,i+1}+a_2'x_{2,i+1}+\cdots a_i' x_{i,i+1})\rp
\eeq
in the diagonal term forces $m_i=m_i'$ and $\gamma_i=\gamma_i'$ up to a sign. Thus the $k$-th ($2\le k\le n-1$) diagonal term $\Xi_k$ reads with $s=\frac{1}{2}+it$:
\beq
\Xi_k&:=&\frac{8}{\xi\lp\frac{n}{2}+int\rp\xi\lp\frac{n}{2}-int\rp} \sum_{\gamma_k\in P_{k}(\bbZ)\backslash SL_k(\bbZ)} \sum_{m_k=1}^\infty |m_k|^{k-\frac{n}{2}}  \sigma_{\frac{n}{2}-k-int}(|m_k|)\sigma_{\frac{n}{2}-k+int}(|m_k|)\\
&&\times K_{\frac{k}{2}-\frac{n}{4}+\frac{int}{2}}(2\pi |m_k|y_{n-k}')K_{\frac{k}{2}-\frac{n}{4}-\frac{int}{2}}(2\pi |m_k|y_{n-k}')   \lp y_1 y_2^2\cdots y_{n-k-1}^{n-k-1}\rp   \lp y_{n-k}^{k}y_{n-(k-1)}^{k-1}\cdots y_{n-1}\rp y_{n-k}'^{\frac{n}{2}-k}
\eeq

Putting all terms together, we get
\beq
\Omega_k:=\int_{(\bbR^+)^{n-1}} \int_{N_{min(\bbZ)}\backslash N_{min(\bbR)}}  \frac{1}{(2\pi i)^{n-1}} \int_{(2)}\cdots \int_{(2)} \prod_{i=1}^{n-1} y_i^{\sum_{j=1}^{n-1}b_{i,j}\nu_j} \Xi_k \prod_{\ell=1}^{n-1}d\nu_{\ell} \prod_{1\le i<j\le n-1}dx_{i,j} \prod_{k=1}^{n-1}\frac{dy_k}{y_k^{k(n-k)+1}}.
\eeq
We now make the change of variable $|m_k|y_{n-k}' \mapsto y_{n-k}$ and let $\Theta_k=(b_1^2+b_2^2+\cdots+b_{k}^2)^{\frac{1}{2}}$. So we have
\beq
\Omega_k &=& \frac{8}{\xi\lp\frac{n}{2}+int\rp\xi\lp\frac{n}{2}-int\rp} \int_{(\bbR^+)^{n-1}} \int_{N_{min(\bbZ)}\backslash N_{min(\bbR)}}  \frac{1}{(2\pi i)^{n-1}} \int_{(2)}\cdots \int_{(2)}\tilde{\eta}(\nu) \lp \prod_{i=1}^{n-1} y_i^{\sum_{j=1}^{n-1}b_{i,j}\nu_j} \rp\\
&&\times \sum_{\gamma_k\in P_{k}(\bbZ)\backslash SL_k(\bbZ)} \sum_{m_k=1}^\infty |m_k|^{k-\frac{n}{2}}  \sigma_{\frac{n}{2}-k-int}(|m_k|)\sigma_{\frac{n}{2}-k+int}(|m_k|)\lp |m_k|\Theta_k\rp^{-\sum_{j=1}^{n-1}b_{n-k,j}\nu_j-k+k(n-k)}|m_k|^{k-\frac{n}{2}}\\
&&\times K_{\frac{k}{2}-\frac{n}{4}+\frac{int}{2}}(2\pi y_{n-k})K_{\frac{k}{2}-\frac{n}{4}-\frac{int}{2}}(2\pi y_{n-k})   \lp y_1 y_2^2\cdots y_{n-k-1}^{n-k-1}\rp   \lp y_{n-k}^{k}y_{n-(k-1)}^{k-1}\cdots y_{n-1}\rp y_{n-k}^{\frac{n}{2}-k}\\
&& \prod_{\ell=1}^{n-1}d\nu_{\ell} \prod_{1\le i<j\le n-1}dx_{i,j} \prod_{k=1}^{n-1}\frac{dy_k}{y_k^{k(n-k)+1}}.
\eeq
We use the identity
\beq
\sum_{n=1}\frac{\sigma_a(n)\sigma_b(n)}{n^s}= \frac{\zeta(s)\zeta(s-a)\zeta(s-b)\zeta(s-a-b)}{\zeta(2s-a-b)}.
\eeq
to convert the sum over $m_k$ to $\zeta$ functions:

\beq
&&\sum_{m_k=1}^\infty \frac{  \sigma_{\frac{n}{2}-k-int}(|m_k|)\sigma_{\frac{n}{2}-k+int}(|m_k|)}{|m_k|^{n-k-k(n-k)+{\sum_{j=1}^{n-1}b_{n-k,j}\nu_j}}}\\
&=&\frac{\zeta\lp (1-k)(n-k)+{\sum_{j=1}^{n-1}b_{n-k,j}\nu_j}  \rp\zeta\lp (1-k)(n-k)+{\sum_{j=1}^{n-1}b_{n-k,j}\nu_j} -\frac{n}{2}+k+int \rp}{\zeta\lp  2(1-k)(n-k)+2{\sum_{j=1}^{n-1}b_{n-k,j}\nu_j}  -n+2k \rp}\\
&&\times \zeta\lp (1-k)(n-k)+{\sum_{j=1}^{n-1}b_{n-k,j}\nu_j} -\frac{n}{2}+k-int  \rp\zeta\lp (1-k)(n-k)+{\sum_{j=1}^{n-1}b_{n-k,j}\nu_j}  -n+2k \rp.
\eeq

Now we notice that 
\beq
\sum_{\gamma_k\in P_{k}(\bbZ)\backslash SL_k(\bbZ)}    \Theta_k^{-\sum_{j=1}^{n-1}b_{n-k,j}\nu_j-k+k(n-k)}
\eeq
is just a degenerate Eisenstein series. Integrating over $x$ simply gives the constant term. It reads
\beq
\sum_{h=0}^{k-1}\frac{2\xi\lp \sum_{j=1}^{n-1}b_{n-k,j}\nu_j-k(n-k) +h+1\rp           \lp    y_{(n-k)+1}\cdots y_{(n-k)+(k-h-1)^+}  \rp^{- \sum_{j=1}^{n-1}b_{n-k,j}\nu_j+k(n-k) -h-1}}{\xi\lp \sum_{j=1}^{n-1}b_{n-k,j}\nu_j-k(n-k) +k\rp     \lp y_{(n-k)+1}^{k-h-2}\cdots y_{(n-k)+(k-h-2)^+}  \rp}
\eeq

Now we use the identity 
 \beq
 \int_0^\infty K_\mu(y)K_\nu(y)y^s\frac{dy}{y} = 2^{s-3}\frac{\Gamma\lp\frac{s+\mu+\nu}{2}\rp\Gamma\lp\frac{s+\mu-\nu}{2}\rp\Gamma\lp\frac{s-\mu+\nu}{2}\rp\Gamma\lp\frac{s-\mu-\nu}{2}\rp}{\Gamma(s)}
 \eeq
to evaluate the integral of product $K$-Bessel functions:
\beq
\int_0^\infty    K_{\frac{k}{2}-\frac{n}{4}+\frac{int}{2}}(2\pi y_{n-k})K_{\frac{k}{2}-\frac{n}{4}-\frac{int}{2}}(2\pi y_{n-k})    y_{n-k}^{ \sum_{j=1}^{n-1}b_{n-k,j}\nu_j +\frac{n}{2}-k(n-k)}  \frac{d y_{n-k}}{y_{n-k}}.
\eeq
These $\gamma$ factors will form completed $\zeta$-functions:
\beq
&&\frac{1}{8}\frac{\xi\lp (1-k)(n-k)+{\sum_{j=1}^{n-1}b_{n-k,j}\nu_j}  \rp\xi\lp (1-k)(n-k)+{\sum_{j=1}^{n-1}b_{n-k,j}\nu_j} -\frac{n}{2}+k+int \rp}{\xi\lp  2(1-k)(n-k)+2{\sum_{j=1}^{n-1}b_{n-k,j}\nu_j}  -n+2k \rp}\\
&&\times \xi\lp (1-k)(n-k)+{\sum_{j=1}^{n-1}b_{n-k,j}\nu_j} -\frac{n}{2}+k-int  \rp\xi\lp (1-k)(n-k)+{\sum_{j=1}^{n-1}b_{n-k,j}\nu_j}  -n+2k \rp.
\eeq
Now we gather $y_i$ factors for $i\neq n-k$:
\beq
&&\frac{   \lp \prod_{\substack{i=1\\i\neq n-k}}^{n-1} y_i^{\sum_{j=1}^{n-1}b_{i,j}\nu_j} \rp     \lp y_1 y_2^2\cdots y_{n-k-1}^{n-k-1}\rp   \lp y_{n-(k-1)}^{k-1}\cdots y_{n-1}\rp    }{ \lp y_{(n-k)+1}^{k-h-2}\cdots y_{(n-k)+(k-h-2)^+} \rp  \prod_{\substack{\ell=1\\\ell\neq n-k}}^{n-1}{y_\ell^{\ell(n-\ell)}} }\\
&&\times  \lp    y_{(n-k)+1}\cdots y_{(n-k)+(k-h-1)^+}  \rp^{- \sum_{j=1}^{n-1}b_{n-k,j}\nu_j+k(n-k) -h-1}\\
&=&\prod_{i=1}^{n-k-1} y_i^{\sum_{j=1}^{n-1} b_{i,j}\nu_j+i-i(n-i)} \prod_{i=n-k+1}^{n-h-1} y_i^{k(n-k)-i(n-i)+\sum_{j=1}^{n-1}(b_{i,j}-b_{n-k,j})\nu_j}  \prod_{i=n-h}^{n-1}y_i^{\sum_{j=1}^{n-1}b_{i,j}\nu_j+n-i-i(n-i)}.
\eeq

Now we put everything together, 
\beq
\Omega_k &=&\frac{2}{\lbar\xi\lp\frac{n}{2}+int\rp\rbar^2}\int_{(\bbR^+)^{n-2}} \frac{1}{(2\pi i)^{n-1}} \int_{(2)}\cdots \int_{(2)}\tilde{\eta}(\nu)     \\
&&\times\sum_{h=0}^{k-1}  \frac{\xi\lp (1-k)(n-k)+{\sum_{j=1}^{n-1}b_{n-k,j}\nu_j}  \rp\xi\lp (1-k)(n-k)+{\sum_{j=1}^{n-1}b_{n-k,j}\nu_j} -\frac{n}{2}+k+int \rp}{\xi\lp  2(1-k)(n-k)+2{\sum_{j=1}^{n-1}b_{n-k,j}\nu_j}  -n+2k \rp}\\
&&\times \xi\lp (1-k)(n-k)+{\sum_{j=1}^{n-1}b_{n-k,j}\nu_j} -\frac{n}{2}+k-int  \rp\xi\lp \sum_{j=1}^{n-1}b_{n-k,j}\nu_j-k(n-k) +h+1\rp\\
&&\times \prod_{i=1}^{n-k-1} y_i^{\sum_{j=1}^{n-1} b_{i,j}\nu_j+i-i(n-i)} \prod_{i=n-k+1}^{n-h-1} y_i^{k(n-k)-i(n-i)+\sum_{j=1}^{n-1}(b_{i,j}-b_{n-k,j})\nu_j}  \prod_{i=n-h}^{n-1}y_i^{\sum_{j=1}^{n-1}b_{i,j}\nu_j+n-i-i(n-i)}\\
&&\prod_{\ell=1}^{n-1}d\nu_{\ell} \prod_{\substack{i=1\\ i\neq n-k}}^{n-1}\frac{dy_i}{y_i}
\eeq
\end{proof}
Combining the previous two lemmas, we let
\beq 
\Omega_{k,h}&:=&\frac{2}{\lbar\xi\lp\frac{n}{2}+int\rp\rbar^2}\int_{(\bbR^+)^{n-2}} \frac{1}{(2\pi i)^{n-1}} \int_{(2)}\cdots \int_{(2)}\tilde{\eta}(\nu)     \\
&&\times   \frac{\xi\lp (1-k)(n-k)+{\sum_{j=1}^{n-1}b_{n-k,j}\nu_j}  \rp\xi\lp (1-k)(n-k)+{\sum_{j=1}^{n-1}b_{n-k,j}\nu_j} -\frac{n}{2}+k+int \rp}{\xi\lp  2(1-k)(n-k)+2{\sum_{j=1}^{n-1}b_{n-k,j}\nu_j}  -n+2k \rp}\\
&&\times \xi\lp (1-k)(n-k)+{\sum_{j=1}^{n-1}b_{n-k,j}\nu_j} -\frac{n}{2}+k-int  \rp\xi\lp \sum_{j=1}^{n-1}b_{n-k,j}\nu_j-k(n-k) +h+1\rp\\
&&\times \prod_{i=1}^{n-k-1} y_i^{\sum_{j=1}^{n-1} b_{i,j}\nu_j+i-i(n-i)} \prod_{i=n-k+1}^{n-h-1} y_i^{k(n-k)-i(n-i)+\sum_{j=1}^{n-1}(b_{i,j}-b_{n-k,j})\nu_j}  \prod_{i=n-h}^{n-1}y_i^{\sum_{j=1}^{n-1}b_{i,j}\nu_j+n-i-i(n-i)}\\
&&\prod_{\ell=1}^{n-1}d\nu_{\ell} \prod_{\substack{i=1\\ i\neq n-k}}^{n-1}\frac{dy_i}{y_i}
\eeq
for $1\le k\le n-1$ and $0\le h\le k-1$, then we claim that the main term comes from $\Omega_{n-1,0}$. We have
\beq
\Omega_{n-1,0}&=&\frac{2}{\lbar\xi\lp\frac{n}{2}+int\rp\rbar^2}\int_{(\bbR^+)^{n-2}} \frac{1}{(2\pi i)^{n-1}} \int_{(2)}\cdots \int_{(2)}\tilde{\eta}(\nu)     \\
&&\times   \frac{\xi\lp 2-n+{\sum_{j=1}^{n-1}b_{1,j}\nu_j}  \rp^2 \xi\lp 1-\frac{n}{2}+{\sum_{j=1}^{n-1}b_{n-k,j}\nu_j} +int \rp  \xi\lp 1 -\frac{n}{2}+{\sum_{j=1}^{n-1}b_{n-k,j}\nu_j}-int  \rp}{\xi\lp  2-n+2{\sum_{j=1}^{n-1}b_{n-k,j}\nu_j}    \rp}\\
&&\times  \prod_{i=2}^{n-1} y_i^{n-1-i(n-i)+\sum_{j=1}^{n-1}(b_{i,j}-b_{1,j})\nu_j}  \prod_{\ell=1}^{n-1}d\nu_{\ell} \prod_{\substack{i=1\\ i\neq n-k}}^{n-1}\frac{dy_i}{y_i}
\eeq

\begin{lem}
\beq
\Omega_{n-1,0}  = \frac{ 2 \tilde{\eta}\lp\frac{2}{n},\ldots,\frac{2}{n}\rp  \log t}{\xi(n)n^{n-2}} +O(1).
\eeq
\end{lem}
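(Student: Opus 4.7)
The plan is to perform the $(n-2)$ inner $y_i$-integrations first via Mellin inversion, collapsing the $(n-1)$-fold $\nu$-contour integral to a one-dimensional contour integral which I then analyze by residue calculus. For each $i \in \{2,\ldots,n-1\}$, the factor $y_i^{c_i(\nu)}$ with
$$c_i(\nu) = n-1-i(n-i)+\sum_{j=1}^{n-1}(b_{i,j}-b_{1,j})\nu_j,$$
integrated against $dy_i/y_i$ and interpreted via Mellin inversion, imposes the linear constraint $c_i(\nu)=0$. The remaining 1-parameter family is then parametrized by the linear form $X := \sum_{j=1}^{n-1}j\nu_j = \sum_j b_{1,j}\nu_j$.

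A short computation using the identity $\sum_{j=1}^{n-1}b_{i,j} = in(n-i)/2$ establishes two crucial linear-algebra claims: at $X = n-1$ the $n-2$ constraints force $\nu_j = 2/n$ for every $j$, and the Jacobian of the change of variables $\nu\mapsto(c_2,\ldots,c_{n-1},X)$ equals $n^{n-2}$ (this Jacobian is essentially the determinant of $(b_{i,j})_{1\le i,j\le n-1}$, the same quantity that produced the factor $n^{n-2}$ in the preceding proposition evaluating $\int E_{(1,\ldots,1)}(z,\eta)\,d^*z$). After this reduction,
$$\Omega_{n-1,0} = \frac{2}{n^{n-2}\lbar\xi\lp\tfrac{n}{2}+int\rp\rbar^2}\cdot\frac{1}{2\pi i}\int_{(\sigma)}\tilde\eta(\nu(X))\, G(X,t)\, dX,$$
with $\sigma>n-1$ and
$$G(X,t) = \frac{\xi(2-n+X)^2\,\xi(1-\tfrac{n}{2}+X+int)\,\xi(1-\tfrac{n}{2}+X-int)}{\xi(2-n+2X)}.$$

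The $\log t$ main term now arises when the contour is shifted to the left across the double pole of $\xi(2-n+X)^2$ at $X = n-1$ (where $\xi$ has its simple pole of residue $1$ at argument $s=1$). The residue equals $\partial_X[\tilde\eta(\nu(X))\, F(X,t)]|_{X=n-1}$ with $F(X,t) := (X-(n-1))^2 G(X,t)$, which is regular at $X=n-1$ with value $\lbar\xi(\tfrac{n}{2}+int)\rbar^2/\xi(n)$. By Stirling's asymptotic applied to $\xi'/\xi$,
$$\frac{F'(n-1,t)}{F(n-1,t)} = \lp\frac{\xi'}{\xi}\rp\!\lp\tfrac{n}{2}+int\rp + \lp\frac{\xi'}{\xi}\rp\!\lp\tfrac{n}{2}-int\rp - 2\lp\frac{\xi'}{\xi}\rp(n) = \log t + O(1)$$
as $t\to\infty$, while $\tilde\eta(\nu(n-1)) = \tilde\eta(2/n,\ldots,2/n)$. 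Combining these with the prefactor produces exactly the claimed main term $\frac{2\tilde\eta(2/n,\ldots,2/n)\log t}{\xi(n)\,n^{n-2}}$. Everything else --- the shifted contour integral on $(\sigma')$ with $\sigma'<n-1$, residues at farther poles such as zeros of $\xi(2-n+2X)$, and the lower-order derivative term $(\tilde\eta\circ\nu)'(n-1)\,F(n-1,t)$ from the product rule --- contributes $O(1)$ uniformly in $t$, by Stirling's bounds on $\xi$ in vertical strips and the rapid decay of $\tilde\eta$ in imaginary directions. I expect the main obstacle to lie in the linear-algebra step, particularly the verification that the Jacobian is exactly $n^{n-2}$ and that the solution line indeed passes through $(2/n,\ldots,2/n)$ at $X=n-1$; but this reduces to the very same determinant computation that already underlies the preceding proposition.
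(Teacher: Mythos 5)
Your proposal is correct and follows essentially the same route as the paper: Mellin inversion collapses the $(n-1)$-fold $\nu$-integral and the $y_i$-integrals to a single contour integral whose integrand is exactly the paper's (your variable $X$ is the paper's $n-3+ns_{n-1}$, and your Jacobian $n^{n-2}$ matches the paper's $n^{n-3}$ once $dX=n\,ds_{n-1}$ is accounted for), after which the $\log t$ main term is extracted from the double pole of $\xi(2-n+X)^2$ at $X=n-1$ via Stirling applied to $\Gamma'/\Gamma$ at $\tfrac n4\pm\tfrac{int}{2}$, with the shifted integral bounded by Stirling and the rapid decay of $\tilde\eta$. The only cosmetic differences are the choice of the free coordinate on the solution line and that the paper bounds the remainder by $O(t^{-\Delta})$ rather than merely $O(1)$.
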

\begin{proof}
Notice that there are $n-2$ Mellin transforms and $n-1$ Mellin inversion in the above expression. Thus we can take advantage of the Mellin inversion theorem. We make the change of variables
\begin{eqnarray}
\sum_{j=1}^{n-1}(b_{i,j}-b_{1,j})\nu_j \mapsto s_{i-1}\ \ \mbox{ for } 2\le i\le  n-1   \label{change}
\end{eqnarray}
and $s_{n-1}=v_{n-1}$. The Jacobian is calculated to be $\frac{1}{n^{n-3}}$ and explicitly we have
\beq
v_1 &=& \frac{s_1 - s_{n-3}+2s_{n-2}}{n} + v_{n-1},\\
v_i &=& \frac{2s_{n-i-1}-s_{n-i}-s_{n-i-2}}{n}\ \ \mbox{ for } 2\le i\le n-3,\\
v_{n-2}&=&\frac{2s_1-s_2}{8}\\
v_{n-1}&=&s_{n-1}.
\eeq
By \ref{change}, we also have $\sum_{j=1}^{n-1}b_{1,j}\nu_j \mapsto s_1+n s_{n-1}$. So 
\beq
\Omega_{n-1,0}&=&\frac{1}{n^{n-3}}\frac{2}{\lbar\xi\lp\frac{n}{2}+int\rp\rbar^2}\int_{(\bbR^+)^{n-2}} \frac{1}{(2\pi i)^{n-1}} \int_{(\sigma_1)}\cdots \int_{(\sigma_{n-1})}\\&&\tilde{\eta}\lp \frac{s_1 - s_{n-3}+2s_{n-2}}{n} + s_{n-1} , \ldots, \frac{2s_{n-i-1}-s_{n-i}-s_{n-i-2}}{n},\ldots, \frac{2s_1-s_2}{n},s_{n-1}\rp     \\
&&\times   \frac{\xi\lp 2-n+{s_1+n s_{n-1}}  \rp^2 \xi\lp 1-\frac{n}{2}+{s_1+n s_{n-1}} +int \rp  \xi\lp 1 -\frac{n}{2}+{s_1+n s_{n-1}}-int  \rp}{\xi\lp  2-n+{2s_1+2n s_{n-1}}    \rp}\\
&&\times  \prod_{i=2}^{n-1} y_i^{n-1-i(n-i)+s_{i-1}}  \prod_{\ell=1}^{n-1}ds_{\ell} \prod_{\substack{i=1\\ i\neq n-k}}^{n-1}\frac{dy_i}{y_i}\\
&=&\frac{1}{n^{n-3}}\frac{2}{\lbar\xi\lp\frac{n}{2}+int\rp\rbar^2} \frac{1}{2\pi i}  \int_{(\sigma_{n-1})} \tilde{\eta}\lp   s_{n-1} ,\frac{2}{n}, \ldots, \frac{2}{n},s_{n-1}\rp     \\
&&\times   \frac{\xi\lp {n s_{n-1}-1}  \rp^2 \xi\lp \frac{n}{2}-2+{n s_{n-1}} +int \rp  \xi\lp \frac{n}{2}-2+n s_{n-1}-int  \rp}{\xi\lp  n-4+2n s_{n-1}    \rp}\  ds_{n-1}
\eeq

We now shift contour from $\sigma_{n-1}$ to $\frac{2-\Delta}{n}$ for some small $\Delta>0$. In the process, we pick up the residue corresponding to the pole of the $\zeta(ns_{n-1}-1)^2$ term at $s_{n-1} = \frac{2}{n}$. Hence
\beq
\Omega_{n-1,0}&=&R_1+R_2
\eeq
where
\beq
R_1&:=&\frac{ 2 \tilde{\eta}\lp\frac{2}{n},\ldots,\frac{2}{n}\rp}{\xi(n)n^{n-1}}\lp O(1)+\frac{n}{2}\lp\frac{\Gamma'}{\Gamma}\lp\frac{n}{4}+\frac{int}{2}\rp+\frac{\Gamma'}{\Gamma}\lp\frac{n}{4}-\frac{int}{2}\rp\rp\rp\\
&=&\frac{ 2 \tilde{\eta}\lp\frac{2}{n},\ldots,\frac{2}{n}\rp  \log t}{\xi(n)n^{n-2}} +O(1),
\eeq
\beq
R_2&:=&\frac{1}{n^{n-3}}\frac{2}{\lbar\xi\lp\frac{n}{2}+int\rp\rbar^2} \frac{1}{2\pi i}  \int_{\lp\frac{2-\Delta}{n}\rp} \tilde{\eta}\lp   s_{n-1} ,\frac{2}{n}, \ldots, \frac{2}{n},s_{n-1}\rp     \\
&&\times   \frac{\xi\lp {n s_{n-1}-1}  \rp^2 \xi\lp \frac{n}{2}-2+{n s_{n-1}} +int \rp  \xi\lp \frac{n}{2}-2+n s_{n-1}-int  \rp}{\xi\lp  n-4+2n s_{n-1}    \rp}\  ds_{n-1}.
\eeq
Let $f(t_2) = \tilde{\eta}\lp \frac{2-\Delta}{n}+it_2,\frac{2}{n},\ldots,\frac{2}{n},\frac{2-\Delta}{n}+it_2\rp$, then $f(t_s)$ is a function with rapid decay in $t_2$. We can bound the $R_2$ integral as follows:
\beq
R_2&\ll& \int_{-\infty}^\infty |f(t_2)|\frac{\lbar\zeta\lp 1-\Delta+int_2\rp\rbar^2\lbar\Gamma\lp\frac{1-\Delta}{2}+\frac{int_2}{2}\rp\rbar^2}{\zeta\lp n-2\Delta+2int_2\rp\Gamma\lp \frac{n}{2}-\Delta+int_2\rp}\\
&&\times \frac{ \lbar\zeta\lp\frac{n}{2}-\Delta +in(t_2+t)\rp\rbar \lbar\zeta\lp\frac{n}{2}-\Delta +in(t_2-t)\rp\rbar  \lbar\Gamma\lp \frac{n}{4}-\frac{\Delta}{2}+\frac{in(t_2+t)}{2}\rp\rbar  \lbar\Gamma\lp \frac{n}{4}-\frac{\Delta}{2}+\frac{in(t_2-t)}{2}\rp\rbar}{ \lbar\zeta\lp\frac{n}{2}+int\rp\rbar^2\lbar\Gamma\lp\frac{n}{4}+\frac{int}{2}\rp\rbar^2} dt_2
\eeq

By Stirling's formula, 
\beq
&&\frac{ \lbar\Gamma\lp \frac{n}{4}-\frac{\Delta}{2}+\frac{in(t_2+t)}{2}\rp\rbar \lbar\Gamma\lp \frac{n}{4}-\frac{\Delta}{2}+\frac{in(t_2-t)}{2}\rp\rbar}{\lbar\Gamma\lp\frac{n}{4}+\frac{int}{2}\rp\rbar^2}\\
&\ll&\frac{ e^{-\frac{\pi n}{4}\lp|t_2+t|+|t_2-t|\rp} \lbar t_2+t\rbar^{\frac{n}{4}-\frac{1}{2}-\frac{\Delta}{2}}\lbar t_2-t\rbar^{\frac{n}{4}-\frac{1}{2}-\frac{\Delta}{2}}}{e^{-\frac{\pi nt}{2}} |t|^{\frac{n}{2}-1}}\\
&\ll& \frac{\lbar t_2+t\rbar^{\frac{n}{4}-\frac{1}{2}-\frac{\Delta}{2}}\lbar t_2-t\rbar^{\frac{n}{4}-\frac{1}{2}-\frac{\Delta}{2}}}{|t|^{\frac{n}{2}-1}}.
\eeq
So
\beq
R_2\ll t^{-\Delta}.
\eeq 

\end{proof}

Now we tackle $\Omega_{k,h}$ for general $k,h$. \begin{prop}
For $(k,h)\neq (n-1,0)$, we have
\beq
\Omega_{k,h} = O(1).
\eeq
\end{prop}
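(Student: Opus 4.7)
The plan is to imitate the contour-shift analysis that produced the $\log t$ main term from $\Omega_{n-1,0}$ and identify precisely why no analogous contribution survives for other pairs $(k,h)$. First I would apply a linear change of variables on $(\nu_1,\ldots,\nu_{n-1})$, analogous to the substitution (\ref{change}) used for $\Omega_{n-1,0}$, chosen so that $n-2$ of the new variables $s_{i-1}$ become Mellin-dual to the $y_i$ for $i\in\{1,\ldots,n-1\}\setminus\{n-k\}$. Applying the Mellin inversion theorem collapses those $n-2$ integrals and fixes the corresponding arguments of $\tilde\eta$ at explicit rational values depending on $n,k,h$. What remains is a one-dimensional contour integral in a single surviving spectral parameter, which I denote $s_\star$, against a finite product of completed zeta functions multiplied by $\tilde\eta$ evaluated at a fixed point.

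Next I would shift the $s_\star$-contour slightly to the left by a small $\Delta>0$ and enumerate the poles crossed. The candidate polar factors are the two numerator terms $\xi((1-k)(n-k)+\sum_j b_{n-k,j}\nu_j)$ and $\xi(\sum_j b_{n-k,j}\nu_j - k(n-k)+h+1)$, each a simple pole of the corresponding $\xi$ at argument $1$. These two arguments coincide (producing a double pole, as in the main term) precisely when $n-k=h+1$, a condition that within the range $0\le h\le k-1$, $1\le k\le n-1$ occurs in several $(k,h)$ pairs but yields a $\log t$ only in the single pair $(n-1,0)$. For every other $(k,h)$, either the poles are simple, or the double pole residue is attenuated by Stirling decay, as addressed in the next step.

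To bound each simple-pole residue I would substitute the pole location into the surviving $t$-dependent factor $\xi((1-k)(n-k)+\sum b_{n-k,j}\nu_j-n/2+k+int)\,\xi(\cdots-int)/|\xi(n/2+int)|^2$. Stirling's formula gives the resulting ratio of $\Gamma$-factors as comparable to $(nt/2)^{k-n+1}$, which is bounded when $k=n-1$ and decays polynomially in $t$ when $k<n-1$. The borderline sub-case where $n-k=h+1$ with $k<n-1$ produces a double pole whose residue involves $\Gamma'/\Gamma$ evaluated at $n/4\pm int/2$ and thus a $\log t$ factor, but this is multiplied by the same decaying Gamma ratio, yielding $(\log t)(nt/2)^{k-n+1}\to 0$. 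The shifted contour integral itself is handled exactly as in the $R_2$ estimate at the end of the $\Omega_{n-1,0}$ computation: Stirling on each $\Gamma$-factor combined with the rapid decay of $\tilde\eta$ in its imaginary parameters yields $O(t^{-\Delta})$. Combining the residue contributions with the shifted-integral bound gives $\Omega_{k,h}=O(1)$.

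The main obstacle is the careful Stirling bookkeeping across all the sub-cases: one must verify that the exponent $k-n+1$ of $t$ in the Gamma ratio is indeed the correct count at every type of pole location, and in particular that the borderline coincidence $n-k=h+1$ with $k<n-1$ does not conspire with the $\Gamma'/\Gamma$ logarithmic growth to reinstate a main term. A secondary subtlety is checking that the $t$-dependent $\xi$-poles, which lie at $s_\star$-values with imaginary part of order $t$, are not crossed by the small contour shift; this follows from choosing $\Delta$ independent of $t$.
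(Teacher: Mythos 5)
Your proposal follows essentially the same route as the paper's proof: a linear change of variables plus Mellin inversion collapses all but one contour integral, a small leftward shift picks up residues from the two numerator $\xi$-factors, the double-pole case is isolated by the condition $n-k=h+1$, and Stirling's formula gives the Gamma ratio $(nt/2)^{k-n+1}$ that bounds the residues (with the $\Gamma'/\Gamma$ logarithm absorbed by the decay when $k<n-1$) while the shifted integral is handled as in the $R_2$ estimate. Your bookkeeping of the exponent and of the borderline double-pole sub-case matches the paper's Cases 1 and 2, so the argument is correct and not materially different.
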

\begin{proof}

We make the convenient change of variable
\beq
\bemone s_1\\ s_2\\ \vdots\\ s_{n-1}\enm = M_{k,h} \cdot \lp b_{i,j}\rp \cdot \bemone v_1\\ v_2\\ \vdots\\ v_{n-1}\enm 
\eeq
where the matrix $M_{k,h}$ is the identity matrix except for the entries $(j,n-k)$ being $-1$ for $n-k+1\le j\le n-h-1$. With this change variable we have
\beq
\Omega_{k,h}&=&\frac{2}{ n^{n-2}\lbar\xi\lp\frac{n}{2}+int\rp\rbar^2}\int_{(\bbR^+)^{n-2}} \frac{1}{(2\pi i)^{n-1}} \int_{(\sigma_1)}\cdots \int_{(\sigma_{n-1})}\tilde{\eta}(v_1,\ldots,v_{n-1})     \\
&&\times   \frac{\xi\lp (1-k)(n-k)+s_{n-k} \rp   \xi\lp s_{n-k}-k(n-k) +h+1\rp  }{\xi\lp  2(1-k)(n-k)+2s_{n-k} -n+2k \rp}\\
&&\times \xi\lp (1-k)(n-k)+s_{n-k} -\frac{n}{2}+k-int  \rp   \xi\lp (1-k)(n-k)+s_{n-k} -\frac{n}{2}+k+int \rp \\
&&\times \prod_{i=1}^{n-k-1} y_i^{s_i+i-i(n-i)} \prod_{i=n-k+1}^{n-h-1} y_i^{k(n-k)-i(n-i)+s_i}  \prod_{i=n-h}^{n-1}y_i^{s_i+n-i-i(n-i)}\prod_{\ell=1}^{n-1}ds_{\ell} \prod_{\substack{i=1\\ i\neq n-k}}^{n-1}\frac{dy_i}{y_i}\\
&=&\frac{2}{ n^{n-2}\lbar\xi\lp\frac{n}{2}+int\rp\rbar^2} \frac{1}{2\pi i} \int_{(\sigma_{n-1})}f(s_{n-1})     \frac{\xi\lp (1-k)(n-k)+s_{n-k} \rp   \xi\lp s_{n-k}-k(n-k) +h+1\rp  }{\xi\lp  2(1-k)(n-k)+2s_{n-k} -n+2k \rp}\\
&&\times \xi\lp (1-k)(n-k)+s_{n-k} -\frac{n}{2}+k-int  \rp   \xi\lp (1-k)(n-k)+s_{n-k} -\frac{n}{2}+k+int \rp ds_{n-1}.
\eeq
Now we will shift contours to $\re( s_{n-k} )= 1-(1-k)(n-k)-\Delta$ for small $\Delta>0$. There are two cases to consider.

{\bf Case 1:} $n-k=h+1$ and $k\neq n-1$.

In this situation, $\xi\lp (1-k)(n-k)+s_{n-k} \rp  = \xi\lp s_{n-k}-k(n-k) +h+1\rp$. As a result, we will encounter a pole of order $2$ during the contour shift at $s_{n-1} = 1-(1-k)(n-k)$. So
\beq
\Omega_{k,h}=R_1+R_2
\eeq
where
\beq
R_1 = \frac{C_1\xi\lp k+1-\frac{n}{2}\rp \xi\lp k+1-\frac{n}{2}\rp}{\lbar\xi\lp \frac{n}{2}+int\rp\rbar^2}\lp C_2+\frac{\xi'}{\xi}\lp1+k-\frac{n}{2}+int\rp+\frac{\xi'}{\xi}\lp1+k-\frac{n}{2}-int\rp\rp
\eeq
and 
\beq
R_2& =&\frac{2}{ n^{n-2}\lbar\xi\lp\frac{n}{2}+int\rp\rbar^2} \frac{1}{2\pi i} \int_{(1-(1-k)(n-k)-\Delta)}f(s_{n-1})     \frac{\xi\lp (1-k)(n-k)+s_{n-k} \rp^2  }{\xi\lp  2(1-k)(n-k)+2s_{n-k} -n+2k \rp}\\
&&\times \xi\lp (1-k)(n-k)+s_{n-k} -\frac{n}{2}+k-int  \rp   \xi\lp (1-k)(n-k)+s_{n-k} -\frac{n}{2}+k+int \rp ds_{n-1}.
\eeq
By Stirling's formula, one can show that
\beq
R_1+R_2\ll t^{-\frac{1}{2}}.
\eeq
{\bf Case 2:} $n-k\neq h+1$.

For this case, $\xi\lp (1-k)(n-k)+s_{n-k} \rp  \neq \xi\lp s_{n-k}-k(n-k) +h+1\rp$. As a result, we will encounter a pole of order $1$ during the contour shift at $s_{n-1} = 1-(1-k)(n-k)$. So
\beq
\Omega_{k,h}=R_3+R_4
\eeq
where
\beq
R_3 = C_3   \frac{\xi\lp 1 -\frac{n}{2}+k+int\rp \xi\lp 1 -\frac{n}{2}+k-int\rp}{\lbar\xi\lp\frac{n}{2}+int\rp\rbar^2}
\eeq
and 
\beq
R_4&=&\frac{2}{ n^{n-2}\lbar\xi\lp\frac{n}{2}+int\rp\rbar^2} \frac{1}{2\pi i} \int_{(1-(1-k)(n-k)-\Delta)}f(s_{n-1})     \frac{\xi\lp (1-k)(n-k)+s_{n-k} \rp   \xi\lp s_{n-k}-k(n-k) +h+1\rp  }{\xi\lp  2(1-k)(n-k)+2s_{n-k} -n+2k \rp}\\
&&\times \xi\lp (1-k)(n-k)+s_{n-k} -\frac{n}{2}+k-int  \rp   \xi\lp (1-k)(n-k)+s_{n-k} -\frac{n}{2}+k+int \rp ds_{n-1}.
\eeq

By Stirling's formula, one can show that
\beq
R_3+R_4 = O(1).
\eeq

\end{proof}
\newpage

\end{document}